\newtheorem{thm}{Theorem}[section]
\newtheorem{pro}[thm]{Proposition}
\newtheorem{defi}[thm]{Definition}
\newtheorem{lem}[thm]{Lemma}
\newtheorem{core}[thm]{Corollary}
\newtheorem{example}{Example}[section]%
\def \leq {\leqslant}
\def \geq {\geqslant}
\def\div{\,\,\big|\,\,}
\def\Cay{{\sf Cay}}
\let\oldproofname=\proofname
\renewcommand{\proofname}{\rm\bf{\oldproofname}}
\begin{document}
	
	\title{On $2$-integral Cayley graphs}
	\author{Alireza Abdollahi $^{a}$\footnote{E-mail:
			a.abdollahi@math.ui.ac.ir }, Majid Arezoomand $^{b,c}$\footnote{Email: arezoomand@lar.ac.ir (corresponding author)}, Tao Feng $^{d}$\footnote{Email: tfeng@bjtu.edu.cn} and Shixin Wang $^{d,e}$\footnote{Email: sxwang@bjtu.edu.cn}\\
		{\small\em$^a$ Department of Pure Mathematics, Faculty of Mathematics and Statistics,}\\ 
		{\small\em University of Isfahan, Isfahan 81746-73441, Iran}\\
		{\small\em$^b$ University of Larestan, Larestan 74317-16137, Iran}\\
		{\small\em$^c$ Department of Mathematics, Faculty of Science, Shahid Rajaee Teacher Training
			University, Tehran, 16785-163, I. R. Iran}\\
		{\small\em$^d$School of Mathematics and Statistics, Beijing Jiaotong University, Beijing 100044, P. R. China}\\
		{\small\em$^e$ UP FAMNIT, University of Primorska, Koper 6000, Slovenia}\\
	}
	\date{}
	\maketitle
	
	\begin{abstract} In this paper, we introduce the concept of $k$-integral graphs. A graph $\Gamma$ is called $k$-integral if the extension degree of the splitting field of the characteristic polynomial of $\Gamma$ over rational field $\mathbb Q$ is equal to $k$. We prove that  the set of all finite connected graphs with given algebraic degree and maximum degree is finite. $1$-integral graphs are just integral ones, graphs all of whose eigenvalues are integer. We study
		$2$-integral Cayley graphs over finite groups $G$ with respect to  Cayley sets which are a union of conjugacy classes of $G$. Among other general results, we completely characterize all finite abelian groups having a connected $2$-integral Cayley graph with valency $2,3,4$ and $5$.
		Furthermore, we classify finite groups $G$ for which all Cayley graphs over $G$ with bounded valency are $2$-integral.	
	\end{abstract}	
	\noindent {\bf Keywords}: Cayley graph, algebraic degree, characters of groups, integral eigenvalue.
	
	\noindent {\bf Mathematics Subject Classifications}: 05C25, 05C50.

	\section{Introduction and results}
	
	A graph is called {\it integral} if the eigenvalues of its adjacency matrix are all integers. 
	This concept was first introduced by Harary and Schwenk \cite{Which Graphs Have Integral Spectra} and they raised the question: which graphs are integral?
	After this, classification and construction of the integral graph have become an active topic. 
	A general approach to studying integral graphs is to focus on some special classes of graphs, such as Cayley graphs.
	A \textit{Cayley graph} over a group $G$ with respect to a subset $S$ of $G$, denoted by $\mathrm{Cay}(G,S)$, is a (di)graph with vertex set $G$ such that $(g,h)$ is an arc if and only if $hg^{-1}\in S$. If $S=\varnothing$ then $\mathrm{Cay}(G,S)$ is the empty graph, and if $1_G\in S$ then the corresponding Cayley graph has a loop at each vertex. 
	 Although in this paper the connection set $S$ is not necessarily inverse-closed, if $S$ is inverse-closed, i.e, $S=S^{-1}=\{s^{-1}\mid s\in S\}$,  then $\mathrm{Cay}(G,S)$ is an undirected graph. Furthermore, $\mathrm{Cay}(G,S)$ is connected  if and only if $G=\langle S\rangle$. 
	The Cayley graph over a cyclic group is also called {\it circulant}.
	A subset $S$ of a group $G$ is called \textit{normal} if for any $g\in G$, we have $g^{-1}Sg=S$. Clearly, a subset of any abelian group is a normal set. 
	The Cayley graph over a group $G$ with respect to a normal subset $S$ is  called a {\it normal Cayley graph}.
	A large number of results on the eigenvalues of Cayley graphs have been produced over the past more than four decades; for a survey on this topic see \cite{LZ}.
	All graphs in this paper are loop-free, non-empty, and are not necessarily undirected, but whenever a graph is, we will mention it.
	
	M\"{o}nius, Steuding and Stumpf \cite{Which graphs have non-integral spectra} introduced the concept  of the splitting fileds and the algebraic degrees of graphs to investigate which graph properties prevent integral eigenvalues.
	For a graph $\Gamma$, its {\it splitting field} $\mathbb{SF}(\Gamma)$ is the smallest field extension of the rational number field $\mathbb{Q}$ which contains all the eigenvalues of the adjacency matrix of $\Gamma$.
	The extension degree $[\mathbb{SF}(\Gamma):\mathbb{Q}]$ is called the {\it algebraic degree} of $\Gamma$, denoted by $\mathrm{deg}(\Gamma)$.
	Since eigenvalues of graphs are all algebraic integers, a graph $\Gamma$ is integral if and only if $\mathrm{deg}(\Gamma)=1$.
	Some works have been done, very recently, on determining algebraic degrees of Cayley graphs and their generalization \cite{Splitting fields of mixed cayley graphs over abelian groups,LM,Algebraic degree of spectra of Cayley hypergraphs,SMT-2, WGYF,WYF}.
	Note that the question raised by Harary and Schwenk can be translated as: which graphs have algebraic degree 1? 
	It is natural to ask which graphs have algebraic degree $k$ for a given positive integer $k$?
	To study this question, we define $k$-integral graphs as follows.
	
	\medskip
	\noindent \textbf{Definition.} For a given  positive integer $k$, a  graph $\Gamma$ is called {\it $k$-integral} if $\mathrm{deg}(\Gamma)=k$.
	\medskip
	
	To study $k$-integral graphs, it is crucial to know that the number of such connected graphs are finite.	
	In Section 3, we will give a positive answer to this question.
	We prove that for any positive integers $k$ and $\Delta$, the set of all finite connected graphs with algebraic degree at most $k$ and maximum degree at most $\Delta$ is finite (see Theorem \ref{finite}).
	Our result further improves \cite[Theorem 2]{Cvet} which states that all regular, connected, integral graphs of a fixed degree are finite.

	Let $G=\langle a\rangle\cong\mathbb Z_n$ be a cyclic group of order $n$ and $S$ be an inverse-closed subset of $G$. 
	%	For any divisor $d\neq n$ of $n$,  $[a^d]=\{a^k\mid (k,n)=d\}$.
	In 2005, W. So proved that a loop-free graph $\mathrm{Cay}(G,S)$ is integral if and only if $S$ is a union of $G_n(d)$s, where $d\neq n$ is a divisor of $n$ and $G_n(d)=\{a^k\mid (k,n)=d\}$ \cite[Theorem 7.1]{So}.
	Since then some authors tried to generalize So's result to finite abelian groups and non-abelian groups \cite{Alprin, AP,KS}.
	An interesting question is how to construct $k$-integral Cayley graphs over cyclic groups.
	For $G=\langle a\rangle\cong\mathbb Z_n$ and any divisor $d\neq n$ of $n$, if we put $[a^d]=\{a^k\mid (k,n)=d\}$, then by So's result, $\mathrm{Cay}(G,[a^d])$ is integral. 
	In Section 4, we show that using some special subsets of $[a^d]$, one can construct $k$-integral Cayley graphs over cyclic groups (see Proposition \ref{action}).
	As a corollary, we also give a method to construct a $p$-integral Cayley graph over cyclic group $\mathbb{Z}_n$, where $n\geq 5$ and $p$ is a prime divisor of $n$ (see Corollary \ref{p}).
	
	The problem of classifying all finite groups having a connected integral undirected Cayley graph of given valency is started by Abdollahi and Vatandoost \cite{AV}. They classified all such groups for valency $2$ and $3$ in \cite{AV} and partially for valency $4$ in \cite{AV-2}. Then, the problem completely solved, using computer, for valency $4$ in \cite{MW} by Minchenko and Wanless. We are also interested in classifying all finite abelian groups having a connected $2$-integral undiredted Cayley graph with small valency. To this purpose, we define the set $\mathcal{G}_k$ to be the set of all finite groups having a connected undirected $2$-integral Cayley graph with valency $k$, and using Corollary \ref{asli} and some other  general results, we completely characterize all finite abelian groups belong to $\mathcal{G}_k$ for $2\leq k\leq 5$ (see Theorems \ref{2}, \ref{3}, \ref{4}, and \ref{5}). As a corollary, we also give the classification of $2$-integral abelian Cayley graphs with valency $2,3,4$ and $5$.

	%	{\color{pink}In \cite{KS}, Klotz and Sander determined the finite abelian groups $G$ for which all Cayley graphs over $G$ are integral.
	%		After that, Abdollahi and Jazaeri \cite{AJ}, and independently by Ahmady et al. \cite{ABM}, determined the	finite non-abelian groups $G$ for which all Cayley graphs over $G$ are integral.}	
		In \cite{EK}, Est\'{e}lyi and Kov\'{a}cs determined all finite groups $G$ for which all undirected graphs $\mathrm{Cay}(G, S)$ are integral when $4\leq |S|\leq k$ for each integer $k\geq 4$.
		Ma and Wang \cite{MWkais} characterized the finite groups each of whose cubic undirected Cayley graphs are integral.
		Inspired by the above works, it is interesting to classify the finite groups $G$ for which all undirected graphs $\mathrm{Cay}(G, S)$ are $2$-integral when $2\leq |S|\leq k$ for each integer $k\geq 2$.
		In Section 6, we focus on this problem.
		We show that there is no group $G$ for which all undirected graphs  $\mathrm{Cay}(G, S)$ are $2$-integral when $4\leq |S|$ (see Theorem \ref{thm:B4 empty}).
		Moreover, we completely classify the finite groups $G$ that all undirected graphs $\mathrm{Cay}(G, S)$ are $2$-integral when $2\leq |S|\leq k$ where $k=2$ and $3$ (see Theorems \ref{thm:B2} and \ref{thm:B3}).

	\section{Preliminaries and notations}
	
	In this paper, (di)graphs are finite, loop-free and without multiple edges. Also the groups are finite.  Our notations are standard and  mainly taken from \cite{HIK} and \cite{JL}, but for the reader's
	convenience we recall some of them as follows:
	
	\begin{itemize}
		\item $\langle g\rangle$ : the cyclic group generated by $g$.
		\item $\mathbb Z_n$ : the additive group of integers modulo $n$.
		\item $\mathbb Z_n^*$ : the multiplicative group of $\mathbb Z_n$.
		\item $\mathrm{Aut}(G)$ : the automorphism group of the group $G$.
		\item $\varphi$ : the Euler function.
		\item $\tau$ : the automorphism of an abelian group which maps every element to its inverse.
		\item $G_1\times G_2$ : the direct product of groups $G_1$ and $G_2$. 
		\item $[g]$ : the set $\{h\in G\mid \langle h\rangle=\langle g\rangle\}$, where $g\in G$.
		\item $Z(G)$ : the center of $G$.
		\item $g^H$ : $\{g^\sigma\mid \sigma\in H\}$, where $g\in G$ and $H\leq\mathrm{Aut}(G)$.
		\item $\mathrm{Irr}(G)$ : the set of all inequivalent $\mathbb C$-irreducible characters of group $G$.
		\item $K_n$ : the complete graph with $n$ vertices.
		\item $K_{n,m}$ : the complete bipartite graph with parts having $m$ and $n$ vertices. 
		\item $\Gamma_1\vee\Gamma_2$ : the join of graphs $\Gamma_1$ and $\Gamma_2$.
		\item $\Gamma_1 \square \Gamma_2$ : the Cartesian product of graphs $\Gamma_1$ and $\Gamma_2$.
		\item $\Gamma_1 \otimes \Gamma_2$ : the direct product of $\Gamma_1$ and $\Gamma_2$.
		\item $\Gamma_1 \boxtimes \Gamma_2$ : the strong product of $\Gamma_1$ and $\Gamma_2$.
		\item $\Gamma_1[ \Gamma_2]$ : the lexicographic product of $\Gamma_1$ and $\Gamma_2$.
		\item $\mathbb{SF}(\Gamma)$ : the splitting field of a graph $\Gamma$.
	\end{itemize}

	Let $\Gamma$ be a graph and $\mathbb{SF}(\Gamma)$ be the splitting field of $\Gamma$. 
	By the definition of a $k$-integral graph, $\Gamma$ being $2$-integral means that $[\mathbb{SF}(\Gamma):\mathbb{Q}]=2$. 
	It is known that $[\mathbb{SF}(\Gamma):\mathbb{Q}]=2$ implies that $\mathbb{SF}(\Gamma)=\mathbb{Q}(\alpha)$, where $\alpha\in\mathbb{SF}(\Gamma)\setminus \mathbb{Q}$ and $\alpha^2\in\mathbb Z$. 
	Next we will show that there exists a $2$-integral graph with $n$ vertices for any integer $n\geq 3$.
	
	Before the proof, we recall that the eigenvalues of the complete bipartite graph $K_{m,n}$ are $0$ with multiplicity $m+n-2$ and $\pm\sqrt{mn}$ with multiplicity $1$. 
	Hence if $mn$ is not a square integer, $\mathbb{SF}(K_{m,n})=\mathbb{Q}(\sqrt{mn})$ and so $K_{m,n}$ is $2$-integral.

	\begin{lem}
		For any integer $n\geq 3$, there exists an undirected $2$-integral graph with $n$ vertices.
	\end{lem}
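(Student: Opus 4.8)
I would realize the required graphs as complete bipartite graphs (padded, if necessary, by isolated vertices). Recall that $K_{m,k}$ has spectrum $\{0^{(m+k-2)},\ (\sqrt{mk})^{(1)},\ (-\sqrt{mk})^{(1)}\}$, so as soon as $mk$ is not a perfect square one gets $\mathbb{SF}(K_{m,k})=\mathbb{Q}(\sqrt{mk})$ and hence $\mathrm{deg}(K_{m,k})=2$; this is a $2$-integral graph on $m+k$ vertices. Thus the task reduces to the purely arithmetical claim: for every integer $n\geq 3$ there are positive integers $m,k$ with $m+k=n$ and $mk$ not a perfect square.

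To prove this claim I would argue as follows. For $n=3$ use $(m,k)=(1,2)$, where $mk=2$. For $n\geq 4$ first try $(m,k)=(1,n-1)$; this succeeds unless $n-1$ is a perfect square, say $n-1=t^2$ with $t\geq 2$ (so $n\geq 5$), in which case I would instead take $(m,k)=(3,n-3)$, a valid split because $n-3=t^2-2\geq 2$. Then $mk=3(t^2-2)$, and since $t^2\equiv 0$ or $1\pmod 3$ we have $3\nmid t^2-2$, so $3$ divides $3(t^2-2)$ to the first power only and the product is not a perfect square. This yields a \emph{connected} $2$-integral graph $K_{3,n-3}$ on $n$ vertices.

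The only step carrying any content is the case distinction in the arithmetical claim, namely treating those $n$ for which $n-1$ is a perfect square; the residues-modulo-$3$ remark settles it in one line, so I do not expect a genuine obstacle. If connectedness is not required, a shorter alternative is to take the disjoint union of $K_{1,2}$ (spectrum $\{0,\pm\sqrt2\}$, so $2$-integral) with $n-3$ isolated vertices, since extra isolated vertices only enlarge the multiplicity of the eigenvalue $0$ and leave the splitting field unchanged.
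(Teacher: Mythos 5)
Your proposal is correct and uses essentially the same approach as the paper: both realize the graph as a complete bipartite graph $K_{m,k}$ with $m+k=n$ and $mk$ not a perfect square (the paper additionally uses $P_3$ and $P_4$ for $n=3,4$ and splits the remaining cases by parity, taking $K_{m-2,m+2}$ when $n=2m$). Your mod-$3$ argument for the case $n-1=t^2$ in fact supplies an explicit justification for the existence claim that the paper merely asserts in its odd case, so nothing is missing.
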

	\begin{proof}
		The eigenvalues of $P_3$ are $0$ and $\pm \sqrt{2}$, and the eigenvalues of $P_4$ are $\frac{\pm 1\pm \sqrt{5}}{2}$.
		Thus $P_3$ and $P_4$ are both $2$-integral. So we may assume that $n\geq 5$. If $n$ is odd, then there exist distinct integers $n_1$ and $n_2$ such that $n=n_1+n_2$ and $\sqrt{n_1n_2}$ is not an integer. Now $K_{n_1,n_2}$ is $2$-integral with $n$ vertices. Let $n=2m$ be even and $m\geq 3$. In this case, all distinct eigenvalues of $K_{m-2,m+2}$ are $0$ and $\pm\sqrt{m^2-4}$.
		Clearly, $\pm\sqrt{m^2-4}\notin \mathbb{Z}$ and $K_{m-2,m+2}$ is $2$-integral. This completes the proof.
	\end{proof}
	
	Next we introduce a special subset of a group, which will be useful to study $k$-integral Cayley graphs.
	Let $G$ be a finite  group, and for any $g\in G$, define $[g]:=\{h\in G\mid \langle g\rangle=\langle h\rangle\}$. If $g$ and $h$ are two  elements of $G$, then $[g]=[h]$ or
	$[g]\cap [h]=\emptyset$. Hence the set $\Omega(G):=\{[g]\mid g\in G\}$ is a partition of $G$. 
	%Put $G_d:=\{g\in G\mid o(g)=d\}$. Then the set $\Omega(G_d):=\{[g]\mid g\in G,o(g)=d\}$ is a partition of $G_d$. 
	In the case that $G=\langle a\rangle\cong \mathbb Z_n$ is a cyclic group of order $n$ generated by $a$, for a divisor $d\neq n$ of $n$, we have 
	$[a^d]=\{a^k\mid 1\leq k\leq n-1,~(k,n)=d\}$ which is denoted by $G_n(d)$ in the literature \cite{So}.  
	
	\section{The number of $k$-integral undirected graphs}
	One of the main questions in studying $k$-integral graphs is whether the number of such connected graphs with fixed maximum degree are finite. In this section, we give a positive answer to this question.
	
	\begin{pro}\label{abdollahi}
		There exists a function $f:\mathbb{N}\times\mathbb{N}\rightarrow\mathbb{N}$ such that the number of distinct eigenvalues of any finite undirected graph with maximum degree $\Delta$ and algebraic degree $k$ is at most $f(k,\Delta)$.
		%	\sout{Let $\Gamma$ be a graph with algebraic degree $k$. 
			%		Then there exists a function $f:\mathbb{N}\times\mathbb{N}\rightarrow\mathbb{N}$ such that the number of distinct eigenvalues of $\Gamma$ is at most $f(k,\Delta(\Gamma))$, where $\Delta(\Gamma)$ is the maximum degree of vertices of $\Gamma$.}
	\end{pro}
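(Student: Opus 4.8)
The plan is to bound the distinct eigenvalues directly, by showing that every one of them is a root of a polynomial drawn from a fixed finite set depending only on $k$ and $\Delta$. Let $\Gamma$ be an undirected graph with maximum degree $\Delta$ and $\mathrm{deg}(\Gamma)=k$, and let $A$ be its adjacency matrix. Since $A$ is a symmetric $0$--$1$ matrix with zero diagonal and every row sum at most $\Delta$, Gershgorin's circle theorem (each disc is centred at $0$ with radius equal to a row sum) forces every eigenvalue of $\Gamma$ to be a real number lying in $[-\Delta,\Delta]$; moreover every eigenvalue is an algebraic integer, being a root of the monic integer characteristic polynomial $\chi_\Gamma$.

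Next I would fix a distinct eigenvalue $\theta$ of $\Gamma$ and pass to its minimal polynomial $m_\theta\in\mathbb Z[x]$ over $\mathbb Q$, which is monic and irreducible. Because $\theta\in\mathbb{SF}(\Gamma)$ we have $\mathbb Q(\theta)\subseteq\mathbb{SF}(\Gamma)$, hence $\deg m_\theta=[\mathbb Q(\theta):\mathbb Q]$ divides $[\mathbb{SF}(\Gamma):\mathbb Q]=k$; in particular $1\le\deg m_\theta\le k$. Since $m_\theta$ divides $\chi_\Gamma$ in $\mathbb Q[x]$, every root of $m_\theta$ is a root of $\chi_\Gamma$, i.e.\ an eigenvalue of $\Gamma$, and therefore also lies in $[-\Delta,\Delta]$. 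Thus $m_\theta$ lies in the set
\[
\mathcal P(k,\Delta):=\{\,p\in\mathbb Z[x]:\ p\ \text{monic, irreducible over}\ \mathbb Q,\ 1\le\deg p\le k,\ \text{all roots of}\ p\ \text{lie in}\ [-\Delta,\Delta]\,\}.
\]

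The key finiteness point is that $\mathcal P(k,\Delta)$ is finite: if $p\in\mathcal P(k,\Delta)$ has degree $d\le k$ and roots $\lambda_1,\dots,\lambda_d\in[-\Delta,\Delta]$, then the coefficient of $x^{d-i}$ in $p$ equals $\pm e_i(\lambda_1,\dots,\lambda_d)$ in absolute value at most $\binom{d}{i}\Delta^i\le 2^k\Delta^k$; as these coefficients are integers, only finitely many $p$ can occur. Since every distinct eigenvalue of $\Gamma$ is a root of some member of $\mathcal P(k,\Delta)$, the number of distinct eigenvalues is at most $\sum_{p\in\mathcal P(k,\Delta)}\deg p$, a quantity depending only on $k$ and $\Delta$. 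So one may take $f(k,\Delta):=\sum_{p\in\mathcal P(k,\Delta)}\deg p$ (or any cruder bound for it, such as $k\cdot|\mathcal P(k,\Delta)|$).

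I do not expect a serious obstacle; the two places needing a word of care are (i) that every Galois conjugate of an eigenvalue is again an eigenvalue, which is immediate from $m_\theta\mid\chi_\Gamma$ together with $\chi_\Gamma\in\mathbb Q[x]$, and (ii) that the minimal polynomial of an algebraic integer has integer coefficients, so that the bounded-coefficient argument really operates on integer tuples. It is also worth remarking that the hypothesis $\mathrm{deg}(\Gamma)=k$ is used in exactly one place, namely to get $\deg m_\theta\le k$, so the identical argument yields the statement with ``algebraic degree at most $k$'' in place of ``algebraic degree $k$.''
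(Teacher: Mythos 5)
Your argument is correct and is essentially the paper's own proof: the paper factors the characteristic polynomial into its irreducible factors (which are exactly the minimal polynomials of the eigenvalues you consider), observes that each factor has degree dividing $k$ and roots bounded by $\Delta$, and bounds the integer coefficients via Vieta's formulas just as you do. The only differences are presentational (you invoke Gershgorin explicitly and spell out why $\deg m_\theta \mid k$ via the tower law, which the paper states more tersely).
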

	\begin{proof}
		Let $\Gamma$ be an undirected graph with algebraic degree $k$ and  $p(x)$ be the characteristic polynomial of the adjacency matrix of $\Gamma$. Then we have $p(x)=p_1(x)^{t_1}\cdots p_l(x)^{t_l}$, where $p_i(x)$s are monic polynomials on $\mathbb Q[x]$, irreducible and pairwise relatively prime and $t_i$'s are positive integers. Since $p_i(x)$s are irreducible and pairwise relatively prime in $\mathbb{Q}$, the roots of $p_i(x)$s are pairwise distinct and $p_i(x)$s has no repeated roots. 
		
		Let $p_i(x)=x^{k_i}+a_1x^{k_i-1}+\cdots+a_{k_i-1}x+a_{k_i}$, where $a_i$s are integers. 
		Since the roots of $p_i(x)$ are also the roots of $p(x)$, $k_i$ divides $k$. Furthermore, since the roots of $p_i(x)$ are eigenvalues of $\Gamma$, the absolute value of any root of $p_i(x)$ is less than or equal to  $\Delta(\Gamma)$, the maximum degree of vertices of $\Gamma$. Now by Vieta's formula and triangle inequality, for each $j$, we have  $$| a_j|\leq \binom{k_i}{j}\Delta(\Gamma)^{k_i-j}.$$
		This means that the number of such polynomials and so the number of distinct eigenvalues
		of $\Gamma$ is less than or equal to a function of $k$ and $\Delta(\Gamma)$, as desired.
	\end{proof}
	\begin{thm}\label{finite}
		For any positive integers $k$ and $\Delta$, the set of all finite connected undirected graphs with algebraic degree at most $k$ and maximum degree at most $\Delta$ is finite.
		%	\sout{For any integer $k$, the set of all finite connected graphs with algebraic degree at most $k$ is finite.}	
	\end{thm}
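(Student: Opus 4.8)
The plan is to combine Proposition~\ref{abdollahi} with a spectral-radius/diameter bound together with a counting argument on how many eigenvalues of large multiplicity a connected graph of bounded degree can afford. First I would recall that by Proposition~\ref{abdollahi} any such graph $\Gamma$ has at most $f(k,\Delta)$ distinct eigenvalues, say $d\leq f(k,\Delta)$ of them; and a standard fact (the number of distinct eigenvalues of a connected graph exceeds its diameter) gives $\diam(\Gamma)\leq d-1\leq f(k,\Delta)-1$. Since $\Gamma$ has maximum degree at most $\Delta$, a graph of bounded diameter and bounded maximum degree has boundedly many vertices: $|V(\Gamma)|\leq 1+\Delta+\Delta(\Delta-1)+\cdots+\Delta(\Delta-1)^{D-1}$ where $D=f(k,\Delta)-1$. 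Thus the number of vertices is bounded by an explicit function of $k$ and $\Delta$, and since there are only finitely many graphs on a bounded number of vertices, the set in question is finite.

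Let me restructure the key steps in the order I would carry them out. Step 1: invoke Proposition~\ref{abdollahi} to get that the number of distinct adjacency eigenvalues of $\Gamma$ is at most $N:=f(k,\Delta)$. Step 2: prove (or cite) that for a connected graph the number of distinct eigenvalues is at least $\diam(\Gamma)+1$; the short argument is that if $\diam(\Gamma)=D$ then choosing vertices $u,v$ at distance $D$, the matrices $I,A,A^2,\dots,A^D$ are linearly independent because the $(u,v)$ entry of $A^i$ is zero for $i<D$ and nonzero for $i=D$, so the minimal polynomial of $A$ has degree at least $D+1$, hence $A$ has at least $D+1$ distinct eigenvalues. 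Step 3: conclude $\diam(\Gamma)\leq N-1$. Step 4: use the standard ball-growth bound for bounded-degree graphs to conclude $|V(\Gamma)|\leq g(k,\Delta)$ for an explicit $g$. Step 5: observe that up to isomorphism there are only finitely many graphs on at most $g(k,\Delta)$ vertices, so the family is finite.

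The only mildly delicate point — and what I would call the main (though still routine) obstacle — is that the degree-$2$ condition in Proposition~\ref{abdollahi} was stated for algebraic degree \emph{equal} to $k$, whereas Theorem~\ref{finite} asks for algebraic degree \emph{at most} $k$; but this is immediate since a graph of algebraic degree $k'\leq k$ has at most $f(k',\Delta)\leq \max_{1\leq k'\leq k} f(k',\Delta)$ distinct eigenvalues, so replacing $f$ by $\tilde f(k,\Delta):=\max_{k'\leq k} f(k',\Delta)$ handles the whole range at once. Everything else is bookkeeping: one just needs to be careful that the bounds in Proposition~\ref{abdollahi} are effective (they are, via Vieta plus the $\Delta$-bound on eigenvalues), so that $g(k,\Delta)$ is genuinely a finite number for each fixed pair $(k,\Delta)$. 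I expect no real difficulty, and the proof will be only a few lines once the diameter inequality is in hand.
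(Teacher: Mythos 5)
Your proposal is correct and follows essentially the same route as the paper's own proof: bound the number of distinct eigenvalues via Proposition~\ref{abdollahi}, deduce a diameter bound from the fact that a connected graph with $s$ distinct eigenvalues has diameter at most $s-1$, and then bound the order by the standard ball-growth estimate for bounded-degree graphs. The only additions you make (the proof of the diameter inequality and the remark about replacing $f$ by $\max_{k'\leq k} f(k',\Delta)$) are routine details the paper leaves implicit.
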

	\begin{proof}
		Since for any connected undirected graph with $s$ distinct eigenvalues and diameter $D$, we have $D\leq s-1$, the result directly follows from Proposition \ref{abdollahi} and the fact that the order of a connected graph with given diameter and maximum degree is bounded.
	\end{proof}
	
	\section{$k$-integral Cayley graphs}
	In this section, we are going to study the algebraic degrees of normal Cayley graphs. 
	First let us recall two known results about the algebraic degrees of abelian Cayley graphs and  normal Cayley (di)graphs. Note that in these two results, $S$ is not necessarily inverse-closed and so $\Gamma$ can be a Cayley digraph. Furthermore, by $S^k$ we mean the multiset $\{s^k\mid s\in S\}$. 
	
	\begin{lem}\cite[Theorem 1]{LM}\label{lem:abelian-deg}
		Let $G$ be an abelian group of order $n$ and $\Gamma=\mathrm{Cay}(G,S)$ for some subset $S$ of $G$. Then the algebraic degree of $\Gamma$ is $\mathrm{deg}(\Gamma)=\frac{\varphi(n)}{|H|}$,
		where $H=\{k\in \mathbb{Z}_n^*\mid S^k=S\}$.	
	\end{lem}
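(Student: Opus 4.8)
The plan is to compute $\mathrm{deg}(\Gamma)$ by realizing $\mathbb{SF}(\Gamma)$ as a subfield of a cyclotomic field and reading off its degree via Galois theory. Write $\widehat G$ for the group of linear characters of the abelian group $G$. It is standard that the eigenvalues of $\Gamma=\mathrm{Cay}(G,S)$, with multiplicity, are exactly the sums $\lambda_\chi:=\sum_{s\in S}\chi(s)$ over $\chi\in\widehat G$: the vector $(\chi(g))_{g\in G}$ is an eigenvector with eigenvalue $\lambda_\chi$, and these eigenvectors span $\mathbb{C}^G$; this needs no hypothesis on $S$, so it also covers the digraph case. Since each $\chi(s)$ is a root of unity whose order divides $n$, every eigenvalue lies in $\mathbb{Z}[\zeta_n]$, where $\zeta_n$ is a primitive $n$-th root of unity; hence $\mathbb{SF}(\Gamma)\subseteq\mathbb{Q}(\zeta_n)$. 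The field $\mathbb{Q}(\zeta_n)$ is Galois over $\mathbb{Q}$ with $\mathrm{Gal}(\mathbb{Q}(\zeta_n)/\mathbb{Q})=\{\sigma_k:k\in\mathbb{Z}_n^*\}$ and $\sigma_k(\zeta_n)=\zeta_n^k$, so by the fundamental theorem of Galois theory $\mathbb{SF}(\Gamma)$ is the fixed field of $H':=\{k\in\mathbb{Z}_n^*:\sigma_k\text{ fixes }\mathbb{SF}(\Gamma)\}$ and $\mathrm{deg}(\Gamma)=[\mathbb{SF}(\Gamma):\mathbb{Q}]=\varphi(n)/|H'|$. It therefore suffices to prove $H'=H$.

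For this I would first observe that $\sigma_k$ fixes $\mathbb{SF}(\Gamma)$ if and only if it fixes every generator $\lambda_\chi$, and then compute, using that $\sigma_k$ raises each $n$-th root of unity to its $k$-th power,
\[
\sigma_k(\lambda_\chi)=\sum_{s\in S}\chi(s)^k=\sum_{s\in S}\chi(s^k)=\sum_{t\in S^k}\chi(t),
\]
where the last equality holds because $s\mapsto s^k$ is a bijection of $G$ when $\gcd(k,n)=1$, so $S^k$ is genuinely a subset of size $|S|$ and the multiset subtlety does not arise. Hence $k\in H'$ precisely when $\sum_{t\in S^k}\chi(t)=\sum_{s\in S}\chi(s)$ for all $\chi\in\widehat G$.

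The only step carrying real content is to see that this family of equations, taken over all $\chi$, is equivalent to $S^k=S$. I would phrase it inside the group algebra $\mathbb{C}[G]$: setting $\mathbf{1}_S=\sum_{s\in S}s$ and $\mathbf{1}_{S^k}=\sum_{t\in S^k}t$, the equations say that $\chi(\mathbf{1}_S-\mathbf{1}_{S^k})=0$ for every $\chi\in\widehat G$; since $G$ is abelian the characters span the dual space of $\mathbb{C}[G]$ (equivalently the Fourier transform $\mathbb{C}[G]\to\mathbb{C}^{\widehat G}$ is an isomorphism), so $\mathbf{1}_S=\mathbf{1}_{S^k}$, that is, $S=S^k$; the reverse implication is immediate from the displayed identity. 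This gives $H'=H$, hence $\mathrm{deg}(\Gamma)=\varphi(n)/|H|$. I do not expect any genuine obstacle beyond this character-independence step, which is itself a classical fact; the remaining work is only marrying the cyclotomic Galois action to the standard abelian Cayley spectrum, together with the routine check that $H$ is a subgroup of $\mathbb{Z}_n^*$ (closure under products and inverses follows from $S^{k\ell}=(S^k)^\ell$).
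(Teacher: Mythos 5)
The paper does not prove this lemma at all---it is imported verbatim as \cite[Theorem 1]{LM}---so there is no internal proof to compare against. Your argument is correct and complete: the character-sum description of the spectrum, the containment $\mathbb{SF}(\Gamma)\subseteq\mathbb{Q}(\zeta_n)$, the Galois-theoretic reduction to computing the stabilizer $H'$, and the identification $H'=H$ via $\sigma_k(\lambda_\chi)=\chi(\mathbf{1}_{S^k})$ together with the invertibility of the Fourier transform on $\mathbb{C}[G]$ are all sound, and you correctly handle the two points where care is needed (the digraph case and the fact that $s\mapsto s^k$ is a bijection for $\gcd(k,n)=1$, so $S^k$ is an honest set). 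This is the standard route to the result and matches the argument in the cited source.
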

	\begin{lem}\cite[Corollary 3.10]{SMT-2}\label{lem:normal-deg}
		Let $G$ be a finite group with exponent $m$, that is the smallest positive integer such that $g^{m}=1$ for all $g\in G$, and $\Gamma=\mathrm{Cay}(G,S)$ for some normal subset $S$ of $G$. Then the algebraic degree of $\Gamma$ is
		$\mathrm{deg}(\Gamma)=\frac{\varphi(m)}{|H'|}$,
		where $H'=\{k\in \mathbb{Z}_m^*\mid S^k=S\}$.
	\end{lem}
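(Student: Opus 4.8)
The plan is to pin down $\mathbb{SF}(\Gamma)$ as an explicit subfield of the $m$-th cyclotomic field and then read off its degree with the fundamental theorem of Galois theory. First I would invoke the classical spectral description of normal Cayley graphs: since $S$ is a union of conjugacy classes, the adjacency matrix $A=\sum_{s\in S}\rho_{\mathrm{reg}}(s)$ commutes, on the $\chi$-isotypic block of the regular representation, with the whole image of $G$, so by Schur's lemma it acts there as the scalar $\frac{1}{\chi(1)}\sum_{s\in S}\chi(s)$. Hence the spectrum of $\Gamma$ is exactly
\[
\lambda_\chi=\frac{1}{\chi(1)}\sum_{s\in S}\chi(s),\qquad \chi\in\mathrm{Irr}(G),
\]
(with multiplicity $\chi(1)^2$), and $\mathbb{SF}(\Gamma)=\mathbb{Q}\big(\lambda_\chi:\chi\in\mathrm{Irr}(G)\big)$. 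Each $\chi(s)$ is a sum of eigenvalues of $\rho_\chi(s)$, which are roots of unity of order dividing $\mathrm{ord}(s)\mid m$; so every $\lambda_\chi$ lies in $\mathbb{Q}(\zeta_m)$ and $\mathbb{Q}\subseteq\mathbb{SF}(\Gamma)\subseteq\mathbb{Q}(\zeta_m)$. As $\mathbb{Q}(\zeta_m)/\mathbb{Q}$ is abelian with $\mathrm{Gal}(\mathbb{Q}(\zeta_m)/\mathbb{Q})=\{\sigma_k:k\in\mathbb{Z}_m^*\}$, where $\sigma_k(\zeta_m)=\zeta_m^k$, the intermediate field $\mathbb{SF}(\Gamma)$ is Galois over $\mathbb{Q}$ and
\[
\mathrm{deg}(\Gamma)=[\mathbb{SF}(\Gamma):\mathbb{Q}]=\frac{\varphi(m)}{|H_0|},\qquad H_0:=\{\sigma_k:\sigma_k \text{ fixes } \mathbb{SF}(\Gamma) \text{ pointwise}\}.
\]
It then remains to prove $H_0=\{\sigma_k:k\in H'\}$.

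For this I would use two standard facts. First, for $k$ coprime to $m=\exp(G)$ the map $g\mapsto g^k$ is a bijection of $G$ (its inverse is $g\mapsto g^{k'}$ with $kk'\equiv 1\pmod m$, using $g^m=1$), so $S^k$ is genuinely a \emph{set}, and it is again a normal subset. Second, $\sigma_k$ raises every $m$-th root of unity to the $k$-th power, so for all $\chi$ and $g$ one has $\sigma_k(\chi(g))=\mathrm{tr}\big(\rho_\chi(g)^k\big)=\chi(g^k)$. Combining these,
\[
\sigma_k(\lambda_\chi)=\frac{1}{\chi(1)}\sum_{s\in S}\chi(s^k)=\frac{1}{\chi(1)}\sum_{t\in S^k}\chi(t).
\]
If $k\in H'$, i.e. $S^k=S$, the right-hand side equals $\lambda_\chi$ for every $\chi$, so $\sigma_k$ fixes every eigenvalue and hence fixes $\mathbb{SF}(\Gamma)$ pointwise; thus $\{\sigma_k:k\in H'\}\subseteq H_0$. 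Conversely, if $\sigma_k\in H_0$ then $\sum_{t\in S^k}\chi(t)=\sum_{s\in S}\chi(s)$ for all $\chi\in\mathrm{Irr}(G)$, i.e. the class function $h:=\mathbf 1_{S^k}-\mathbf 1_S$ satisfies $\sum_{g\in G}h(g)\chi(g)=0$ for every $\chi\in\mathrm{Irr}(G)$; since complex conjugation permutes $\mathrm{Irr}(G)$, this gives $\langle h,\psi\rangle=0$ for all $\psi\in\mathrm{Irr}(G)$, and as $\mathrm{Irr}(G)$ is an orthonormal basis of the space of class functions, $h=0$, so $S^k=S$ and $k\in H'$. Hence $H_0=\{\sigma_k:k\in H'\}$, whence $|H_0|=|H'|$ (in particular $H'$ is a subgroup of $\mathbb{Z}_m^*$) and $\mathrm{deg}(\Gamma)=\varphi(m)/|H'|$.

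The step needing the most care is the inclusion $H_0\subseteq\{\sigma_k:k\in H'\}$: one must turn the numerical equalities $\sigma_k(\lambda_\chi)=\lambda_\chi$ back into the \emph{set} identity $S^k=S$, and the clean way to do this is via linear independence/orthogonality of irreducible characters applied to $\mathbf 1_{S^k}-\mathbf 1_S$ — for which it is essential that both $S$ and $S^k$ are normal, so that this difference really is a class function. Everything else is bookkeeping: confirming the spectral formula for normal Cayley graphs, checking that $g\mapsto g^k$ is a bijection so $S^k$ carries no multiplicities, and invoking the fundamental theorem of Galois theory for the abelian extension $\mathbb{Q}(\zeta_m)/\mathbb{Q}$.
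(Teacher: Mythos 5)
The paper offers no proof of this lemma---it is imported verbatim from \cite[Corollary 3.10]{SMT-2}---so there is no internal argument to compare against, but your proof is correct and follows exactly the standard route used in that reference: the character-theoretic eigenvalue formula $\lambda_\chi=\chi(S)/\chi(1)$ for normal connection sets, the containment $\mathbb{SF}(\Gamma)\subseteq\mathbb{Q}(\zeta_m)$, the Galois correspondence for the abelian extension $\mathbb{Q}(\zeta_m)/\mathbb{Q}$, and the identification of the fixing subgroup with $H'$ via $\sigma_k(\chi(g))=\chi(g^k)$ and orthogonality of irreducible characters. You correctly isolate and handle the two delicate points (that $g\mapsto g^k$ is a bijection so $S^k$ is an honest set, and that $\mathbf{1}_{S^k}-\mathbf{1}_S$ is a class function so linear independence of characters forces $S^k=S$), so nothing is missing.
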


	\iffalse Let $G$ be a finite group with exponent $m$, that is the smallest positive integer such that $g^{m}=1$ for all $g\in G$, and $\Gamma=\mathrm{Cay}(G,S)$ for some normal subset $S$ of $G$. Then eigenvalues of $\Gamma$ are of the form $\frac{\chi(S)}{\chi(1)}$, where $\chi$ is an irreducible character of $G$ and $\chi(S)=\sum_{s\in S}\chi(s)$ \cite{Ito,Z}. Also by \cite[Corollary 3.10]{SMT-2},  $\deg(\Gamma)=\frac{\varphi(m)}{|H'|}$, where $H'=\{k\in\mathbb Z_m^*\mid S^k=S\}$,
	where $S^k=\{s^k\mid s\in S\}$.\fi

	By the following two lemmas, we reduce the study of $k$-integral normal Cayley graphs to the study of $k$-integral $\mathrm{Cay}(\langle g\rangle,S)$, where $\emptyset\neq S\subseteq [g]$.
	\begin{lem}\label{comp}
		Let $G$ be a finite group, $g\in G$ and $\emptyset\neq S\subseteq [g]$. Then $\langle S\rangle=\langle g\rangle$ and the splitting field of $\mathrm{Cay}(G,S)$ and $\mathrm{Cay}(\langle g\rangle,S)$ are the same.
	\end{lem}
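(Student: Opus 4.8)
The plan is to prove the two assertions separately; the first is a one-line consequence of the definition of $[g]$, and the second reduces to the elementary observation that $\mathrm{Cay}(G,S)$ is nothing but a disjoint union of copies of $\mathrm{Cay}(\langle g\rangle,S)$.

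First I would deal with $\langle S\rangle=\langle g\rangle$. Since $S\neq\emptyset$, choose any $s\in S$; by the definition of $[g]$ we have $\langle s\rangle=\langle g\rangle$, so on one hand $\langle g\rangle=\langle s\rangle\subseteq\langle S\rangle$, and on the other hand $S\subseteq[g]\subseteq\langle g\rangle$ forces $\langle S\rangle\subseteq\langle g\rangle$; hence $\langle S\rangle=\langle g\rangle$. (In particular $\mathrm{Cay}(\langle g\rangle,S)$ is connected, by the connectedness criterion recalled in the Introduction.)

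Next, set $H=\langle g\rangle$, so that $S\subseteq H$. I would show that, under the arc convention in force ($(x,y)$ is an arc of $\mathrm{Cay}(G,S)$ iff $yx^{-1}\in S$), the vertex set $G$ splits along the right cosets of $H$: if $x$ and $y$ lie in distinct right cosets of $H$ then $yx^{-1}\notin H\supseteq S$, so $x$ and $y$ are non-adjacent, while for a fixed right coset $Hx$ the bijection $a\mapsto ax$ from $H$ onto $Hx$ carries an arc $(a,b)$ of $\mathrm{Cay}(H,S)$, i.e.\ $ba^{-1}\in S$, to the pair $(ax,bx)$ with $(bx)(ax)^{-1}=ba^{-1}\in S$, and conversely. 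Thus the induced subgraph on each right coset $Hx$ is isomorphic to $\mathrm{Cay}(H,S)$, and $\mathrm{Cay}(G,S)$ is the disjoint union of $[G:H]$ such copies. Consequently its adjacency matrix is permutation-similar to a block-diagonal matrix with $[G:H]$ identical diagonal blocks, each equal to the adjacency matrix of $\mathrm{Cay}(H,S)$, so the characteristic polynomial of $\mathrm{Cay}(G,S)$ is the $[G:H]$-th power of that of $\mathrm{Cay}(H,S)$. In particular the two graphs have the same set of eigenvalues, and since the splitting field is by definition the smallest extension of $\mathbb{Q}$ containing all eigenvalues, we get $\mathbb{SF}(\mathrm{Cay}(G,S))=\mathbb{SF}(\mathrm{Cay}(H,S))$.

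I do not expect a genuine obstacle here. The only two points that call for a bit of care are (i) matching the coset side to the stated arc convention — it is the right cosets, not the left cosets, that yield the decomposition for the rule $yx^{-1}\in S$ — and (ii) noting that the splitting field depends only on the set of eigenvalues and not on their multiplicities, so replacing a graph by $[G:H]$ disjoint copies of it leaves the splitting field unchanged.
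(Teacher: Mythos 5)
Your proposal is correct and follows essentially the same route as the paper: the first claim from $\langle s\rangle=\langle g\rangle$ for any $s\in S$, and the second from the fact that $\mathrm{Cay}(G,S)$ is a disjoint union of $|G:\langle S\rangle|$ copies of $\mathrm{Cay}(\langle S\rangle,S)$, so the two graphs share the same set of distinct eigenvalues and hence the same splitting field. You merely spell out the coset decomposition and the block-diagonal adjacency matrix in more detail than the paper does.
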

	\begin{proof}
		Since $x\in [g]$ if and only if $\langle x\rangle=\langle g\rangle$, the first part is clear. On the other hand, $\mathrm{Cay}(G,S)$ is isomorphic to the disjoint union of $|G:\langle S\rangle|$ copies of
		$\mathrm{Cay}(\langle S\rangle,S)$, where $\mathrm{Cay}(\langle S\rangle,S)$ is a connected component of $\mathrm{Cay}(G,S)$. Hence the sets of all distinct eigenvalues of $\mathrm{Cay}(G,S)$ and $\mathrm{Cay}(\langle S\rangle,S)$ are the same and so their splitting fields are, as desired.
	\end{proof}
	
	\begin{lem}\label{union}
		Let $G$ be a finite group, $\Omega(G)=\{[g_1],\ldots,[g_t]\}$, $\varnothing\neq S$ be a normal  subset of $G$, and $\Gamma=\mathrm{Cay}(G,S)$. Then $S=\bigcup_{i=1}^t S_i$, where $S_i=[g_i]\cap S$ for each $1\leq i\leq t$, and for $S_i\neq \varnothing$, $\deg(\Gamma_i)$ is a divisor of $\deg(\Gamma)$, where $\Gamma_i=\mathrm{Cay}(G,S_i)$.
	\end{lem}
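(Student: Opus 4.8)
My plan is to reduce the computation of $\deg(\Gamma_i)$ to a cyclic group and then compare it with $\deg(\Gamma)$ through the subgroups of $\mathbb{Z}_m^*$ appearing in Lemmas \ref{lem:abelian-deg} and \ref{lem:normal-deg}. The decomposition $S=\bigcup_{i=1}^t S_i$ needs no argument, since $\Omega(G)$ is a partition of $G$ and $S_i=[g_i]\cap S$. So fix $i$ with $S_i\neq\varnothing$, set $d_i=|g_i|$, and let $m$ be the exponent of $G$, so $d_i\mid m$. Since $\varnothing\neq S_i\subseteq[g_i]$, Lemma \ref{comp} gives $\mathbb{SF}(\Gamma_i)=\mathbb{SF}(\mathrm{Cay}(\langle g_i\rangle,S_i))$; and because $\langle g_i\rangle$ is abelian (so $S_i$ is automatically a normal subset of it — note $S_i$ itself is usually \emph{not} normal in $G$), Lemma \ref{lem:abelian-deg} gives $\deg(\Gamma_i)=\varphi(d_i)/|H_i|$ with $H_i=\{k\in\mathbb{Z}_{d_i}^*\mid S_i^k=S_i\}$. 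On the other hand Lemma \ref{lem:normal-deg} gives $\deg(\Gamma)=\varphi(m)/|H|$ with $H=\{k\in\mathbb{Z}_m^*\mid S^k=S\}$. Reduction modulo $d_i$ is a surjective homomorphism $\pi\colon\mathbb{Z}_m^*\twoheadrightarrow\mathbb{Z}_{d_i}^*$ (a standard fact about unit groups).

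The key claim I would prove is $\pi(H)\subseteq H_i$. Every $s\in S_i$ has order exactly $d_i$ (since $\langle s\rangle=\langle g_i\rangle$), so the value of $s^k$ depends only on $k$ mod $d_i$; and if $k\in\mathbb{Z}_m^*$ then $\gcd(k,d_i)=1$, so $x\mapsto x^k$ is an automorphism of $\langle g_i\rangle$, whence $s^k$ again generates $\langle g_i\rangle$, i.e. $s^k\in[g_i]$. Thus for $k\in H$ and $s\in S_i$ we get $s^k\in S^k\cap[g_i]=S\cap[g_i]=S_i$, so $S_i^k\subseteq S_i$, and equality follows by counting (the power map is injective on $\langle g_i\rangle$). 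Hence $k\bmod d_i\in H_i$, i.e. $\pi(H)\subseteq H_i$.

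To conclude I would write $K=\ker\pi$, so that $\varphi(m)=|\mathbb{Z}_m^*|=\varphi(d_i)\,|K|$ by surjectivity of $\pi$, and $|H|=|\pi(H)|\cdot|H\cap K|$ by the first isomorphism theorem applied to $\pi|_H$, giving
\[
\deg(\Gamma)=\frac{\varphi(m)}{|H|}=\frac{\varphi(d_i)}{|\pi(H)|}\cdot\frac{|K|}{|H\cap K|}=\deg(\Gamma_i)\cdot[H_i:\pi(H)]\cdot[K:H\cap K],
\]
where both indices are positive integers because $\pi(H)\le H_i$ and $H\cap K\le K$; hence $\deg(\Gamma_i)\mid\deg(\Gamma)$. (Equivalently, $\pi(H)\subseteq H_i$ says exactly that $\mathbb{SF}(\Gamma_i)$, realised as a fixed field inside $\mathbb{Q}(\zeta_{d_i})\subseteq\mathbb{Q}(\zeta_m)$, lies inside $\mathbb{SF}(\Gamma)$, and then the tower law finishes it.)

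The only genuinely delicate point, and the one I would be most careful about, is the claim $\pi(H)\subseteq H_i$: the temptation is to treat $S_i$ as a normal subset of $G$ and apply Lemma \ref{lem:normal-deg} to $\Gamma_i$ directly, but $S_i$ is generally not normal in $G$. What rescues the argument is that $[g_i]$ is closed under every power map with exponent coprime to $d_i$, so the $H$-invariance of $S$ "localises" to the $H_i$-invariance of $S_i$; everything else is routine bookkeeping with the two degree formulas and elementary index arithmetic in $\mathbb{Z}_m^*$.
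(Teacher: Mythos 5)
Your proof is correct and follows essentially the same route as the paper's: the same reduction via Lemma \ref{comp} and Lemma \ref{lem:abelian-deg} to $\mathrm{Cay}(\langle g_i\rangle,S_i)$, the same key observation that $S_i^k=([g_i]\cap S)^k\subseteq[g_i]\cap S=S_i$ for $k\in H$, and the same bookkeeping with the surjection $\mathbb{Z}_m^*\twoheadrightarrow\mathbb{Z}_{m_i}^*$, its kernel, and the first isomorphism theorem. Your explicit factorization $\deg(\Gamma)=\deg(\Gamma_i)\cdot[H_i:\pi(H)]\cdot[K:H\cap K]$ is just a slightly more streamlined way of stating the paper's divisibility chain.
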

	\begin{proof}
		Since $\Omega(G)$ is a partition of $G$, $X=\{S_i\mid 1\leq i\leq t, S_i\neq\emptyset\}$ is a partition of $S$.
		Let $m$ be the exponent of $G$ and $m_i$ be the order of $g_i$. Then $m_i|m$.
		By Lemma \ref{lem:normal-deg}, $\deg(\Gamma)=\frac{\varphi(m)}{|H|}$, where $H=\{k\in\mathbb Z_m^*\mid S^k=S\}$. If $S_i\neq \varnothing$, then by Lemmas \ref{comp} and \ref{lem:abelian-deg}, $\deg(\Gamma_i)=\deg(\Cay(\langle g_i\rangle,S_i))=\frac{\varphi(m_i)}{|H_i|}$, where $H_i=\{k\in\mathbb Z_{m_i}^*\mid S_i^k=S_i\}$.

		Suppose $k\in H$. Then $(k,m)=1$ and $S^k=S$. Since $m_i|m$, clearly $(k,m_i)=1$. So $\langle g_i^k \rangle=\langle g_i\rangle$ and therefore $[g_i]^k=[g_i^k]=[g_i]$. Thus
	\[S_i^k=([g_i]\cap S)^k\subseteq [g_i]^k\cap S^k=[g_i]\cap S=S_i.\]
	 Since $X$ is a partition of $S$, we have $S_i^k=S_i$.
	 
	  Since $m_i|m$, the map $\pi:\Bbb Z_m^*\rightarrow\Bbb Z_{m_i}^*$ which maps any $k$(mod $m$) to $k$(mod $m_i$) is an onto group homomorphism. Let $K$ be the kernel of $\pi$. Then $|K|=\frac{\varphi(m)}{\varphi(m_i)}$, by the First Isomorphism Theorem. Let $\pi'$ be the restriction of $\pi$ to $H$. Then, by the above argument, $\pi'(H)\leq H_i$. Hence
	  $|H|$ divides $|K'||H_i|$, where $K'$ is the kernel of $\pi'$, again by the First Isomorphism Theorem. On the other hand, $K'=K\cap H$ and so $|K'|$ divides $|K|$. This implies that 
	  $|H|$ divides $\frac{\varphi(m)}{\varphi(m_i)}|H_i|$ and so $\frac{\varphi(m_i)}{|H_i|}$ divides $\frac{\varphi(m)}{|H|}$ as desired.
	\end{proof}

	Let $G=\langle a\rangle\cong\mathbb Z_n$ and $\Gamma=\mathrm{Cay}(G,S)$. 
	By Lemma \ref{lem:abelian-deg}, we know that $\deg(\Gamma)$ is a divisor of $\varphi(n)$. 
	In what follows, we are going to give a way to find possible inverse-closed generating sets of $G$ such that $\deg(\mathrm{Cay}(G,S))=1$ or $p$ for some prime $p\mid \varphi(n)$.
	\begin{pro}\label{action}
		Let $n\geq 3$,  $d\neq n$, $d\mid n$, $G=\langle a\rangle\cong\mathbb Z_n$ and $A=\mathrm{Aut}(G)$. Let $T_d=\{\sigma\in A\mid g^\sigma=g,~\forall g\in[a^d]\}$ and $K_d$ be a subgroup of $A$ containing $T_d$.
		For a fixed integer $1\leq k\leq n-1$ with $(k,n)=d$, put $S_{k,d}=\{(a^k)^\sigma\mid \sigma\in K_d\}$ and $\Gamma_{k,d}=\mathrm{Cay}(G,S_{k,d})$. Then
		\begin{itemize}
			\item[$(1)$] $S_{k,d}$ is inverse-closed if $K_d$ contains the element $\tau$ of $A$ which maps $a$ to $a^{-1}$.
			\item[$(2)$] $|S_{k,d}|=|K_d:T_d|\leq \varphi(\frac{n}{d})$,
			\item[$(3)$] $S_{k,d}$ is a generating set of $G$ if and only if $d=1$. In this case, $T_1=\{1_A\}$.
			\item[$(4)$] $\deg(\Gamma_{k,d})$ is a divisor of $|A:K_d|$.
			\item[$(5)$] $\Gamma_{k,d}$ is integral if and only if $A=K_d$. In this case, $S_{k,d}=[a^d]$. 
		\end{itemize} 
		In particular, if $|A:K_d|=p$ for some prime $p$, then $\Gamma_{k,d}$ is $p$-integral.
	\end{pro}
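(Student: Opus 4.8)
The plan is to work through the five items by directly computing with the group $A=\Aut(G)\cong\mathbb Z_n^*$ and its natural action on $[a^d]$, using the two reduction lemmas already established. First I would set up the basic correspondence: since $G=\langle a\rangle\cong\mathbb Z_n$, an automorphism $\sigma\in A$ is determined by $a^\sigma=a^k$ for some $k\in\mathbb Z_n^*$, and $\sigma$ fixes $a^j$ iff $a^{jk}=a^j$ iff $(k-1)j\equiv 0\pmod n$. Applying this with $j$ running over the exponents appearing in $[a^d]=\{a^j\mid (j,n)=d\}$ identifies $T_d$ concretely. The key observation for item $(2)$ is that $K_d$ acts on $S_{k,d}$ with $T_d$ acting trivially (by definition of $T_d$), so $|S_{k,d}|$ is the size of the orbit of $a^k$ under $K_d$, which equals $|K_d:\mathrm{Stab}_{K_d}(a^k)|$; since the stabilizer contains $T_d$ this gives $|S_{k,d}|$ divides $|K_d:T_d|$ — and in fact equals it once one checks the stabilizer of $a^k$ in $A$ is exactly $T_d$ (an automorphism fixing $a^k$ with $(k,n)=d$ must fix all of $[a^d]$, since $[a^d]=[a^k]$ and $A$ acts on $[a^d]$ the same way as on the generator $a^{n/d}$ of $\langle a^d\rangle$). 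Then $|K_d:T_d|\le |A:T_d|=\varphi(n/d)$ because $A/T_d$ embeds in $\Aut(\langle a^d\rangle)\cong\mathbb Z_{n/d}^*$.

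For item $(1)$, $\tau\in K_d$ sends $(a^k)^\sigma$ to $(a^k)^{\sigma\tau}=((a^k)^{-1})^{\sigma}$... more carefully, $(a^{-k})^\sigma=((a^k)^\sigma)^{-1}$ since $\sigma$ is a homomorphism and $\tau$ commutes appropriately in the abelian automorphism group; so $\tau\in K_d$ gives $S_{k,d}^{-1}=(a^{-k})^{K_d}=S_{k,d}$. For item $(3)$, $S_{k,d}\subseteq\langle a^k\rangle=\langle a^d\rangle$, which equals $G$ iff $d=1$; conversely when $d=1$ every element of $S_{k,1}$ is a generator so $\langle S_{k,1}\rangle=G$, and $T_1=\{\sigma\mid \sigma \text{ fixes every generator of } G\}=\{1_A\}$ since fixing a generator forces $\sigma=1$.

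The main content is items $(4)$ and $(5)$, and the core computation there is to identify $H=\{k'\in\mathbb Z_n^*\mid S_{k,d}^{k'}=S_{k,d}\}$ from Lemma \ref{lem:abelian-deg}. The claim to prove is $H$ corresponds exactly to $K_d$ under the isomorphism $\mathbb Z_n^*\cong A$: raising $S_{k,d}$ to the $k'$-th power is the same as applying the automorphism $\sigma_{k'}\colon a\mapsto a^{k'}$, so $S_{k,d}^{k'}=(a^k)^{K_d\sigma_{k'}}$, and this equals $(a^k)^{K_d}=S_{k,d}$ iff $K_d\sigma_{k'}$ and $K_d$ give the same orbit of $a^k$, iff (using that the stabilizer of $a^k$ in $A$ is $T_d\le K_d$) $\sigma_{k'}\in K_d$. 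Hence $|H|=|K_d|$, so Lemma \ref{lem:abelian-deg} gives $\deg(\Gamma_{k,d})=\varphi(n)/|H|=\varphi(n)/|K_d|=|A:K_d|$ — this actually shows $(4)$ with equality, not merely divisibility, though stating it as divisibility is safe. Then $(5)$ is immediate: $\deg(\Gamma_{k,d})=1$ iff $|A:K_d|=1$ iff $K_d=A$, in which case $S_{k,d}=(a^k)^A=[a^d]$ (the full orbit is all of $[a^d]$ since $A$ acts transitively on $[a^d]$, as $a^k$ and any $a^{k'}$ with $(k',n)=d$ are related by the automorphism $a^{k}\mapsto a^{k'}$, legitimate because $k,k'$ are both units times $d$, more precisely $k'=ck$ for a unit $c$). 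The final "in particular" statement then follows trivially: $|A:K_d|=p$ prime means $\deg(\Gamma_{k,d})$ divides $p$ and is not $1$ (since $K_d\neq A$), hence equals $p$. I expect the one genuine subtlety to be the careful verification that the $A$-stabilizer of $a^k$ is precisely $T_d$ (equivalently, that $A$ acts faithfully and transitively enough on $[a^d]$ that fixing one generator of $\langle a^d\rangle$ forces fixing all of them); once that is nailed down, items $(2)$, $(4)$, $(5)$ all fall out of the same orbit–stabilizer bookkeeping.
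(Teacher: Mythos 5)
Your proposal is correct, and it follows the same overall strategy as the paper (reduce everything to the action of $A=\mathrm{Aut}(G)$ on the orbit $[a^d]$, identify the pointwise stabilizer $T_d$ with the stabilizer of the single point $a^k$, and feed the setwise stabilizer of $S_{k,d}$ into Lemma \ref{lem:abelian-deg}). The one substantive difference is in items $(4)$--$(5)$: the paper only records the containment $K_d\leq H$, where $H=\{\sigma\in A\mid S_{k,d}^{\sigma}=S_{k,d}\}$, and deduces that $\deg(\Gamma_{k,d})$ divides $|A:K_d|$; for the converse of $(5)$ it then has to invoke So's theorem \cite[Theorem 7.1]{So} to conclude $S_{k,d}=[a^d]$ from integrality. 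You instead prove the reverse containment $H\leq K_d$ (via $\sigma\in H\Leftrightarrow (a^k)^\sigma\in S_{k,d}\Leftrightarrow\sigma\in \mathrm{Stab}_A(a^k)K_d=T_dK_d=K_d$), so that $\deg(\Gamma_{k,d})=|A:K_d|$ holds with equality. This is a genuine sharpening: it makes $(5)$ and the final ``in particular'' clause immediate, with no appeal to So's classification, and your verification that $\mathrm{Stab}_A(a^k)=T_d$ (an automorphism fixing one generator of $\langle a^d\rangle$ fixes that subgroup pointwise, hence fixes all of $[a^d]$) is exactly the point the paper's ``abelian transitive implies regular'' argument relies on implicitly. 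The only blemish is notational: the generator of $\langle a^d\rangle$ is $a^d$ (of order $n/d$), not $a^{n/d}$; this does not affect the argument.
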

	\begin{proof}
		Part $(1)$ is obvious by the definition of $\tau$. 
		Note that $[a^d]$ is the set of all elements of $G$ with order $\frac{n}{d}$.
		Since the order of $a^k$ is $\frac{n}{(k,n)}=\frac{n}{d}$ and each automorphism of $G$ preserves the order of elements of $G$, $S_{k,d}\subseteq [a^d]$.
		Moreover, $A$ can acts on $[a^d]$ and $T_d$ is the kernel of this action.
		Hence $A/T_d$ is a permutation group on $[a^d]$. 
		Note that $S_{k,d}$ is the orbit of $a^k$ under the action of $K_d$ on $G$. 
		Since $A$ is abelian, $K_d/T_d$ is abelian. This implies that $K_d/T_d$ is regular on $S_{k,d}$, which means $|S_{k,d}|=|K_d:T_d|\leq |[a^d]|=\varphi(\frac{n}{d})$. This proves $(2)$. 
		Since $S_{k,d}\subseteq [a^d]$, $\langle S_{k,d}\rangle \subseteq\langle a^d\rangle$. Hence
		$S_{k,d}$ generates $G$ if and only if $d=1$. Clearly if $d=1$ then $T_1$ fixes $a$ and so $T_1=\{1_A\}$. This proves $(3)$.
		
		By Lemma \ref{lem:abelian-deg}, $\deg(\Gamma_{k,d})=\frac{|A|}{|H|}$, where $H=\{\sigma\in A\mid (S_{k,d})^\sigma=S_{k,d}\}$. Clearly, we have $T_d\leq H$. On the other hand $K_d\leq H$. Hence we have $K_d/T_d\leq H/T_d\leq A/T_d$ and so $|A:K_d|=|A/T_d:K_d/T_d|=|A/T_d:H/T_d||H/T_d:K_d/T_d|=|A:H||H/T_d:K_d/T_d|$ which implies that $\deg(\Gamma_{k,d})$ divides $|A:K_d|$, which proves $(4)$.
		
		If $K_d=A$ then $\deg(\Gamma_{k,d})=1$, by $(4)$, which means $\Gamma_{k,d}$  is integral. Conversely, suppose that $\Gamma_{k,d}$ is integral. Since $S_{k,d}$ is a subset of $[a^d]$, \cite[Theorem 7.1]{So} implies $S_{k,d}=[a^d]$. Now $A$ acts transitively on $[a^d]$ and so $A/T_d$ is a regular permutation group on $[a^d]$. Thus $|A/T_d|=|K_d/T_d|$, which means
		$A=K_d$. This completes the proof.
	\end{proof}
	
	As an interesting application of Proposition \ref{action}, one can construct Cayley graphs over cyclic groups with prime algebraic degrees.
	More precisely, for any integer $n\geq 5$ and $n\neq 6$, one can construct a $p$-integral Cayley graph over $\mathbb Z_n$, where $p$ is a prime divisor of $\varphi(n)$. % of valency $\frac{\varphi(n)}{p}$, 

	\begin{core}\label{p}
		Let $n\geq 5$ be an integer.
		\begin{itemize}
			\item[$(1)$] If $\varphi(n)$ is not a power of $2$, then for every odd prime divisor of $\varphi(n)$, there exists an undirected $p$-integral Cayley graph over $\mathbb Z_n$ with valency $\frac{\varphi(n)}{p}$;
			\item[$(2)$] If $\varphi(n)$ is a power of $2$, then there exists an undirected $2$-integral Cayley graph over $\mathbb Z_n$ with valency $\frac{\varphi(n)}{2}$.
		\end{itemize}
		In particular, for every prime $p$ and integer $k\geq 3$, there exists a connected $p$-integral undirected circulant graph of order $p^k$ and valency $p^{k-2}(p-1)$.
	\end{core}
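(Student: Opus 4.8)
I would deduce everything from Proposition~\ref{action} applied with $d=1$. Then $G=\langle a\rangle\cong\mathbb Z_n$, the automorphism group $A\cong\mathbb Z_n^*$ has order $\varphi(n)$, and $T_1=\{1_A\}$, so \emph{every} subgroup $K_1\le A$ is an admissible choice. For such a $K_1$, part~$(3)$ makes $\Gamma_{k,1}$ connected, part~$(1)$ makes it undirected as soon as the inversion map $\tau$ (i.e.\ $-1\in\mathbb Z_n^*$) lies in $K_1$, part~$(2)$ gives valency $|S_{k,1}|=|K_1:T_1|=|K_1|$, and parts~$(4)$–$(5)$ give $\deg(\Gamma_{k,1})\mid |A:K_1|$ with the quotient equal to $1$ only when $K_1=A$. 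Hence the whole corollary reduces to one group-theoretic task: in the finite abelian group $\mathbb Z_n^*$, find a subgroup $K_1$ of prime index $q$ that contains $-1$; then $\Gamma_{k,1}$ is connected, undirected, of valency $\varphi(n)/q$, and its algebraic degree divides $q$ but is not $1$, i.e.\ it is $q$-integral.

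For part~$(1)$ I would take $q$ to be the given odd prime divisor $p$ of $\varphi(n)=|\mathbb Z_n^*|$. The structure theorem for finite abelian groups provides a surjection $\psi\colon\mathbb Z_n^*\to\mathbb Z_p$; put $K_1=\ker\psi$. Since $-1$ has order at most $2$ while $p$ is odd, $\psi(-1)$ has order dividing $\gcd(2,p)=1$, so $\psi(-1)=0$ and $-1\in K_1$ automatically. This $K_1$ has index $p$, and the reduction above finishes part~$(1)$.

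For part~$(2)$ write $\varphi(n)=2^m$. Here the containment $-1\in K_1$ is no longer free, and this is the one place needing care. I would note first that, among $n\ge5$, only $n=6$ has $\varphi(n)<4$; so, excluding $n=6$ (for which the asserted valency $\varphi(6)/2=1$ cannot occur in a connected graph, which is why the surrounding discussion restricts to $n\neq6$), we have $m\ge2$. Next, homomorphisms $\mathbb Z_n^*\to\mathbb Z_2$ that kill $-1$ correspond to nonzero $\mathbb F_2$-linear functionals on $V=\mathbb Z_n^*/(\mathbb Z_n^*)^2$ vanishing on the image $\bar v$ of $-1$; such a functional fails to exist only when $\dim_{\mathbb F_2}V=1$ and $\bar v\ne0$. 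But $\dim V=1$ forces $\mathbb Z_n^*$ cyclic, hence $\mathbb Z_n^*\cong\mathbb Z_{2^m}$ with $-1$ its unique order-$2$ element, and that element is a square exactly when $m\ge2$; so $\bar v\ne0$ would force $m=1$, already excluded. Thus a subgroup $K_1$ of index $2$ with $-1\in K_1$ exists, and the reduction yields the desired $2$-integral graph of valency $\varphi(n)/2$.

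The ``in particular'' statement then follows by specialization. For $n=p^k$ with $k\ge3$: if $p$ is odd, $\varphi(p^k)=p^{k-1}(p-1)$ is divisible by $p$ (as $k-1\ge1$), hence not a power of $2$, and part~$(1)$ yields a connected $p$-integral circulant of order $p^k$ and valency $\varphi(p^k)/p=p^{k-2}(p-1)$; if $p=2$, then $\varphi(2^k)=2^{k-1}$ is a power of $2$ with $k-1\ge2$, so $n=2^k\ne6$, and part~$(2)$ yields a connected $2$-integral circulant of order $2^k$ and valency $2^{k-2}=p^{k-2}(p-1)$. I expect the only real obstacle to be guaranteeing $-1\in K_1$ in part~$(2)$: it is automatic for odd index but a genuine constraint for index~$2$, and it is precisely this constraint that singles out the exceptional value $n=6$.
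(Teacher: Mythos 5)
Your proposal is correct and follows essentially the same route as the paper: both reduce everything to Proposition~\ref{action} with $d=1$ and then construct a prime-index subgroup of $\mathbb{Z}_n^*$ containing $-1$ (the paper gets $\tau\in K$ in part~(1) by the same parity-of-index observation, and in part~(2) takes a maximal subgroup containing $\langle\tau\rangle$ rather than your $\mathbb{F}_2$-functional argument, but these are the same idea). If anything you are more careful than the paper, which overlooks the genuine exception $n=6$ in part~(2) (and misstates the exponent there as $k\geq 3$ instead of $k\geq 2$); your explicit exclusion of $n=6$ is the right call.
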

	\begin{proof}
		Let $A=\mathrm{Aut}(\mathbb Z_n)$ and $\tau$ be the element of $A$ which maps each element to its inverse. 
		Following the notation in Proposition \ref{action} and letting $d=k=1$, we have
		$T_d=T_1=\{1_A\}$.
		
		First suppose that $\varphi(n)$ is not a power of $2$ and $p$ is an odd prime divisor of $\varphi(n)$.
		Since $p\div\varphi(n)$ and $A$ is an abelian group of order $\varphi(n)$, there exists a subgroup $K$ of $A$ such that $|A:K|=p$. 
		Let $K_{1}:=K$, $S_{1,1}:=\{a^\sigma\mid \sigma\in K\}$ and $\Gamma_{1,1}=\mathrm{Cay}(G,S_{1,1})$. 
		Next we will show that $\tau\in K$ and so $S_{1,1}$ is an inverse-closed generating set of $G$ by $(1)$ and $(3)$ of Proposition \ref{action}.
		Put $H:=\langle K,\tau\rangle$.
		If $\tau\notin K$ then $K$ is a proper subgroup of $H$ and $|H:K|=2$.
		Since $p=|A:K|=|A:H||H:K|=2|A:H|$, we have $p=2$ and $A=H$, a contradiction.
		Hence $\tau\in K$.
		Moreover, by Proposition \ref{action} $(2)$, $|S_{1,1}|=|K_1:T_1|=|K|=\frac{\varphi(n)}{p}$ and $\deg(\Gamma_{1,1})=p$. This proves $(1)$.
		
		Next suppose $\varphi(n)$ is a power of $2$, that is, $\varphi(n)=2^k$ for some $k\geq 3$.
		Hence $A$ is an abelian $2$-group of order $2^k$. We know that there exists a maximal subgroup $M\neq 1$ of $A$ containing $\langle \tau\rangle$. Since $A$ is abelian, $|A:M|$ must be a prime, which means $|A:M|=2$ and the second part follows from Proposition \ref{action} as above. 
		
		For every prime $p$ and integer $k\geq 3$, by $(1)$, $(2)$ and the fact $\varphi(p^k)=p^{k-1}(p-1)$, there exists a connected $p$-integral circulant graph of order $p^k$ and valency $p^{k-2}(p-1)$. This completes the proof.
	\end{proof}

		Suppose $G=\langle a\rangle\cong \mathbb{Z}_n$ and keep the notations in Proposition \ref{action}.
		For a prime divisor $p$ of $\varphi(n)$, to construct a $p$-integral Cayley graph over $G$, it suffices to construct a subgroup $K_d$ of $A$ for any given $d\mid n$ such that $|A:K_d|=p$ and $\langle \tau, T_d\rangle\leq K_d$.
		Next we are going to give the element $\tau$ of $A$ which maps $a$ to $a^{-1}$ first.
		Then we give a concrete example to illustrate how to construct the subgroup $K_d$ of $A$ containing $\langle \tau,T_d\rangle$ and $p$-integral Cayley graph over $G$.

		\begin{example}
			Let $G=\langle a\rangle\cong\mathbb Z_{20}$. 
			Then $G=P_1\times P_2$, where $P_1=\langle a^{5}\rangle\cong\mathbb Z_4$ and $P_2=\langle a^4\rangle\cong\mathbb Z_{5}$. 
			Let $A=\mathrm{Aut}(G)$, $A_1=\mathrm{Aut}(P_1)$ and $A_2=\mathrm{Aut}(P_2)$. 
			Then $A_1=\langle \pi_1\rangle\cong\mathbb Z_2$, $A_2=\langle \pi_2\rangle\cong\mathbb Z_{4}$ and $A=A_1\times A_2=\langle \pi_1\rangle\times \langle \pi_2\rangle=\{\pi_2^j, \pi_1\pi_2^j\mid 0\leq j\leq 3\}$, where
			\begin{align*}
				\pi_1:&~P_1\mapsto P_1,~~a^{5}\mapsto a^{15},\\		
				\pi_2:&~P_2\rightarrow P_2,~~~a^4\mapsto a^{12}.
			\end{align*}
			We have $\tau=\pi_1\pi_2^2$.
			For convenience, let $\sigma_j=\pi_2^j$ and $\theta_j=\pi_1\pi_2^j$ for each $0\leq j\leq 3$.
			Then $\tau=\theta_{2}$.
			Moreover, by $a=(a^{5})^{-3}(a^4)^{4}$, we have the following for each $0\leq j\leq 3$,
			\begin{align*}
				\sigma_j:~&G\rightarrow G,~~a\mapsto a^{-15+16\times3^j},\\
				\theta_j:~&G\rightarrow G,~~a\mapsto a^{-5+16\times3^j}.
			\end{align*}
			
			In order to construct an undirected $2$-integral Cayley graph over $G$, it is enough to construct a subgroup $K_d$ of $A$ with index $2$ containing $\langle \tau,T_d\rangle$ for any given $d|n$ by Proposition \ref{action}.
			Clearly, $d\mid n$ if and only if $d=1,2,4,5,10,20$.

			First let $d=1$. Then $T_d=\{1_A\}$. 
			Put $K_1:=\langle \tau,\sigma_2\rangle$. 
			Since $\tau\notin\langle\sigma_2\rangle$, $K_1=\langle\sigma_2\rangle\times\langle\tau\rangle$.
			Note that $|\langle\sigma_2\rangle|=o(\pi_2^2)=2$.
			Then $K_1=\{1,\tau,\sigma_2,\sigma_2\tau\}$, and so $|A:K_1|=2$. 
			Now let
			$1\leq k\leq 19$ be an integer comprime to $20$. Then, by the notations of Proposition \ref{action}, 
			\begin{eqnarray*}
				S_{k,1}=\{a^k,(a^k)^{\sigma_{2}},a^{-k},(a^{-k})^{\sigma_{2}}\}=\{a^k,a^{9k}, a^{-k}, a^{-9k}\}.
			\end{eqnarray*}
			Furthermore, $\mathrm{Cay}(G,S_{k,1})$ is a connected $4$-regular $2$-integral Cayley graph, where
			$S_{1,1}=S_{9,1}=S_{11,1}=S_{19,1}=\{a,a^{19},a^9,a^{11}\}$, $S_{3,1}=S_{7,1}=S_{13,1}=S_{17,1}=\{a^3,a^{17}, a^7,a^{13}\}$.
			
			Now let $d=2$. Then for $1\leq l\leq 20$, we have $(l,20)=2$ if and only if $l=2,6,14,18$. 
			By the above discussion and an easy calculation, we have $T_2=\{1_A,\theta_0\}$. 
			Put $K_2:=\langle\tau,\theta_0\rangle$. Then $|K_2|=4$, $|A:K_2|=2$, $|K_2:T_2|=2$, and
			$\mathrm{Cay}(G,S_{k,2})$, where $k=2,6,14,18$, is a $2$-regular and $2$-integral Cayley graph, where $S_{2,2}=S_{18,2}=\{a^2,a^{18}\}$, $S_{6,2}=S_{14,2}=\{a^6,a^{14}\}$.
			
			Now let $d=4$. Then, since $a^{20}=1$ and no power of $\pi_2$ maps $a^{16}$ to itself, similar to the previous paragraph, we see that $T_4=\{\sigma_0,\theta_0\}$. Hence, similar to the case $d=2$, we conclude that $\mathrm{Cay}(G,S_{k,4})$, where $k=4,8,12,16$ is $2$-integral. More precisely, $S_{4,4}=S_{16,4}=\{a^4,a^{16}\}$ and $S_{8,4}=S_{12,4}=\{a^8,a^{12}\}$.
			
			Now let $d=5$. Since $a^{25}=a^5$ and $a^{20}=1$, similar to the above, we see that $T_5=\{\sigma_0,\sigma_1,\sigma_2,\sigma_3\}$, which means that $\langle\tau,T_5\rangle=A$ and so, by Proposition \ref{action}, the corresponding Cayley graphs are integral. So in this case we can not construct any $2$-integral Cayley graph over $G$.
			
			In the case, $d=10,20$, clearly $T_{d}=A$, which again we can not construct any $2$-integral Cayley graph over $G$. Hence we have shown that if $S$ is one of the following sets, then $\mathrm{Cay}(G,S)$, where $G=\langle a\rangle\cong\mathbb Z_{20}$ is $2$-integral:
			$$\{a,a^{19},a^9,a^{11}\},~\{a^3,a^{17},a^{7},a^{13}\},~\{a^2,a^{18}\},~\{a^6,a^{14}\}, \{a^4,a^{16}\},~\{a^8,a^{12}\}.$$
		\end{example}

	%Notice that $\sigma=(\sigma_1,\ldots,\sigma_k)\in T_d$ if and only if for each $1\leq i\leq k$ and each $1\leq l\leq n$ with $(l,n)=d$, we have $(a_i^l)^{\sigma_i}=a_i^l$.	
	Another important application of Proposition \ref{action} is that it enables us to study the $2$-integral Cayley graphs.
	First we give a characterization of the $2$-integrality of $\mathrm{Cay}(G,S)$, where $\emptyset\neq S\subseteq [g]$ for some $g\in G$.
	\begin{lem}\label{half}
		Let $G$ be a finite  group, $g\in G$ be an element of order $n$, $\varnothing\neq S\subseteq [g]$, maybe not inverse-closed,  and $\Gamma=\mathrm{Cay}(G,S)$. 
		Then $\Gamma$ is $2$-integral if and only if there exists a subgroup $H$ of $\mathrm{Aut}(\langle g\rangle)$ such that $|\mathrm{Aut}(\langle g\rangle):H|=2$, $S=s^H$ for all $s\in S$ and $|S|=\frac{\varphi(n)}{2}$.
	\end{lem}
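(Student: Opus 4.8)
The plan is to push everything down to the cyclic subgroup $\langle g\rangle\cong\mathbb Z_n$ and then read off the answer from the degree formula. By Lemma~\ref{comp} we have $\mathbb{SF}(\mathrm{Cay}(G,S))=\mathbb{SF}(\mathrm{Cay}(\langle g\rangle,S))$, so $\Gamma$ is $2$-integral if and only if $\deg(\mathrm{Cay}(\langle g\rangle,S))=2$. Since $\langle g\rangle$ is abelian of order $n$, Lemma~\ref{lem:abelian-deg} gives $\deg(\mathrm{Cay}(\langle g\rangle,S))=\varphi(n)/|H|$, where $H=\{k\in\mathbb Z_n^*\mid S^k=S\}$. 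Hence $\Gamma$ is $2$-integral exactly when $|H|=\varphi(n)/2$, equivalently $|\mathbb Z_n^*:H|=2$.

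The key step is to reinterpret the condition $S^k=S$ in terms of orbits. I would identify $\mathbb Z_n^*$ with $\mathrm{Aut}(\langle g\rangle)$ via $k\mapsto(x\mapsto x^k)$, so that $H$ becomes a subgroup of $\mathrm{Aut}(\langle g\rangle)$. Since $S\subseteq[g]$ consists of generators of $\langle g\rangle$ and every power map $x\mapsto x^k$ with $(k,n)=1$ permutes the set $[g]$ of generators, $S^k$ is a genuine subset of $[g]$ (the multiset subtlety in Lemma~\ref{lem:abelian-deg} is void here), and the action of $\mathrm{Aut}(\langle g\rangle)$ on $[g]$ is regular, being just the multiplication action of $\mathbb Z_n^*$ on itself. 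Consequently every $H$-orbit on $[g]$ has size exactly $|H|$, and the requirement $S^k=S$ for all $k\in H$ says precisely that $S$ is a union of $H$-orbits on $[g]$.

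With this dictionary both implications are short. For the forward direction, assume $\Gamma$ is $2$-integral, so $|\mathrm{Aut}(\langle g\rangle):H|=2$ and $S$ is a union of $H$-orbits, each of size $|H|=\varphi(n)/2$; thus $|S|\in\{\varphi(n)/2,\varphi(n)\}$. The value $\varphi(n)$ would force $S=[g]$, hence $H=\mathrm{Aut}(\langle g\rangle)$ and $\deg=1$, a contradiction; so $|S|=\varphi(n)/2$ and $S$ is a single $H$-orbit, i.e.\ $S=s^H$ for every $s\in S$. Conversely, given $H\leq\mathrm{Aut}(\langle g\rangle)$ with $|\mathrm{Aut}(\langle g\rangle):H|=2$, $S=s^H$ for all $s\in S$, and $|S|=\varphi(n)/2$: then $S$ is an $H$-orbit, so $H\leq H':=\{k\mid S^k=S\}$; as $H$ has prime index it is maximal, so $H'=H$ or $H'=\mathrm{Aut}(\langle g\rangle)$, and the latter would again give $S=[g]$, contradicting $|S|=\varphi(n)/2<\varphi(n)$. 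Hence $H'=H$ and $\deg(\mathrm{Cay}(\langle g\rangle,S))=\varphi(n)/|H|=2$, so $\Gamma$ is $2$-integral.

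I expect no serious obstacle: the only points requiring care are the multiset-versus-set issue in the definition of $H$ (handled by injectivity of power maps on generators) and excluding the degenerate case $S=[g]$ in both directions, which follows from the maximality of an index-$2$ subgroup together with the transitivity of $\mathrm{Aut}(\langle g\rangle)$ on $[g]$.
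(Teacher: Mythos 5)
Your proof is correct and follows essentially the same route as the paper: reduce to $\mathrm{Cay}(\langle g\rangle,S)$ via Lemma~\ref{comp}, apply the degree formula of Lemma~\ref{lem:abelian-deg}, and exploit the regular action of $\mathrm{Aut}(\langle g\rangle)\cong\mathbb Z_n^*$ on $[g]$ to identify $S$ as a single $H$-orbit. The only (harmless) difference is that you exclude the case $S=[g]$ and settle the converse by direct arguments from the degree formula and the maximality of an index-$2$ subgroup, where the paper instead cites So's Corollary~7.2 and invokes Proposition~\ref{action} with $d=1$.
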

	\begin{proof}
		Suppose $\Gamma$ is $2$-integral. Then $\mathrm{Cay}(\langle g\rangle, S)$ is $2$-integral by Lemma \ref{comp} and $S\neq [g]$ by \cite[Corollay 7.2]{So}. 
		Furthermore, by Lemma \ref{lem:abelian-deg}, $|A:H|=2$, where $A=\mathrm{Aut}(\langle g\rangle)\cong\mathbb Z_n^*$ and $H=\{\sigma\in A\mid S^\sigma=S\}$.
		Since for any $\sigma\in A$ there exists an integer $1\leq i\leq n$ with $(i,n)=1$ such that $g^\sigma=g^i$, $A$ acts transitively on $[g]$. 
		Note that $|A|=|[g]|$, which means
		that $A$ acts regularly on $[g]$. Hence the action of $H$ on each orbit of $H$ is regular. Let $s\in S$. Then $|s^H|=|H|=\frac{|A|}{2}=\frac{\varphi(n)}{2}$. Since $H$ acts semiregularly on $S$, we have $|H|$ divides $|S|$. On the other hand, $s^H\subseteq S\subset [g]$ implies $\frac{\varphi(n)}{2}=|H|=|s^H|\leq |S|<|[g]|=\varphi(n)$ and so $S=s^H$.
		
		Conversely, suppose that $S=s^H$ for some $s\in S$, where $H$ is a subgroup of $\mathrm{Aut}(\langle g\rangle)$ such that $|\mathrm{Aut}(\langle g\rangle):H|=2$ and $|S|=\frac{\varphi(n)}{2}$. 
		Let $\Sigma=\mathrm{Cay}(\langle g\rangle, S)$. Then, by Proposition \ref{action} (putting $d=1$), $\deg(\Sigma)=2$. Now Lemma \ref{comp} implies that $\Gamma$ is $2$-integral as desired.
	\end{proof}
	
	By Lemma \ref{half}, we can characterize the generating sets of  $2$-integral normal Cayley graphs.
	
	%	\begin{lem}\label{two}
		%		Let $G$ be  a finite abelian group, $S=S_1\cup S_2$, where $S_1$ and $S_2$ are two disjoint inverse-closed subset of $G$, where $S_i\subseteq [g_i]$. Let $\Gamma=\mathrm{Cay}(G,S)$ and $\Gamma_i=\mathrm{Cay}(G,S_i)$, where $i=1,2$. Then $\deg(\Gamma)=2$ if and only if one of the following holds:
		%		\begin{itemize}
			%			\item[(1)] $\Gamma_1$ is integral, $\deg(\Gamma_2)=2$. In this case $\mathbb{SF}(\Gamma_2)=\mathbb {SF}(\Gamma)$,
			%			\item[(2)] $\deg(\Gamma_1)=\deg(\Gamma_2)=2$ and $\mathbb{SF}(\Gamma_1)=\mathbb{SF}(\Gamma_2)$. In this case, $\mathbb{SF}(\Gamma)=\mathbb{SF}(\Gamma_1)=\mathbb{SF}(\Gamma_2)$.
			%		\end{itemize}
		%		\end{lem}
	%		\begin{proof}
		%			Since for each irreducible character $\chi$ of $G$, we have $\chi(S)=\chi(S_1)+\chi(S_2)$, and each eigenvalue of $\mathrm{Cay}(G,X)$ is of the form $\chi(X)$ for some %$\chi\in\mathrm{Irr}(G)$, where $X\in\{S,S_1,S_2\}$, if $\deg(\Gamma)=2$ then $\mathbb{SF}(\Gamma)=\mathbb{Q}(\alpha)$ for some $\alpha^2\in\mathbb Z$, and by Lemma \ref{union} one %of the cases $(1)$ or $(2)$ holds. Notice that if $\Gamma_1$ and $\Gamma_2$ are both integral, then $\Gamma$ is integral which is a contradiction.
		%			The converse direction is clear.
		%		\end{proof}
	\begin{core}\label{asli}
		Let $G$ be a finite group, $\Omega(G)=\{[g_1],\ldots,[g_n]\}$, $S$ is a normal subset of $G$ and $\Gamma=\mathrm{Cay}(G,S)$.
		Then $S=\bigcup_{i=1}^{n}S_i$, where $S_i=[g_i]\cap S$ for each $1\leq i\leq n$.
		If $\Gamma$ is $2$-integral, then for $S_i\neq \varnothing$,
		\begin{itemize}
			\item [$(1)$]  there exists $s_i\in S$ such that $S_i=[s_i]$ or $S_i=s_i^H\subset [s_i]$ for some subgroup $H$ of $\mathrm{Aut}(\langle s_i\rangle)$ of index $2$, and $|S_i|=\frac{\varphi(n_i)}{2}$, where $n_i=o(s_i)$;
			\item[$(2)$] there exists $1\leq i_0\leq n$ such that $S_{i_0}$ is of the latter form. 
		\end{itemize}
	\end{core}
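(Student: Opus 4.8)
The plan is to deduce Corollary~\ref{asli} from Lemma~\ref{union}, Lemma~\ref{half}, and Lemma~\ref{comp}, exploiting the fact that $\deg(\Gamma)=2$ is extremely restrictive on each piece $\Gamma_i=\mathrm{Cay}(G,S_i)$. First I would record the decomposition: since $\Omega(G)=\{[g_1],\dots,[g_n]\}$ is a partition of $G$ (as noted in Section~2), the sets $S_i=[g_i]\cap S$ partition $S$, so $S=\bigcup_{i=1}^n S_i$; this needs no hypothesis on $\Gamma$. The content is in (1) and (2), where $\Gamma$ is assumed $2$-integral.

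For part (1), fix an index $i$ with $S_i\neq\varnothing$, pick any $s_i\in S_i$, and set $n_i=o(s_i)$; note $S_i\subseteq[g_i]=[s_i]$. By Lemma~\ref{union}, $\deg(\Gamma_i)$ divides $\deg(\Gamma)=2$, so $\deg(\Gamma_i)\in\{1,2\}$. By Lemma~\ref{comp}, $\deg(\Gamma_i)=\deg(\mathrm{Cay}(\langle s_i\rangle,S_i))$, and since $S_i\subseteq[s_i]$ with $\langle S_i\rangle=\langle s_i\rangle$, So's theorem \cite[Theorem 7.1]{So} (together with \cite[Corollary 7.2]{So}) tells us that $\deg(\Gamma_i)=1$ holds if and only if $S_i=[s_i]$. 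Thus either $S_i=[s_i]$, or $\deg(\Gamma_i)=2$; in the latter case Lemma~\ref{half} applied to the element $s_i$ of order $n_i$ gives a subgroup $H\leq\mathrm{Aut}(\langle s_i\rangle)$ of index $2$ with $S_i=s_i^H\subsetneq[s_i]$ and $|S_i|=\varphi(n_i)/2$. This is exactly the dichotomy claimed in (1).

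For part (2), I argue by contradiction: suppose every nonempty $S_i$ is of the former type, i.e.\ $S_i=[g_i]$. Then $S=\bigcup_{i\in I}[g_i]$ is a union of full classes $[g_i]$ from the partition $\Omega(G)$. I would like to conclude that $\deg(\Gamma)=1$, contradicting $2$-integrality. The cleanest route is to invoke Lemma~\ref{lem:normal-deg}: with $m$ the exponent of $G$, we must show $S^k=S$ for every $k\in\mathbb{Z}_m^*$. If $k\in\mathbb{Z}_m^*$ and $g\in S$, then $g$ lies in some $[g_i]\subseteq S$; since $o(g)\mid m$ we have $(k,o(g))=1$, so $\langle g^k\rangle=\langle g\rangle$, hence $g^k\in[g]=[g_i]\subseteq S$. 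Therefore $S^k\subseteq S$, and since $k$ is invertible mod $m$ (apply $k^{-1}$) we get $S^k=S$. Thus $H'=\mathbb{Z}_m^*$ and $\deg(\Gamma)=\varphi(m)/|H'|=1$, a contradiction. Hence at least one nonempty $S_{i_0}$ must be of the latter form, proving (2).

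The main obstacle is bookkeeping rather than depth: one must be careful that Lemma~\ref{half} is stated for $\mathrm{Cay}(G,S)$ with $S\subseteq[g]$ (not just for the cyclic ambient group), so applying it to $\Gamma_i=\mathrm{Cay}(G,S_i)$ is legitimate and already packages the Lemma~\ref{comp} reduction; and one must check that the ``integral" case of So's theorem really forces $S_i$ to be the \emph{entire} class $[s_i]$ and not some other union of $G_{n_i}(e)$'s — but since $S_i$ is constrained to lie inside the single class $[s_i]=G_{n_i}(n_i/n_i')$ (all elements of order $n_i$), the only integral possibility is $S_i=[s_i]$ itself. With these two points verified, parts (1) and (2) follow formally as above.
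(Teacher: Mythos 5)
Your proposal is correct and follows essentially the same route as the paper: decompose $S$ via $\Omega(G)$, use Lemma~\ref{union} to force $\deg(\Gamma_i)\in\{1,2\}$, identify the integral case with $S_i=[s_i]$ (via So's theorem, where the paper cites the Alperin--Peterson analogue), apply Lemma~\ref{half} in the degree-$2$ case, and rule out ``all pieces integral'' by contradiction. The only (harmless) deviation is in part (2), where you verify directly from Lemma~\ref{lem:normal-deg} that a union of full classes $[g_i]$ yields $\deg(\Gamma)=1$, whereas the paper simply cites \cite[Proposition 4.1]{AP} for this fact.
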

	\begin{proof}
		Suppose $\Gamma$ is $2$-integral. 
		By Lemma \ref{union}, $S=\bigcup_{i=1}^{n}S_i$ is a disjoint union of sets $S_1,\cdots,S_n$, where $S_i=[g_i]\cap S$ for each $1\leq i\leq n$, and if $S_i\neq \varnothing$, $\deg(\Gamma_i)=1$ or $2$, where $\Gamma_i=\mathrm{Cay}(G,S_i)$. 
		For $S_i\neq \varnothing$, $\deg(\Gamma_i)=1$ if and only if $S_i=[g_i]$ by \cite[Corollary 7.2]{AP}, and $\deg(\Gamma_i)=2$ if and only if  there exists
		a subgroup $H$ of $\mathrm{Aut}(\langle g_i\rangle)$ such that $|\mathrm{Aut}(\langle g_i\rangle):H|=2$, $S_i=s^H$ for all $s\in S_i$ and $|S_i|=\frac{\varphi(o(g_i))}{2}$ by Lemma \ref{half}. 
		If for each $i$ with $S_i\neq \varnothing$, we have	$\deg(\Gamma_i)=1$, then $S$ is a union of $[g_i]$s and so $\deg(\Gamma)=1$, by \cite[Proposition 4.1]{AP}, which contradicts  the $2$-integrality of $\Gamma$. 
		Hence there exists $i_0$ such
		that $\deg(\Gamma_{i_0})=2$. 
	\end{proof}
	%For each $j$ that $\deg(\Gamma_j)=2$, we have
	%$\mathbb{SF}(\Gamma_j)=\mathbb{Q}(\sqrt{m_j})$ for some non-square positive integer $m_j$. Since, by our assumption, $\deg(\Gamma)=2$, also we have $\mathbb{SF}(\Gamma)=\mathbb Q(\sqrt{m})$ for some non-square positive integer $m$. Let $\lambda$ be an eigenvalue of $\Gamma$. Then $\lambda=\frac{\chi(S)}{\chi(1)}$ for some irreducible character $\chi$ of $G$. On the other hand, $\chi(S)=\chi(S_1)+\cdots+\chi(S_k)$ and $\frac{\chi(S_i)}{\chi(1)}$ is an eigenvalue of $\Gamma_i$. This implies that for each $j$ with $\deg(\Gamma_j)=2$, we have $m_j=m$.
	
	%Conversely, suppose that $(1)$ and $(2)$ hold. Then $S=S_1\cup\cdots\cup S_k$ with the conditions given in $(1)$ and $(2)$. Since there exists a non-square positive integer $m$ such that for each $i$, $\mathbb{SF}(\Gamma_i)=\mathbb Q$ or $\mathbb Q(\sqrt{m})$ and for at least one $i$, we have $\mathbb{SF}(\Gamma_i)=\mathbb Q(\sqrt{m})$, and every
	%	eigenvalue of $\Gamma$ is a sum of eigenvalues of $\Gamma_i$s, we conclude that $\mathbb{SF}(\Gamma)=\mathbb{Q}(\sqrt{m})$, which means $\deg(\Gamma)=2$, as desired.
	%\end{proof}

	\section{Finite abelian groups admitting a connected $2$-integral undirected Cayley graph with  small valency}
	Recall that a cyclic group $G=\langle a\rangle$ of order $n$ admits a connected integral undirected Cayley graph of valency $2$ if and only if $n=3,4,6$ \cite[Lemma 2.7]{AV}. Also recall that an undirected Cayley graph $\mathrm{Cay}(G,S)$ over
	an abelian group $G$ is integral if and only if $S$ is a union of some $[g]$s, where $g\in S$ \cite{Alprin,AP}.  If $X$ be a non-empty subset of a group $G$ and $\chi$ be a character of $G$, we set $\chi(X)=\sum_{x\in X}\chi(x)$. It is well-konw that if $S$ be a conjugate-closed subset of a group $G$ and $\Gamma=\Cay(G,S)$, then eigenvalues of $\Gamma$ are $\frac{\chi(S)}{\chi(1)}$ with multiplicity $\chi(1)^2$, where $\chi$ runs over $\mathrm{Irr}(G)$,  see \cite[Theorem 1]{Z} or \cite{Ito}. In this section, we use this fact frequently.
	
	Let $\mathcal{G}_k$ be the set of all finite groups admitting a connected $2$-integral undirected Cayley graph  with valency $k$. In this section, we completely characterize all finite abelian groups belong to $\mathcal{G}_k$ for $2\leq k\leq 5$.
	
	\subsection{$\mathcal{G}_2$}
	\begin{thm}\label{2}
		Let $G$ be a finite abelian group. Then $G\in\mathcal{G}_2$ if and only if $G\cong\mathbb Z_n$, where $n=5,8,10,12$.
		Furthermore, a cycle with order $n$ is $2$-integral if and only if $n=5,8,10,12$.
	\end{thm}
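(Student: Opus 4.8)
The strategy is to combine two reductions available in the excerpt. First, an abelian Cayley graph of valency $2$ with connection set $S$ must have $|S|=2$ and $S=S^{-1}$, so either $S=\{s,s^{-1}\}$ with $s\ne s^{-1}$, or $S=\{s,t\}$ with $s=s^{-1}$ and $t=t^{-1}$ (both involutions). In the first case $\mathrm{Cay}(G,S)$ is a single cycle on the vertices reachable from $1$, i.e. connectedness forces $G=\langle s\rangle\cong\mathbb Z_n$ and the graph is the $n$-cycle $C_n$; in the second case $\langle s,t\rangle$ has exponent $2$, so connectedness forces $G\cong\mathbb Z_2$ or $\mathbb Z_2\times\mathbb Z_2$, whose Cayley graphs ($K_2$, or $C_4$) are integral, not $2$-integral. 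Hence the whole problem reduces to: for which $n$ is $C_n$ a $2$-integral graph, and this also proves the ``furthermore'' sentence simultaneously.

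Next I would pin down the algebraic degree of $C_n=\mathrm{Cay}(\mathbb Z_n,\{a,a^{-1}\})$ using Lemma \ref{lem:abelian-deg}: $\deg(C_n)=\varphi(n)/|H|$ where $H=\{k\in\mathbb Z_n^*\mid \{k,-k\}=\{1,-1\}\bmod n\}=\{1,-1\}$ (as a subgroup of $\mathbb Z_n^*$, using that $S=\{a,a^{-1}\}$ so $S^k=\{a^k,a^{-k}\}$ equals $S$ iff $k\equiv\pm1$). For $n\geq 3$ we have $|H|=2$, so $\deg(C_n)=\varphi(n)/2$. Therefore $C_n$ is $2$-integral exactly when $\varphi(n)=4$, i.e. $n\in\{5,8,10,12\}$. (For $n\leq 4$ one checks directly: $C_3=K_3$ and $C_4$ are integral, so they are excluded, consistent with $\varphi(n)/2<2$.) This yields both halves of the theorem: $G\in\mathcal G_2$ iff $G\cong\mathbb Z_n$ with $n\in\{5,8,10,12\}$, and $C_n$ is $2$-integral iff $n\in\{5,8,10,12\}$.

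Alternatively, and perhaps more in line with the paper's Corollary \ref{asli}, one can argue: if $\mathrm{Cay}(G,S)$ with $G$ abelian is $2$-integral and $|S|=2$, then by Corollary \ref{asli} there is a part $S_{i_0}=s^{H}\subsetneq[s]$ with $|S_{i_0}|=\varphi(o(s))/2$; since $|S|=2$ this forces $\varphi(o(s))/2\in\{1,2\}$, and combined with $S_{i_0}\subseteq S$ and connectedness $G=\langle S\rangle$ one deduces $G$ is cyclic of order $n=o(s)$ with $\varphi(n)\in\{2,4\}$. The case $\varphi(n)=2$ (i.e. $n\in\{3,4,6\}$) must be ruled out because then $[s]$ itself has size $2$ and $S=[s]$ would make the graph integral, not $2$-integral; so $\varphi(n)=4$, giving $n\in\{5,8,10,12\}$. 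One then checks each of these four values does produce a $2$-integral $C_n$ (e.g. $C_5$ has eigenvalues $2\cos(2\pi j/5)$, lying in $\mathbb Q(\sqrt5)$).

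**Main obstacle.** There is no serious obstacle; the only thing requiring a little care is the ``degenerate'' small cases. One must explicitly separate $n\in\{3,4,6\}$ (where $\varphi(n)=2$, so $C_n$ is integral) from $n\in\{5,8,10,12\}$, and one must verify that no non-cyclic abelian group sneaks in via a two-involution connection set — this is handled by the exponent-$2$ remark above, noting $C_4$ over $\mathbb Z_2^2$ is integral. Everything else is a direct application of Lemma \ref{lem:abelian-deg} (or Corollary \ref{asli}) plus the elementary fact that $\varphi(n)=4$ has exactly the solutions $n=5,8,10,12$.
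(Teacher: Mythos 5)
Your proposal is correct and follows essentially the same route as the paper: the same case split into $S=\{s,s^{-1}\}$ versus two involutions, elimination of the involution case as giving only integral graphs over $\mathbb Z_2^2$, and reduction of the remaining case to $\varphi(n)=4$, hence $n\in\{5,8,10,12\}$. The only cosmetic difference is that you apply Lemma \ref{lem:abelian-deg} directly to compute $\deg(C_n)=\varphi(n)/2$, whereas the paper routes both directions through Lemma \ref{half}, which is itself built on that same lemma.
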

	\begin{proof}
		Let $S=\{x,y\}$ be an inverse-closed generating set of $G$ and $\Gamma=\mathrm{Cay}(G,S)$ is $2$-integral. Then either $x^2=y^2=1$ or $y=x^{-1}$. In the first case, $S=S_1\cup S_2$, where
		$S_1=\{x\}=[x]$, $S_2=\{y\}=[y]$, which means $\mathrm{Cay}(G,S_i)$, $i=1,2$, are both integral which contradicts Corollary \ref{asli}. In the later case, we have $S\subset [x]$ and $G=\langle x\rangle$. Let $n=o(x)$. Then, Lemma \ref{half} implies that $\varphi(n)=4$ and so $n=5,8,10,12$. 
		
		Conversely, suppose that $G=\langle x\rangle\cong\mathbb Z_n$, where $n=5,8,10,12$. Let $A=\mathrm{Aut}(G)$ and put $H=\langle \tau\rangle$. Then in each case $|A:H|=2$ and $|S|=\frac{\varphi(n)}{2}$, where $S=x^H$.  Hence $\mathrm{Cay}(G,S)$ is $2$-integral, by Lemma \ref{half}.
	\end{proof}
	
	\subsection{$\mathcal{G}_3$}
	To classify all finite abelian groups in $\mathcal{G}_3$, we need the following general result.
	\begin{lem}\label{+1}
		Let $G$ be a finite group and $S$ be a normal subset of $G$.  Then for any $1\neq x\in Z(G)$ in which $S\cap [x]=\varnothing$, $\mathrm{Cay}(G,S)$ and $\mathrm{Cay}(G,S\cup [x])$ have the same splitting field. In particular, if $x\in Z(G)\setminus S$ is an involution, then $\mathrm{Cay}(G,S)$ and $\mathrm{Cay}(G,S\cup \{x\})$ have the same splitting field.
	\end{lem}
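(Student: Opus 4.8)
The plan is to work with the normal (conjugate-closed) structure of $S$ and use the character-theoretic description of eigenvalues recalled just before this section, namely that the eigenvalues of $\Cay(G,S)$ are $\chi(S)/\chi(1)$ as $\chi$ runs over $\Irr(G)$, each with multiplicity $\chi(1)^2$. The key point is that $x\in Z(G)$, so the conjugacy class of $x$ is $\{x\}$ and $[x]$ is a union of central conjugacy classes; in particular $S\cup[x]$ is again a normal subset whenever $S\cap[x]=\varnothing$, so $\Cay(G,S\cup[x])$ is well-defined and its eigenvalues are $\chi(S\cup[x])/\chi(1)=(\chi(S)+\chi([x]))/\chi(1)$. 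Thus I must understand $\chi([x])$ for each irreducible $\chi$.

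First I would recall that for $x\in Z(G)$ and $\chi\in\Irr(G)$, $\chi(x)=\omega_\chi(x)\chi(1)$ where $\omega_\chi(x)$ is a root of unity (the central character value), because $x$ acts as a scalar $\omega_\chi(x)I$ on an irreducible representation affording $\chi$. Moreover $\langle x\rangle$ is central and cyclic, and $[x]=\{h:\langle h\rangle=\langle x\rangle\}$ consists of the generators of $\langle x\rangle$; each such $h$ is central, so $\chi(h)=\omega_\chi(h)\chi(1)$. Hence $\chi([x])/\chi(1)=\sum_{h\in[x]}\omega_\chi(h)$, which is a sum of roots of unity of order dividing $o(x)$ — in fact it is a Ramanujan-type sum: writing $o(x)=m$ and restricting the linear character $h\mapsto\omega_\chi(h)$ of $\langle x\rangle$, this sum is $\sum_{(k,m)=1}\zeta^{k}$ for an appropriate $m'$-th root of unity $\zeta$ depending on $\chi$, and in any case it is a rational integer (it is a generalized Ramanujan sum $c_m$-type quantity, a sum of a primitive-root-indexed Galois orbit of roots of unity, hence fixed by $\Gal(\mathbb{Q}(\zeta_m)/\mathbb{Q})$ and therefore in $\mathbb{Z}$). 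Therefore $\chi([x])/\chi(1)\in\mathbb{Z}$ for every $\chi$.

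The conclusion then follows: the eigenvalues of $\Cay(G,S\cup[x])$ are obtained from those of $\Cay(G,S)$ by adding, eigenvalue-by-eigenvalue (indexed by $\chi$), an integer. Adding an integer to each eigenvalue does not change the field generated by the eigenvalues over $\mathbb{Q}$: if $\lambda_\chi=\chi(S)/\chi(1)$ then $\lambda_\chi+n_\chi\in\mathbb{Q}(\{\lambda_\chi\})$ and conversely $\lambda_\chi=(\lambda_\chi+n_\chi)-n_\chi\in\mathbb{Q}(\{\lambda_\chi+n_\chi\})$, so the two splitting fields coincide. For the ``in particular'' statement, if $x\in Z(G)\setminus S$ is an involution then $\langle x\rangle\cong\mathbb{Z}_2$ and $[x]=\{x\}$, and the hypothesis $S\cap[x]=\varnothing$ is just $x\notin S$; the general case applies verbatim, with $\chi(x)/\chi(1)=\pm1\in\mathbb{Z}$.

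I expect the main obstacle to be the clean verification that $\chi([x])/\chi(1)$ is a rational \emph{integer} rather than merely a rational number or an algebraic integer — i.e., pinning down that the relevant sum of roots of unity is Galois-stable. The way I would handle this is to note that $\operatorname{Gal}(\mathbb{Q}(\zeta_m)/\mathbb{Q})$ permutes $[x]$ (it permutes generators of $\langle x\rangle$) and permutes the linear characters of $\langle x\rangle$ compatibly, so the value $\sum_{h\in[x]}\omega_\chi(h)$ is fixed by this Galois action; being an algebraic integer fixed by $\operatorname{Gal}(\mathbb{Q}(\zeta_m)/\mathbb{Q})$, it lies in $\mathbb{Z}$. (Concretely it equals $\mu(m/\gcd(m,r))\,\varphi(m)/\varphi(m/\gcd(m,r))$ for suitable $r$, the classical Ramanujan sum formula, but the Galois argument avoids any casework.) Everything else is bookkeeping with the character formula for eigenvalues of normal Cayley graphs.
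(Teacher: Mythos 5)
Your proposal is correct and follows essentially the same route as the paper: both use the fact that for a normal connection set the eigenvalues are $\chi(S)/\chi(1)$, that central elements act by scalars so $\chi([x])/\chi(1)=\sum_{(i,n)=1}\lambda^i$ for a root of unity $\lambda$, and that this Galois-stable sum of roots of unity is a rational integer, whence each eigenvalue shifts by an integer and the splitting field is unchanged. The only cosmetic difference is that the paper cites a textbook lemma (James--Liebeck, Lemma 22.15) for the integrality of that sum, where you supply the standard Galois-orbit argument directly.
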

	\begin{proof}
		Let $\chi\in\mathrm{Irr}(G)$ and $1\neq x\in Z(G)$ such that $S\cap [x]=\varnothing$. Since $x\in Z(G)$, \cite[Exercise 5 of Chapter 13]{JL} implies that for each $i$ we have 
		$\chi(x^i)=\lambda^i\chi(1)$, where $\lambda$ is an $n$th root of unity and $n=o(x)$. Hence $\frac{\chi([x])}{\chi(1)}=\sum_{1\leq i\leq n,\\ (i,n)=1}\lambda^i$ is an integer, say $t$, by \cite[Lemma 22.15]{JL}. Since $\frac{\chi(S\cup [x])}{\chi(1)}=\frac{\chi(S)+\chi([x])}{\chi(1)}=\frac{\chi(S)}{\chi(1)}+t$, clearly the splitting field of
		$\mathrm{Cay}(G,S)$ and $\mathrm{Cay}(G,S\cup [x])$ are the same, as desired.
	\end{proof}
	
	\begin{thm}\label{3}
		Let $G$ be a finite abelian group. Then $G\in\mathcal{G}_3$ if and only if $G$ is isomorphic to $\mathbb Z_n$ or $\mathbb Z_n\times\mathbb Z_2$, where $n=8,10,12$.
		Furthermore, a connected cubic abelian undirected Cayley graph $\Gamma$ is $2$-integral if and only if it is isomorphic to one of the  following $6$ Cayley graphs $\mathrm{Cay}(G,S)$, where:
		\begin{itemize}
			\item[$\romannumeral1$.] $G=\langle x\rangle \cong \mathbb{Z}_n$ where $n=8,10,12$ and $S=\{x,x^{-1},x^{\frac{n}{2}}\}$;
			\item[$\romannumeral2$.] $G=\langle x\rangle \times \langle y\rangle\cong \mathbb{Z}_n\times \mathbb{Z}_2$ where $n=8,10,12$ and $S=\{x,x^{-1},y\}$.
		\end{itemize}
	\end{thm}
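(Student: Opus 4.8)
The plan is to first pin down the shape of an inverse-closed generating set $S$ with $|S|=3$, then run the block decomposition of Corollary \ref{asli}, and finally establish sufficiency via Lemmas \ref{half} and \ref{+1}. Since $S=S^{-1}$ and $|S|=3$ is odd, $S$ must contain an involution, so either $S=\{u,u^{-1},v\}$ with $u\neq u^{-1}$ (so $o(u)=m\ge 3$) and $v^2=1$, or $S$ consists of three involutions. In the second case every class $[t]$ with $t\in S$ is a singleton $\{t\}=[t]$, so in the partition $S=\bigcup_i S_i$ of Corollary \ref{asli} every nonempty block is of the integral form $[t]$; then no block is of the index-$2$ form, contradicting part $(2)$ of Corollary \ref{asli}. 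Hence $S=\{u,u^{-1},v\}$.

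Next I would determine $G=\langle u,v\rangle$. As $G$ is abelian and $v$ an involution, either $v\in\langle u\rangle$, whence $m$ is even, $v=u^{m/2}$ and $G=\langle u\rangle\cong\mathbb Z_m$; or $v\notin\langle u\rangle$, whence $\langle u\rangle\cap\langle v\rangle=1$ and $G=\langle u\rangle\times\langle v\rangle\cong\mathbb Z_m\times\mathbb Z_2$. In both cases $v\notin[u]$ and $u,u^{-1}\notin[v]$, so under $\Omega(G)$ we get the disjoint decomposition $S=\{u,u^{-1}\}\cup\{v\}$ with $\{v\}=[v]$. Since $\mathrm{Cay}(G,\{v\})$ is integral, Corollary \ref{asli}$(2)$ forces $\{u,u^{-1}\}$ to be the non-integral block, i.e.\ $\{u,u^{-1}\}=u^{H}$ for a subgroup $H\le\mathrm{Aut}(\langle u\rangle)$ of index $2$ with $|u^{H}|=\varphi(m)/2$; thus $\varphi(m)=4$, so $m\in\{5,8,10,12\}$. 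In the cyclic case $m$ is even, hence $m\in\{8,10,12\}$; in the direct-product case all four values remain but $\mathbb Z_5\times\mathbb Z_2\cong\mathbb Z_{10}$. Collecting the possibilities gives exactly $G\cong\mathbb Z_n$ or $\mathbb Z_n\times\mathbb Z_2$ with $n\in\{8,10,12\}$.

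For sufficiency, for each such $G$ I would take $u$ a generator of the $\mathbb Z_m$-factor, $v$ the relevant involution ($u^{m/2}$ in the cyclic case, the generator of $\mathbb Z_2$ otherwise), and $H=\langle\tau\rangle\le\mathrm{Aut}(\langle u\rangle)$, so that $\{u,u^{-1}\}=u^{H}$ has size $2=\varphi(m)/2$ and $[\mathrm{Aut}(\langle u\rangle):H]=2$. Lemma \ref{half} then gives that $\mathrm{Cay}(\langle u\rangle,\{u,u^{-1}\})$, equivalently $\mathrm{Cay}(G,\{u,u^{-1}\})$, is $2$-integral; since $v$ is a central involution outside $\{u,u^{-1}\}$, Lemma \ref{+1} shows that adjoining $v$ preserves the splitting field, so $\mathrm{Cay}(G,\{u,u^{-1},v\})$ is $2$-integral and it is connected because $u\in\langle S\rangle$. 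This settles $\mathcal{G}_3$, and simultaneously one reads off the graphs: the $v\in\langle u\rangle$ branch yields $G\cong\mathbb Z_n$ with $S=\{x,x^{-1},x^{n/2}\}$ (item~(i)), and the $v\notin\langle u\rangle$ branch yields $G\cong\mathbb Z_n\times\mathbb Z_2$ with $S=\{x,x^{-1},y\}$ (item~(ii)).

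I expect the main obstacle to lie in the graph-level enumeration of the ``furthermore'' part: one must verify that inside each admissible group every choice of $(u,v)$ producing a $2$-integral cubic Cayley graph is equivalent under $\mathrm{Aut}(G)$ to the single listed representative, and that the listed representatives are pairwise non-isomorphic. The first point reduces to the transitivity of $\mathrm{Aut}(\langle u\rangle)$ on generators of $\langle u\rangle$ and the fact that for these small $m$ the only index-$2$ subgroup of $\mathrm{Aut}(\langle u\rangle)$ containing $\tau$ is $\langle\tau\rangle$, together with uniqueness of the relevant involution; the non-isomorphism is handled by a vertex count, refined by bipartiteness and girth among graphs of equal order. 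The genuinely delicate spot is the value $m=5$ in the direct-product branch, where $\mathbb Z_5\times\mathbb Z_2\cong\mathbb Z_{10}$ makes the ambient group cyclic even though the connection set is \emph{not} of the form in item~(i); keeping careful track of this overlap is exactly what is needed to match the group list against the stated list of graphs.
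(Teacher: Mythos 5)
Your proposal is correct and follows essentially the same route as the paper: the same three-way case analysis on the shape of the inverse-closed triple (all involutions; $\{u,u^{-1},u^{m/2}\}$; $\{u,u^{-1},v\}$ with $v\notin\langle u\rangle$), the same use of Corollary \ref{asli} to force $\varphi(m)=4$, and the same sufficiency argument via Lemmas \ref{half} and \ref{+1}. If anything, your explicit tracking of the $m=5$ branch in the direct-product case (where $\mathbb{Z}_5\times\mathbb{Z}_2\cong\mathbb{Z}_{10}$ yields the pentagonal prism, a connected cubic $2$-integral Cayley graph not literally of the forms (i) or (ii)) is more careful than the paper, which obtains $n=5,8,10,12$ in its Case III but silently drops $n=5$ from the final list of graphs.
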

	
	\begin{proof}
		Let $S=\{x,y,z\}$ be an inverse-closed generating set of $G$ and $\Gamma=\mathrm{Cay}(G,S)$ is $2$-integral. We deal with the following cases:
		
		\textbf{Case I.} $x^2=y^2=z^2$. In this case, $S=S_1\cup S_2\cup S_3$, where $S_1=[x]$, $S_2=[y]$ and $S_3=[y]$, which contradicts Corollary \ref{asli}.
		
		\textbf{Case II.} $y=x^{-1}$ and $z=x^{\frac{n}{2}}$, where $o(x)=n$ is even. Then $G=\langle x\rangle$ and $S=S_1\cup S_2$, where $S_1=\{x,x^{-1}\}\subseteq [x]$ and
		$S_2=[x^{\frac{n}{2}}]$. By Corollary \ref{asli}, $4=\varphi(n)$ which means $n=8,10,12$.
		Hence in this case $G=\langle x \rangle \cong \mathbb Z_n$, where $n=8,10,12$.
		
		\textbf{Case III.} $y=x^{-1}$, $z^2=1$ and $z\notin\langle x\rangle$. In this case, $G=\langle x\rangle\times\langle z\rangle\cong\mathbb Z_n\times\mathbb Z_2$, where $n=o(x)$. Furthermore, $S=S_1\cup S_2$, where $S_1=\{x,x^{-1}\}\subset [x]$ and $S_2=[z]$. Similar to the Case II, we get $\varphi(n)=4$ and so $n=5,8,10,12$.
		
		Hence we have showed that if $\Gamma=\mathrm{Cay}(G,S)$ is $2$-integral, then $G=\langle x\rangle \cong\mathbb Z_n$ and $S=\{x,x^{-1},x^{\frac{n}{2}}\}$ or $G=\langle x\rangle\times\langle z\rangle\cong\mathbb Z_n\times\mathbb Z_2$ and $S=\{x,x^{-1},z\}$, where $n=8,10,12$. This proves one direction. 
		
		For the converse direction, first we suppose that $G=\langle x\rangle \cong\mathbb Z_n$ and $S=S_1\cup S_2$, where $S_1=\{x,x^{-1}\}\subseteq [x]$, $S_2=\{x^{\frac{n}{2}}\}=[x^{\frac{n}{2}}]$ and $n=8,10,12$.
		Note that $S_1\cap [x^{\frac{n}{2}}]=\varnothing$, then $\mathbb{SF}(\mathrm{Cay}(G,S))=\mathbb{SF}(\mathrm{Cay}(G,S_1))$ by Lemma \ref{+1}.
		Use the same argument as in the proof of Theorem \ref{2}, we have $\mathrm{Cay}(G,S_1)$ is $2$-integral and so $\mathrm{Cay}(G,S)$ is $2$-integral. Hence $G\cong\mathbb Z_n\in \mathcal{G}_3$, where $n=8,10,12$.
		Similar arguments show the desired results for the case $G\cong\mathbb Z_n\times\mathbb Z_2$, where $n=8,10,12$. 
	\end{proof}
	
	\subsection{$\mathcal{G}_4$}
	To characterize all finite abelian groups belong to $\mathcal{G}_4$, we need the following general result:
	\begin{lem}\label{induced}
		Let $G$ be a finite group, $S_1,\ldots,S_k$ be normal subsets of $G$, $\langle S_i\rangle\cap\langle S_j\rangle=\{1\}$ for all distinct $i,j$, and $S=S_1\cup\cdots\cup S_k$. Let $\Gamma=\mathrm{Cay}(G,S)$ and  $\Gamma_i=\mathrm{Cay}(G,S_i)$, $i=1,\ldots,k$. Then
		\begin{itemize}
			\item[$(1)$] $\mathbb{SF}(\Gamma_i)\subseteq\mathbb{SF}(\Gamma)$ for all $i$,
			\item[$(2)$] $\deg(\Gamma_i)$ divides $\deg(\Gamma)$ for all $i$,
			\item[$(3)$] if $\deg(\Gamma_i)=\deg(\Gamma)=2$ for some $i$, then $\mathbb{SF}(\Gamma)=\mathbb{SF}(\Gamma_i)$,
			\item[$(4)$] if $\mathbb{SF}(\Gamma_i)=\mathbb{F}$ for all $i$, then $\mathbb{SF}(\Gamma)=\mathbb{F}$.
		\end{itemize}
	\end{lem}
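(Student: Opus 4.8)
The plan is to argue through the character formula for eigenvalues of normal Cayley graphs, combined with the Galois action of $\mathbb{Z}_m^{*}$ on the $m$-th cyclotomic field, where $m$ is the exponent of $G$. Put $F=\mathbb{Q}(\zeta_m)$ for a primitive $m$-th root of unity $\zeta_m$. Since every element of $G$ has order dividing $m$, all values $\chi(g)$ with $\chi\in\mathrm{Irr}(G)$ lie in $F$, and hence so do all eigenvalues of $\Gamma$ and of the $\Gamma_i$, namely the numbers $\chi(S)/\chi(1)$ and $\chi(S_i)/\chi(1)$. First I would note that the hypotheses force the union $S=S_1\cup\cdots\cup S_k$ to be \emph{disjoint}: if $x\in S_i\cap S_j$ with $i\neq j$ then $x\in\langle S_i\rangle\cap\langle S_j\rangle=\{1\}$, impossible since $1\notin S_i$ (the graphs are loop-free). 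Consequently $\chi(S)=\sum_{i=1}^k\chi(S_i)$ for every $\chi\in\mathrm{Irr}(G)$, i.e. the eigenvalue of $\Gamma$ afforded by $\chi$ is the sum of the eigenvalues afforded by $\chi$ to the $\Gamma_i$.

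The heart of the argument is the identity $H'=\bigcap_{i=1}^{k}H_i'$, where $H'=\{k\in\mathbb{Z}_m^{*}\mid S^{k}=S\}$ and $H_i'=\{k\in\mathbb{Z}_m^{*}\mid S_i^{k}=S_i\}$ are the subgroups of Lemma~\ref{lem:normal-deg} (note that $m$, the exponent of $G$, is the \emph{same} for $\Gamma$ and for every $\Gamma_i$, so all of these sit inside the one group $\mathbb{Z}_m^{*}$). The inclusion $\bigcap_iH_i'\subseteq H'$ is immediate from $S=\bigsqcup_iS_i$. For the reverse, fix $k\in H'$. Because $(k,m)=1$ and $m$ is the exponent of $G$, the map $g\mapsto g^{k}$ is a bijection of $G$ (an inverse is $g\mapsto g^{k'}$ with $kk'\equiv1\pmod m$), so $|S_i^{k}|=|S_i|$. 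Now $S_i^{k}\subseteq S^{k}=S=\bigsqcup_jS_j$ and $S_i^{k}\subseteq\langle S_i\rangle$, whereas for $j\neq i$ one has $\langle S_i\rangle\cap S_j\subseteq\langle S_i\rangle\cap\langle S_j\rangle=\{1\}$ with $1\notin S_j$; hence $S_i^{k}\cap S_j=\varnothing$ for every $j\neq i$, which forces $S_i^{k}\subseteq S_i$ and then $S_i^{k}=S_i$ by the cardinality count. Thus $k\in\bigcap_iH_i'$.

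Next I would set up the Galois dictionary. For $k\in\mathbb{Z}_m^{*}$ write $\sigma_k\in\mathrm{Gal}(F/\mathbb{Q})$ for the automorphism $\zeta_m\mapsto\zeta_m^{k}$; passing to eigenvalues of $\rho(g)$ gives $\sigma_k(\chi(g))=\chi(g^{k})$, and hence $\sigma_k\big(\chi(T)/\chi(1)\big)=\chi(T^{k})/\chi(1)$ for any normal subset $T$ of $G$. Since $\chi(T)=\chi(T')$ for all $\chi\in\mathrm{Irr}(G)$ implies $T=T'$ as normal multisets (the central element $\sum_{t\in T}t$ of $\mathbb{C}[G]$ is pinned down by the scalars $\chi(T)/\chi(1)$), it follows that $\mathrm{Gal}(F/\mathbb{SF}(\Gamma))$ is the set of $\sigma_k$ with $k\in H'$, and $\mathrm{Gal}(F/\mathbb{SF}(\Gamma_i))$ the set of $\sigma_k$ with $k\in H_i'$, so $\mathbb{SF}(\Gamma)$ is the fixed field of $\{\sigma_k\mid k\in H'\}$ and $\mathbb{SF}(\Gamma_i)$ that of $\{\sigma_k\mid k\in H_i'\}$, by the Galois correspondence for $F/\mathbb{Q}$. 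From here the four parts drop out. For $(1)$: if $k\in H'\subseteq H_i'$ then $\sigma_k$ fixes $\chi(S_i)/\chi(1)=\chi(S_i^{k})/\chi(1)$ for all $\chi$, so every eigenvalue of $\Gamma_i$ is fixed by $\mathrm{Gal}(F/\mathbb{SF}(\Gamma))$, hence lies in $\mathbb{SF}(\Gamma)$; as $\mathbb{SF}(\Gamma_i)$ is generated by these eigenvalues, $\mathbb{SF}(\Gamma_i)\subseteq\mathbb{SF}(\Gamma)$. For $(2)$: $H'\leq H_i'$ gives $|H'|\mid|H_i'|$, so $\deg(\Gamma_i)=\varphi(m)/|H_i'|$ divides $\varphi(m)/|H'|=\deg(\Gamma)$ by Lemma~\ref{lem:normal-deg}. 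For $(3)$: in the tower $\mathbb{Q}\subseteq\mathbb{SF}(\Gamma_i)\subseteq\mathbb{SF}(\Gamma)$ (by $(1)$), multiplicativity of degrees reads $2=[\mathbb{SF}(\Gamma):\mathbb{SF}(\Gamma_i)]\cdot2$, so $\mathbb{SF}(\Gamma)=\mathbb{SF}(\Gamma_i)$. For $(4)$: by $(1)$, $\mathbb{F}=\mathbb{SF}(\Gamma_i)\subseteq\mathbb{SF}(\Gamma)$, while $\chi(S)/\chi(1)=\sum_i\chi(S_i)/\chi(1)\in\mathbb{F}$ for all $\chi$, so $\mathbb{SF}(\Gamma)\subseteq\mathbb{F}$ and equality holds.

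I expect the main obstacle to be the identity $H'=\bigcap_iH_i'$: this is the one place where the trivial-intersection hypothesis $\langle S_i\rangle\cap\langle S_j\rangle=\{1\}$ is genuinely used, to rule out a unit $k\in\mathbb{Z}_m^{*}$ that preserves $S$ as a whole while shuffling elements between different $S_i$. One should also be careful to keep the same exponent $m$ for $\Gamma$ and all the $\Gamma_i$ when invoking Lemma~\ref{lem:normal-deg}, and to apply the implication ``$\chi(T)=\chi(T')$ for all $\chi$ forces $T=T'$'' only to normal multisets (it fails for arbitrary ones). Everything else is routine Galois theory and divisibility bookkeeping.
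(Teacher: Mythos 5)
Your proof is correct, but it reaches part $(1)$ by a genuinely different route from the paper's. The paper argues via induced characters: writing an eigenvalue of $\Gamma_i$ as $\lambda=\psi(S_i)/\psi(1)$ with $\psi\in\mathrm{Irr}(\langle S_i\rangle)$, it evaluates $\psi\uparrow G$ on $S$, using normality of the $S_j$ and the trivial-intersection hypothesis to get $(\psi\uparrow G)(S\setminus S_i)=0$, and then expresses $\lambda$ as a rational combination of the eigenvalues $\chi(S)/\chi(1)$ of $\Gamma$; parts $(2)$ and $(3)$ are then read off from $(1)$ by the tower formula, and $(4)$ from the additivity $\chi(S)=\sum_i\chi(S_i)$ exactly as you do. You instead establish the stabilizer identity $H'=\bigcap_i H_i'$ inside $\mathbb{Z}_m^{*}$ and translate everything through the Galois correspondence for $\mathbb{Q}(\zeta_m)/\mathbb{Q}$. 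Your key step --- that a unit $k\in H'$ forces $S_i^{k}\subseteq\langle S_i\rangle\cap S=S_i$ and hence $S_i^{k}=S_i$ by a cardinality count --- is precisely where the hypothesis $\langle S_i\rangle\cap\langle S_j\rangle=\{1\}$ enters, and it is correctly executed, including the observation that equality of $\chi(T)$ for all $\chi\in\mathrm{Irr}(G)$ pins down a normal multiset $T$ (needed to identify the Galois groups of the splitting fields with $\{\sigma_k\mid k\in H'\}$ and $\{\sigma_k\mid k\in H_i'\}$). Your route buys a direct proof of $(2)$ from the containment $H'\leq H_i'$ together with Lemma \ref{lem:normal-deg}, and it avoids the induced-character computation (whose final step the paper leaves somewhat terse); the paper's route stays closer to the raw eigenvalue formula and does not require the Galois-theoretic description of $\mathbb{SF}(\Gamma)$. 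The only cosmetic flaw is the double use of the letter $k$ for both the number of sets and an element of $\mathbb{Z}_m^{*}$; mathematically the argument is sound.
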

	\begin{proof}
		Put $G_i:=\langle S_i\rangle$, $i=1,\ldots,k$. Since $S_i$ is a normal subset of $G$, $G_i$ is a normal subgroup of $G$. Let $\lambda$ be an eigenvalue
		of $\Gamma_i$ and $\mathrm{Irr}(G)=\{\chi_1,\cdots,\chi_k\}$. Then $\lambda=\frac{\psi(S_i)}{\psi(1)}$ for some $\psi\in\mathrm{Irr}(G_i)$. Let $\psi\uparrow G$ be the induced character from $\psi$. Then $\psi\uparrow G=d_1\chi_1+\cdots+d_k\chi_k$ for some integers $d_1,\ldots,d_k$. Since $S_i$s are normal subsets of $G$ and $\langle S_l\rangle\cap\langle S_{l'}\rangle=\{1\}$ for distinct $l,l'$, we have $(\psi\uparrow G)(S_i)=\frac{1}{|G_i|}\psi(S_i)$ and $(\psi\uparrow G)(S\setminus S_i)=0$. Hence $\frac{1}{|G_i|}\psi(S_i)=(\psi\uparrow G)(S)=d_1\chi_1(S)+\cdots+d_k\chi_k(S)$. Since the splitting fields contain the rational field and the character degrees are positive integers, we conclude that $\lambda\in\mathbb{SF}(\Gamma)$. Hence $\mathbb{SF}(\Gamma_i)\subseteq\mathbb{SF}(\Gamma)$. This proves $(1)$. Parts $(2)$ and $(3)$ are direct consequences of $(1)$.
		
		Let $\lambda$ be an eigenvalue of $\Gamma$. Since $S_i$s are normal subsets of $G$, $S$ is also a normal subset of $G$. Hence there exists $\chi\in\mathrm{Irr}(G)$ such that
		$\lambda=\frac{\chi(S)}{\chi(1)}$. On the other hand, $S_i$s are pairwise disjoint and so $\chi(S)=\chi(S_1)+\cdots+\chi(S_k)$. Since for each $i$, $\chi(S_i)\in\mathbb{F}$, we conclude
		that $\lambda\in\mathbb{F}$. Thus $\mathbb{SF}(\Gamma)\subseteq\mathbb{F}$. Now $(1)$ implies that $\mathbb{SF}(\Gamma)=\mathbb{F}$ as desired.
	\end{proof}
	\begin{thm}\label{4}
		Let $G$ be a finite abelian group. Then $G\in\mathcal{G}_4$ if and only if $G$ is isomorphic to one the following groups:
		\begin{itemize}
			\item[$(1)$] $\mathbb Z_n$, where $n=8,10,12,15,16,20,24,30$.
			\item[$(2)$] $\mathbb Z_n\times\mathbb Z_2^2$, where $n=5,8,10,12$.
			\item[$(3)$] $\mathbb Z_n\times\mathbb Z_2$, where $n=8,10,12$.
			\item[$(4)$] $\mathbb Z_n\times\mathbb Z_m$, where $(n,m)$ is one of the pairs $(3,12)$, $(4,8)$, $(4,10)$, $(4,12), (6,8)$, $(6,10)$, $(6,12)$, $(5,5)$, $(5,10)$, $(8,8)$, $(10,10)$, $(12,12)$.
		\end{itemize}
		Furthermore, a connected $4$-regular abelian undirected Cayley graph $\Gamma$ is $2$-integral if and only if it is isomorphic to one of the  following $39$ Cayley graphs $\mathrm{Cay}(G,S)$, where
		\begin{itemize}
			\item[$\romannumeral1$.] $G=\langle x\rangle \cong \mathbb{Z}_n$ where $n=15,16,20,24,30$, and $S=\{x,x^{-1},x^k,x^{-k}\}$ where $1\leq k\leq n$ such that $(k,n)=1$,
			\item[$\romannumeral2$.] $G=\langle x\rangle \cong \mathbb{Z}_8$ and $S=\{x,x^2,x^6,x^7\}$,		
			\item[$\romannumeral3$.] $G=\langle x\rangle \cong \mathbb{Z}_{10}$ and $S=\{x,x^2,x^8,x^9\}$,
			\item[$\romannumeral4$.] $G=\langle x\rangle \cong \mathbb{Z}_{12}$ and $S=\{x,x^2,x^{10},x^{11}\}$ or $\{x,x^3,x^9,x^{11}\}$ or $\{x,x^4,x^8,x^{11}\}$,
			\item[$\romannumeral5$.] $G=\langle x\rangle\times \langle w\rangle\cong \mathbb{Z}_n\times \mathbb{Z}_2$ where $n=8,10,12$ and $S=\{x,x^{-1},x^{\frac{n}{2}},w\}$,
			\item[$\romannumeral6$.] $G=\langle x\rangle\times \langle z\rangle\times \langle w\rangle\cong \mathbb{Z}_n\times \mathbb{Z}_2^2$ where $n=5,8,10,12$ and $S=\{x,x^{-1},z,w\}$,
			\item[$\romannumeral7$.] $G=\langle x\rangle\times \langle w\rangle\cong \mathbb{Z}_n\times \mathbb{Z}_m$ where $(n,m)$ is one of the pairs $(3,12)$, $(4,8)$, $(4,10)$, $(4,12), (6,8)$, $(6,10)$, $(6,12)$, $(5,5)$, $(5,10)$, $(8,8)$, $(10,10)$, $(12,12)$, and $S=\{x,x^{-1},y,y^{-1}\}$.
		\end{itemize}
	\end{thm}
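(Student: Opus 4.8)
\medskip
\noindent\textbf{Proof (plan).}
The strategy is to combine Corollary~\ref{asli} with a short Euler-function analysis to reduce an inverse-closed generating set $S$ with $|S|=4$ to a very short list of combinatorial types, to identify $G=\langle S\rangle$ in each type, and then to decide $2$-integrality of each resulting pair $(G,S)$ using the reduction lemmas of this section (Lemmas~\ref{comp}, \ref{union}, \ref{+1}, \ref{induced}, \ref{half}) together with a direct computation of $\chi(S)$ for $\chi\in\mathrm{Irr}(G)$. The classification of abelian groups in $\mathcal{G}_4$ then drops out of the list of admissible pairs.

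For the forward implication, assume $G$ is abelian, $S=S^{-1}$ generates $G$, $|S|=4$, and $\Gamma=\mathrm{Cay}(G,S)$ is $2$-integral. By Corollary~\ref{asli}, $S=\bigcup_i S_i$ with $\varnothing\neq S_i=[g_i]\cap S$, and each $S_i$ is either $[g_i]$ (in which case $\mathrm{Cay}(G,S_i)$ is integral) or a half-orbit $g_i^{H}$ with $[\mathrm{Aut}(\langle g_i\rangle):H]=2$ and $|S_i|=\varphi(o(g_i))/2$, at least one $S_i$ being of the latter kind. Because $\Gamma$ is undirected, every $S_i$ is inverse-closed; for a half-orbit this forces the inversion automorphism $\tau$ into $H$, so $H$ is an index-$2$ subgroup of $\mathbb{Z}_{o(g_i)}^*$ containing $-1$. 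Since $|S|=4$, a full part $[g_i]$ has size $\varphi(o(g_i))\in\{1,2\}$ (the value $3$ cannot occur, and $4$ would give $S=[g_i]$ and $\Gamma$ integral), and a half-orbit has size $\varphi(o(g_i))/2\in\{2,4\}$: size $1$ is impossible because $H=\{1\}\not\ni -1$ when $o(g_i)>2$, and size $3$ is impossible because for $\varphi(m)=6$ the group $\mathbb{Z}_m^*$ is cyclic of order $6$ and its unique index-$2$ subgroup has order $3$, hence misses the involution $-1$. Moreover, when $\varphi(o(g_i))=4$ one verifies directly that $\{g_i,g_i^{-1}\}$ is the only inverse-closed half-orbit of $[g_i]$. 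Consequently $S$ has exactly one of the types: (a) a single half-orbit of size $4$, so $o(g_1)\in\{15,16,20,24,30\}$ and $G=\langle g_1\rangle$; (b) $\{g_1,g_1^{-1}\}\cup\{g_2,g_2^{-1}\}$ with $o(g_1),o(g_2)\in\{5,8,10,12\}$; (c) $\{g_1,g_1^{-1}\}$, $o(g_1)\in\{5,8,10,12\}$, together with a full part $\{g_2,g_2^{-1}\}$, $o(g_2)\in\{3,4,6\}$; (d) $\{g_1,g_1^{-1}\}$, $o(g_1)\in\{5,8,10,12\}$, together with two involutions. In types (b)--(d), $G=\langle S\rangle$ is pinned down up to isomorphism by the relative position of the cyclic subgroups $\langle g_i\rangle$: either one of them contains another, whence $G$ is cyclic of order in $\{8,10,12\}$, or the $\langle g_i\rangle$ have trivial pairwise intersections, whence $G$ is the corresponding internal direct product; in every case one lands among the groups of $(1)$--$(4)$.

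For the converse and the explicit list of graphs, fix a candidate $(G,S)$ and use $\chi(S)=\sum_i\chi(S_i)$ for $\chi\in\mathrm{Irr}(G)$. A full part contributes a rational integer to $\chi(S)$ (an involution contributes $\pm1$, and a complete set of generators of a cyclic group of order in $\{3,4,6\}$ contributes an integer, cf.\ \cite[Lemma 22.15]{JL}), while a half-orbit $\{g_i,g_i^{-1}\}$ contributes $\chi(g_i)+\chi(g_i^{-1})$, which always lies in $\mathbb{Q}(\sqrt5)$ if $o(g_i)\in\{5,10\}$, in $\mathbb{Q}(\sqrt2)$ if $o(g_i)=8$, and in $\mathbb{Q}(\sqrt3)$ if $o(g_i)=12$. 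Hence every eigenvalue of $\Gamma$ lies in one fixed real quadratic field precisely when all the half-orbit parts of $S$ involve orders with the same associated quadratic field; together with Lemma~\ref{induced}$(1)$ (applied, in the direct-product cases, to the half-orbit component, whose splitting field is already that quadratic field) this forces the compatibility between the two factors in $(4)$ and identifies $\mathbb{SF}(\Gamma)$ exactly. It remains only to rule out $\deg(\Gamma)=1$, which follows from Lemma~\ref{lem:abelian-deg} by exhibiting the index-$2$ subgroup $H=\{k\in\mathbb{Z}_{|G|}^*\mid S^k=S\}$ (or again from Lemma~\ref{induced}$(1)$). Listing, for each group found, the $\mathrm{Aut}(G)$-orbits of admissible $S$ and then discarding Cayley graphs that are isomorphic (possibly across different groups in the list) produces the stated family of graphs and the description of $\mathcal{G}_4$.

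The main obstacle is the bookkeeping in this final enumeration: there are several types, each splitting into a few isomorphism classes of $G$, and within each one must list the $\mathrm{Aut}(G)$-orbits of admissible sets and reduce the resulting Cayley graphs up to graph isomorphism. Two points deserve particular care: first, the cases in which the subgroups $\langle g_i\rangle$ are nested rather than independent, where Lemma~\ref{induced} does not apply and the spectrum must be read off directly from the characters of the ambient cyclic group (as for the circulants over $\mathbb{Z}_8,\mathbb{Z}_{10},\mathbb{Z}_{12}$ in items~$\romannumeral2$--$\romannumeral4$); and second, in type (a), determining exactly which index-$2$ subgroups of $\mathbb{Z}_n^*$ with $\varphi(n)=8$ contain $-1$ and which of the resulting circulants are mutually non-isomorphic, which is what controls the graphs in item~$\romannumeral1$.
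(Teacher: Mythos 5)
Your proposal is correct and follows essentially the same route as the paper: reduce via Corollary \ref{asli} to a decomposition of $S$ into full classes $[g_i]$ and half-orbits, bound the orders by the Euler-function constraints $\varphi(o(g_i))\in\{1,2,4,8\}$, read off the group $\langle S\rangle$ in each configuration, and verify the converse with Lemmas \ref{half}, \ref{+1} and \ref{induced} plus direct character computations in the nested (circulant) cases. The only difference is presentational — you enumerate by part-size type where the paper enumerates Cases I--V by the explicit inverse-pairing of $\{x,y,z,w\}$ — and both treatments share the same unaddressed subtlety in the two-generator case, namely that $\langle x\rangle$ and $\langle y\rangle$ may intersect nontrivially without one containing the other.
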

	\begin{proof}
		Let $S=\{x,y,z,w\}$ be an inverse closed generating set of $G$ and $\mathrm{Cay}(G,S)$  is $2$-integral. Then we deal with the following cases:
		
		\textbf{Case I.} $x^2=y^2=z^2=w^2=1$. In this case, $G$ is an elementary abelian $2$-group isomorphic to $\mathbb Z_2^n$, where $n=3$ or $4$. Furthermore, $S=S_1\cup S_2\cup S_3\cup S_4$,
		where $S_1=\{x\}=[x]$, $S_2=\{y\}=[y]$, $S_3=\{z\}=[z]$ and $S_4=\{w\}=[w]$, which contradicts Corollary \ref{asli}.
		
		\textbf{Case II.} $z^2=w^2=1$, $z,w\notin\langle x\rangle$  and $y=x^{-1}$. In this case, $G=\langle x\rangle\times\langle z\rangle\times\langle w\rangle\cong\mathbb Z_n\times\mathbb{Z}_2^2$, where $n=o(x)$. Furthermore, $S=S_1\cup S_2\cup S_3$, where $S_1=\{x,x^{-1}\}\subseteq [x]$, $S_2=\{z\}=[z]$ and $S_3=\{w\}=[w]$. Now Corollary \ref{asli} implies
		that $S_1\neq [x]$ and $\varphi(n)=4$ which means $n=5,8,10,12$.
		
		\textbf{Case III.} $y=x^{-1}$, $z=x^{\frac{n}{2}}$, where $n=o(x)$ is even and $w^2=1$. In this case, $G\cong\mathbb Z_n\times\mathbb Z_2$. Also $S=S_1\cup S_2\cup S_3$, where
		$S_1=\{x,x^{-1}\}\subseteq [x]$, $S_2=\{x^\frac{n}{2}\}=[x^\frac{n}{2}]$ and $S_3=\{w\}=[w]$. Then by Corollary \ref{asli}, $S_1\neq [x]$ and $\varphi(n)=4$, which means $n=8,10,12$.
		
		\textbf{Case IV.} $y=x^k$ for some $k\neq 1,-1$, $z=x^{-1}$ and $w=y^{-1}$. In this case, $y^2\neq 1$ and $G=\langle x\rangle\cong\mathbb Z_n$, where $n=o(x)$. 
		
		First assume that $(k,n)=1$. Then $S\subset [x]$ and Lemma \ref{half} implies that $\varphi(n)=8$, which means $n=15,16,20,24,30$. 
		
		Now let $(k,n)=d\neq 1$. Then $S=S_1\cup S_2$, where $S_1=\{x,x^{-1}\}\subseteq [x]$ and $S_2=\{x^k,x^{-k}\}\subseteq [x^k]$. 
		If $S_1=[x]$ and $S_2\neq[x^k]$ then $n=3,4,6$ and $\frac{n}{d}=5,8,10,12$, respectively, which is impossible.
		If $S_1\neq [x]$ and $S_2=[x^k]$, then $n=5,8,10,12$ and $\frac{n}{d}=3,4,6$, respectively, which implies $(n,k)=(8,2)$, $(12,2)$, $(12,3)$ or $(12,4)$.
		If $S_1\neq [x]$ and $S_2\neq [x^k]$ then $n,\frac{n}{d}=5,8,10,12$, which implies $(n,k)=(10,2)$.

		\textbf{Case V.} $\langle x\rangle\cap\langle y\rangle=1$. Then $G=\langle x\rangle\times\langle y\rangle\cong\mathbb Z_n\times\mathbb Z_m$, where $n=o(x)$ and $m=o(y)$. We may assume that $n\leq m$. In this case, $S=S_1\cup S_2$, where $S_1=\{x,x^{-1}\}\subseteq [x]$ and $S_2=\{y,y^{-1}\}\subseteq [y]$. By Corollary \ref{asli}, $S_1=[x], S_2\neq [y]$ or
		$S_1\neq [x],S_2=[y]$ or $S_1\neq [x],S_2\neq [y]$. By a similar discussion to the above cases, in the first case $n=3,4,6,~m=5,8,10,12$, in the second case $n=5,8,10,12,~m=3,4,6$ and in the later case $n,m=5,8,10,12$.
		The second case is impossible because $n\leq m$. 
		Next we will show that in the last case $(n,m)$ must be $(5,5)$, $(5,10)$, $(8,8)$, $(10,10)$ or $(12,12)$.	
		Let $g\in G$ and $o(g)=k\geq 2$ and $C_k=\mathrm{Cay}(\langle g\rangle,\{g,g^{-1}\})$.
		By an easy computation, we have $\mathbb{SF}(C_{5})=\mathbb{SF}(C_{10})=\mathbb Q[\sqrt{5}]$, $\mathbb{SF}(C_8)=\mathbb Q[\sqrt{2}]$ and $\mathbb{SF}(C_{12})=\mathbb Q(\sqrt{3})$. Hence, by Corollary \ref{asli} and Lemma \ref{induced}, we have the result as desired. Moreover, Cases $\mathrm{ \uppercase\expandafter{\romannumeral1}}$-$\mathrm{\uppercase\expandafter{\romannumeral5} }$ prove one direction.
		
		The proof of converse direction of $G\cong\mathbb Z_n\times\mathbb{Z}_2^2$ where $n=5,8,10,12$, and $G\cong\mathbb Z_n\times\mathbb{Z}_2$ where $n=8,10,12$ are similar to the proof of converse direction of Theorem \ref{3}.
		
		Consider the converse direction of case $G=\langle x\rangle \cong \mathbb Z_n$, where $o(x)=n$ is shown in $(1)$.
		Let $\Sigma_{k}=\mathrm{Cay}(\langle x\rangle,\{x,x^k,x^{-1},x^{-k}\})$. 
		First suppose that $n=15,16,20,24,30$ and $(n,k)=1$. 
		Let $\sigma_k:x\mapsto x^k$ and $\tau:x\mapsto x^{-1}$.
		Then $H=\langle \tau,\sigma_k\rangle$ is a subgroup of index $2$ in $\mathrm{Aut}(G)$ and $x^H=\{x,x^k,x^{-1},x^{-k}\}$.
		By Lemmas \ref{half}, we have $\mathrm{SF}(\Sigma_{k})=2$.
		Now suppose that $n=8,10,12$.
		By a tedious computation, one can see that $\mathbb{SF}(\Sigma_{k})=\mathbb Q(\sqrt{2})$ if $(n,k)=(8,2)$, $(12,2)$ or $(12,4)$,
		$\mathbb{SF}(\Sigma_{k})=\mathbb Q(\sqrt{3})$ or $\mathbb Q(\sqrt{5})$ if $(n,k)=(12,3)$ or $(10,2)$ respectively.	
		
		Finally, consider the converse direction of case $G=\langle x\rangle \times \langle y\rangle \cong \mathbb Z_n\times\mathbb Z_m$, where $(n,m)$ shown in $(3)$.
		Let $S=S_1\cup S_2$ and $S_1=\{x,x^{-1}\}\subseteq [x]$ and $S_2=\{y,y^{-1}\}\subseteq [y]$.
		Note that $C_3,C_4,C_6$ are integral with valency $2$ and $\mathbb{Z}_{8},\mathbb{Z}_{10},\mathbb{Z}_{12}$ are all in $\mathcal{G}_2$ by Theorem \ref{2}.
		Hence by Lemma \ref{+1}, $\mathbb Z_n\times\mathbb Z_m \in \mathcal{G}_4$, where $(n,m)=(3,12)$, $(4,8)$, $(4,10)$, $(4,12)$, $(6,8)$, $(6,10)$ and $(6,12)$.
		On the other hand, since $\mathbb{SF}(C_{5})=\mathbb{SF}(C_{10})=\mathbb Q[\sqrt{5}]$, $\mathbb{SF}(C_8)=\mathbb Q[\sqrt{2}]$ and $\mathbb{SF}(C_{12})=\mathbb Q(\sqrt{3})$, 
		by Lemma \ref{induced}, $\mathbb Z_n\times\mathbb Z_m \in \mathcal{G}_4$, where $(n,m)=(5,5)$, $(5,10)$, $(8,8)$. 
		This completes the proof.
	\end{proof}
	\subsection{$\mathcal{G}_5$}
	To characterize all finite abelian groups in $\mathcal{G}_5$, we need the following general results.
	\begin{lem}\label{-1}
		Let $G$ be a finite group of even order, $S$ be a normal subset of $G$ and there exists an involution $x\in Z(G)\cap S$. If $\mathrm{Cay}(G,S)$ is connected with the splitting field $\mathbb{F}$, then 
		\begin{itemize}
			\item [$(1)$] $G=\langle S\setminus\{x\}\rangle$ or $G=\langle S\setminus\{x\}\rangle\times\langle x\rangle\cong \langle S\setminus\{x\}\rangle\times\mathbb{Z}_2$;
			\item [$(2)$] $\mathrm{Cay}(\langle S\setminus\{x\}\rangle, S\setminus\{x\})$ is a connected graph with the splitting filed $\mathbb{F}$. 
		\end{itemize}
	\end{lem}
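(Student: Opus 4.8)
The plan is to introduce the shorthand $T:=S\setminus\{x\}$ and $N:=\langle T\rangle$, establish a few structural facts, dispose of part~(1) by an elementary argument about the central subgroup $\langle x\rangle$, and then deduce part~(2) by reducing to Lemma~\ref{+1}. First I would record the preliminary observations: since $x$ is an involution, $[x]=\{x\}$ and $x\notin T$; and since $S$ is a normal subset of $G$ while $\{x\}$ is fixed setwise by conjugation (as $x\in Z(G)$), the set $T=S\setminus\{x\}$ is again a normal subset of $G$, so $N=\langle T\rangle$ is a normal subgroup of $G$. These are the facts that make both Lemma~\ref{+1} and the standard component decomposition applicable.

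For part~(1): connectedness of $\mathrm{Cay}(G,S)$ gives $G=\langle S\rangle=\langle T\cup\{x\}\rangle=\langle T\rangle\langle x\rangle=N\langle x\rangle$, where we use that $\langle x\rangle$ is central and hence normal. Because $|\langle x\rangle|=2$, the intersection $N\cap\langle x\rangle$ is either $\langle x\rangle$ or trivial. In the first case $x\in N$, so $G=N=\langle S\setminus\{x\}\rangle$. In the second case $N\cap\langle x\rangle=1$ with both factors normal, so $G=N\times\langle x\rangle\cong\langle S\setminus\{x\}\rangle\times\mathbb{Z}_2$. This is exactly the claimed dichotomy.

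For part~(2): the digraph $\mathrm{Cay}(N,T)$ is connected because $N=\langle T\rangle$ by construction, so only the statement about the splitting field requires argument, and I would obtain it by chaining two equalities. On one hand, since $T$ is a normal subset of $G$ with $\langle T\rangle=N$, the Cayley digraph $\mathrm{Cay}(G,T)$ decomposes as a disjoint union of $|G:N|$ copies of the connected digraph $\mathrm{Cay}(N,T)$ (its weakly connected components are the right cosets of $N$ — this is precisely the observation already used in the proof of Lemma~\ref{comp}), so $\mathrm{Cay}(G,T)$ and $\mathrm{Cay}(N,T)$ have the same set of distinct eigenvalues and therefore $\mathbb{SF}(\mathrm{Cay}(G,T))=\mathbb{SF}(\mathrm{Cay}(N,T))$. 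On the other hand, $x\in Z(G)\setminus T$ is an involution, so Lemma~\ref{+1} applied to the pair $(G,T)$ yields $\mathbb{SF}(\mathrm{Cay}(G,T))=\mathbb{SF}(\mathrm{Cay}(G,T\cup\{x\}))=\mathbb{SF}(\mathrm{Cay}(G,S))=\mathbb{F}$. Combining the two gives $\mathbb{SF}(\mathrm{Cay}(N,T))=\mathbb{F}$, which is part~(2).

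I do not expect a genuine obstacle: the lemma is essentially a restatement of Lemma~\ref{+1} in the language of ``deleting a central involution from the connection set,'' together with the routine fact that a Cayley digraph is a disjoint union of copies of the Cayley digraph on the subgroup generated by its connection set. The only points needing care are checking that $T$ remains a normal subset of $G$ (so that the two cited tools genuinely apply) and noting that the equality $\mathbb{SF}(\mathrm{Cay}(G,T))=\mathbb{SF}(\mathrm{Cay}(N,T))$ holds even when $T$ is not inverse-closed, i.e.\ when these Cayley graphs are actual digraphs. The degenerate case $T=\varnothing$, which forces $G\cong\mathbb{Z}_2$, falls under the second alternative of part~(1) and causes no difficulty.
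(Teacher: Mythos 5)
Your proof is correct, and part (2) follows a genuinely different route from the paper's. Part (1) is identical: both arguments write $G=\langle S\setminus\{x\}\rangle\langle x\rangle$ using centrality of $x$ and split on whether $x$ lies in $\langle S\setminus\{x\}\rangle$. For part (2), the paper keeps the case distinction from part (1): when $G=\langle S\setminus\{x\}\rangle$ it invokes Lemma \ref{+1} exactly as you do, but when $G=\langle S\setminus\{x\}\rangle\times\langle x\rangle$ it runs a separate character-theoretic argument, using that $\chi(x)=\pm\chi(1)$ forces the restriction $\chi|_H$ to stay irreducible (as $|G:H|=2$) and, conversely, inducing up via $\eta\times\rho_0$ to match each eigenvalue of $\mathrm{Cay}(H,S\setminus\{x\})$ with an eigenvalue of $\mathrm{Cay}(G,S)$ up to a rational shift. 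You instead observe that Lemma \ref{+1} applies at the level of $G$ regardless of whether $T=S\setminus\{x\}$ generates $G$ --- the two hypotheses to check being that $T$ is still a normal subset (true, since $\{x\}$ is a central conjugacy class) and that $T\cap[x]=\varnothing$ --- and then descend from $\mathrm{Cay}(G,T)$ to $\mathrm{Cay}(\langle T\rangle,T)$ by the disjoint-union-of-components fact already used in Lemma \ref{comp}, which preserves the set of distinct eigenvalues. This makes part (2) independent of part (1), eliminates the Clifford-theory-style case, and is shorter; the paper's version has the minor advantage of exhibiting the explicit integer shift relating the two spectra in the direct-product case, but your chain $\mathbb{SF}(\mathrm{Cay}(\langle T\rangle,T))=\mathbb{SF}(\mathrm{Cay}(G,T))=\mathbb{SF}(\mathrm{Cay}(G,S))=\mathbb{F}$ delivers the same conclusion with less machinery. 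Your handling of the degenerate case $T=\varnothing$ is also appropriate.
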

	\begin{proof}
		Since $\mathrm{Cay}(G,S)$ is connected, $G=\langle S\rangle=\langle (S\setminus\{x\})\cup \{x\}\rangle$. 
		Since $x\in Z(G)$, $G=\langle S\setminus\{x\}\rangle \langle x\rangle$. 
		Moreover, $S\setminus\{x\}$ is also conjugate-closed, and since $x$ has order $2$, $G=\langle S\setminus\{x\}\rangle$ or $G=\langle S\setminus\{x\}\rangle\times\langle x\rangle\cong \langle S\setminus\{x\}\rangle\times\mathbb{Z}_2$. 
		This proves $(1)$.

		Let $\Gamma_1=\mathrm{Cay}(G, S\setminus\{x\})$. We claim that the splitting field of $\Gamma_1$ is $\mathbb{F}$. 
		First we suppose that $G=\langle S\setminus\{x\}\rangle$.
		Then Lemma \ref{+1} implies that $\mathrm{Cay}(G,S\setminus\{x\})$ and $\mathrm{Cay}(G,S)$ have the same splitting filed and so the splitting filed of $\Gamma_1$ is $\mathbb{F}$. 
		Next we suppose that $G=\langle S\setminus\{x\}\rangle\times\langle x\rangle$.
		Let $\lambda$ be an eigenvalue of $\mathrm{Cay}(G,S)$. 
		Then $\lambda=\frac{\chi(S)}{\chi(1)}$, for some $\chi\in\mathrm{Irr}(G)$. Since $x\in Z(G)$ we have $\chi(x)=\pm\chi(1)$, see \cite[Exrecise 5 of Chapter 13]{JL}, which implies $\lambda=\frac{\chi(S\setminus\{x\})}{\chi(1)}\pm 1$. Let $H=\langle S\setminus\{x\}\rangle$ and $\chi\mid_H$ be the restriction of $\chi$ to $H$. Since $|G:H|=2$ and $\chi(x)\neq 0$, \cite[Proposition 20.5]{JL} implies that
		$\chi|_H$ is an irreducible character of $H$. Furthermore, $\chi(1)=\chi|_H(1)$ and $\chi|_H(S\setminus\{x\})=\chi(S\setminus\{x\})$ which imply $\lambda=\theta\pm 1$, for some eigenvalue $\theta$ of $\Gamma_1$. Now assume that $\mu$ be an eigenvalue of $\Gamma_1$. Then $\mu=\frac{\eta(S\setminus\{x\})}{\eta(1)}$ for some $\eta\in\mathrm{Irr}(H)$. Let $\rho_0$ be the principal character of $\langle x\rangle$. Then $\zeta:=\eta\times\rho_0$ is an irreducible character of $G$ and $\zeta(S)=\eta(S\setminus\{x\})+\rho_0(x)=\eta(S\setminus\{x\})+1$. Hence $\mu=\xi-\frac{1}{\zeta(1)}$, where $\xi=\frac{\zeta(S)}{\zeta(1)}$ is an eigenvalue of $\mathrm{Cay}(G,S)$. Since $\frac{1}{\zeta(1)}\in\mathbb{Q}$, we conclude that the splitting filed of $\mathrm{Cay}(G,S)$ and $\Gamma_1$ are the same.
		This proves $(2)$.
	\end{proof}
	
	\begin{lem}\label{productmain}
		Let $\Gamma_1$ be an undiredted graph with the splitting field $\mathbb F$ and $\Gamma_2$ be an integral undirected graph. Then $\Gamma_1\square\Gamma_2$, $\Gamma_1\otimes\Gamma_2$, and $\Gamma_1\boxtimes\Gamma_2$  have the same splitting field $\mathbb F$. Furthermore, if $\Gamma_2$ is a connected regular graph then $\Gamma_1[\Gamma_2]$ has also the same splitting field $\mathbb F$.
	\end{lem}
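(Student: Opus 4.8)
The plan is to reduce everything to the classical formulas that express the spectrum of each product in terms of the spectra of the two factors, and then to observe that every eigenvalue of $\Gamma_2$, as well as the parameters $|V(\Gamma_2)|$ and the valency of $\Gamma_2$ that appear in the lexicographic case, is an ordinary integer and hence already lies in $\mathbb{Q}\subseteq\mathbb{F}$, where $\mathbb{F}=\mathbb{SF}(\Gamma_1)$.

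First I would record the adjacency matrices and spectra. Let $\lambda$ range over the eigenvalues of $\Gamma_1$ and $\mu$ over those of $\Gamma_2$. From $A(\Gamma_1\square\Gamma_2)=A(\Gamma_1)\otimes I+I\otimes A(\Gamma_2)$, $A(\Gamma_1\otimes\Gamma_2)=A(\Gamma_1)\otimes A(\Gamma_2)$ and $A(\Gamma_1\boxtimes\Gamma_2)=A(\Gamma_1)\otimes A(\Gamma_2)+A(\Gamma_1)\otimes I+I\otimes A(\Gamma_2)$, the eigenvalues of the three products are, respectively, all the numbers $\lambda+\mu$, $\lambda\mu$ and $(\lambda+1)(\mu+1)-1$ (with multiplicities). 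For the lexicographic product, assuming $\Gamma_2$ is connected and $r$-regular on $n_2$ vertices, $A(\Gamma_1[\Gamma_2])=A(\Gamma_1)\otimes J_{n_2}+I\otimes A(\Gamma_2)$; decomposing $\mathbb{R}^{n_2}$ into the all-ones line and its orthogonal complement gives that the eigenvalues are the numbers $n_2\lambda+r$ together with the non-principal eigenvalues $\mu$ of $\Gamma_2$.

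Then the splitting-field computation is short. Since $\mathbb{F}\supseteq\mathbb{Q}$ and each $\mu$ is an integer, adjoining the numbers $\lambda+\mu$ (respectively $n_2\lambda+r$ together with the integers $\mu$, using $n_2\neq 0$) to $\mathbb{Q}$ yields exactly $\mathbb{Q}(\{\lambda\})=\mathbb{F}$, which settles the Cartesian and lexicographic products. For $\otimes$ and $\boxtimes$ one needs one further observation: $\Gamma_2$ is a non-empty graph, so by monotonicity of the spectral radius under taking subgraphs $\lambda_{\max}(\Gamma_2)\geq\lambda_{\max}(K_2)=1$, and since $\Gamma_2$ is integral this is an integer $\mu_0$ with $\mu_0\neq 0$ and $\mu_0+1\neq 0$. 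Every product eigenvalue $\lambda\mu$ (respectively $\lambda(\mu+1)+\mu$) lies in $\mathbb{Q}(\{\lambda\})$ because $\mu\in\mathbb{Z}$, while conversely each $\lambda$ is recovered from $\lambda\mu_0$ by dividing by the nonzero rational $\mu_0$ (respectively from $\lambda(\mu_0+1)+\mu_0$ by subtracting $\mu_0$ and dividing by $\mu_0+1$); hence the splitting field is again exactly $\mathbb{F}$.

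The only step that is more than bookkeeping is this last one: I must guarantee that the ``integral'' factor $\Gamma_2$ cannot annihilate the relevant products, i.e.\ that it genuinely contributes an eigenvalue $\neq 0$ (for $\otimes$) and $\neq -1$ (for $\boxtimes$); this is where non-emptiness of $\Gamma_2$ is essential, and it is handled by the spectral-radius bound above. All the remaining content --- writing down the product adjacency matrices, diagonalizing the Kronecker sums, and the trivial field manipulations using $\mathbb{Z}\subseteq\mathbb{F}$ --- is routine.
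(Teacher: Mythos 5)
Your proof is correct and follows essentially the same route as the paper: both reduce to the standard spectral formulas for the four products (eigenvalues $\lambda+\mu$, $\lambda\mu$, $\lambda+\mu+\lambda\mu$, and $n_2\lambda+r$ together with the non-principal $\mu$) and then use the integrality of the eigenvalues of $\Gamma_2$. You are in fact more careful than the paper, whose proof simply asserts that ``the result follows immediately'': your observation that non-emptiness of $\Gamma_2$ forces $\lambda_{\max}(\Gamma_2)\geq 1$, so that the nonzero integers $\mu_0$ and $\mu_0+1$ let one recover each $\lambda$ from $\lambda\mu_0$ and from the strong-product eigenvalue, is precisely the point needed to upgrade the obvious containment $\mathbb{SF}(\Gamma_1\otimes\Gamma_2)\subseteq\mathbb F$ (and likewise for $\boxtimes$) to the claimed equality of splitting fields.
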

	\begin{proof}
		Let the eigenvalues of $\Gamma_1$ and $\Gamma_2$ be $\lambda_1\geq \lambda_2\geq\cdots\geq\lambda_{n_1}$ and $\mu_1\geq \mu_2\geq\cdots\geq\mu_{n_2}$, respectively, where $n_i$ is the number of vertices of $\Gamma_i$. By \cite[Table 4]{BKPS}, the eigenvalues of $\Gamma_1\square\Gamma_2$, $\Gamma_1\otimes\Gamma_2$ and $\Gamma_1\boxtimes\Gamma_2$ are $\lambda_i+\mu_j$, $\lambda_i\mu_j$, and $\lambda_i+\mu_j+\lambda_i\mu_j$, respectively, where $1\leq i\leq n_1$ and $1\leq j\leq n_2$. 
		
		Let $\Gamma_2$ be a connected $k$-regular graph. Then eigenvalues of $\Gamma_1[\Gamma_2]$ are $k+n_2\lambda_i$, $1\leq i\leq n_1$ and $\mu_j$ with multiplicity $n_1$, where $2\leq j\leq n_2$. Since eigenvalues of $\Gamma_2$ are integers, the result follows immediately.
	\end{proof}
	
	\begin{core}\label{productCay}
		Let $\Gamma_1=\mathrm{Cay}(G_1,S_1)$ and $\Gamma_2=\mathrm{Cay}(G_2,S_2)$ be two undirected connected Cayley graphs such that $\Gamma_1$ is $k$-integral and $\Gamma_2$ is integral. Then $\mathrm{Cay}(G_1\times G_2,S)$ is an undirected connected $k$-integral Cayley graph over $G_1\times G_2$, where $S$ is one of the following sets
		
		\begin{itemize}
			\item[$(1)$] $S=\{(s_1,1),(1,s_2)\mid s_1\in S_1,s_2\in S_2\}$,
			\item[$(2)$] $S=\{(s_1,s_2)\mid s_1\in S_1,s_2\in S_2\}$,
			\item[$(3)$] $S=\{(s_1,1),(1,s_2),(s_1,s_2)\mid s_1\in S_1,s_2\in S_2\}$,
			\item[$(4)$] $S=\{(s_1,g_2), (1,s_2)\mid s_1\in S_1,s_2\in S_2,g_2\in G_2\}$, where at most one of the graphs $\Gamma_1$ or $\Gamma_2$ is bipartite.
		\end{itemize}
		
		In particular, if $|S_i|=d_i$, $i=1,2$, then in the above cases, $|S|$ is $d_1+d_2$, $d_1d_2$, $d_1+d_2+d_1d_2$ and $d_1|G_2|+d_2$, respectively.
		%$G_1$ admits a $d_1$-regular connected Cayley graph $\Gamma_1$ with algebraic degree $k$ and $G_2$ admits a $d_2$-regular connected integral Cayley graph $\Gamma_2$, where %$d_1,d_2\geq 1$, then $G_1\times G_2$ admits a connected $d$-regular Cayley graph with algebraic degree $k$, where $d\in\{d_1+d_2, d_1+d_2+d_1d_2, d_1|G_2|+d_2\}$. Also if at most %one of the graphs $\Gamma_1$ or $\Gamma_2$ is bipartite then $G_1\times G_2$ admits a connected $(d_1d_2)$-regular Caylay graph with algebraic degree $k$.
	\end{core}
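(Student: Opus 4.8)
The plan is to recognise, in each of the four cases, the graph $\mathrm{Cay}(G_1\times G_2,S)$ as a standard graph product of $\Gamma_1$ and $\Gamma_2$, and then to read off connectedness from classical facts and $k$-integrality from Lemma~\ref{productmain}. First I would check that each listed $S$ is an admissible connection set of an undirected loop-free Cayley graph: $S$ is inverse-closed because $S_1=S_1^{-1}$, $S_2=S_2^{-1}$, and $G_2$ is a group (so $S_1\times G_2$ is already inverse-closed), and $1\notin S$ because $1_{G_1}\notin S_1$ and $1_{G_2}\notin S_2$. Then I would unravel adjacency: two vertices $(g_1,g_2)$ and $(h_1,h_2)$ are adjacent in $\mathrm{Cay}(G_1\times G_2,S)$ iff $(h_1g_1^{-1},h_2g_2^{-1})\in S$, and substituting the four sets shows that this is exactly the defining adjacency of $\Gamma_1\square\Gamma_2$ in case~(1), of $\Gamma_1\otimes\Gamma_2$ in case~(2), of $\Gamma_1\boxtimes\Gamma_2$ in case~(3), and of $\Gamma_1[\Gamma_2]$ in case~(4); for case~(4) the key point is that whenever the first coordinate of an element of $S$ lies in $S_1$ the second coordinate is unconstrained, which is precisely the ``blow-up'' rule of the lexicographic product.

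For connectedness I would use that $\Gamma_1$ and $\Gamma_2$, being connected Cayley graphs, are connected and (being vertex-transitive) regular. Case~(1) is the Cartesian product, which is connected because each factor is. The connection set of case~(3) contains that of case~(1), and the connection set of case~(4) contains $S_1\times G_2$, whose Cayley graph over $G_1\times G_2$ is connected (its generating set yields $(1,g_2)$ and $(s_1,1)$, hence all of $G_1\times G_2$); so in cases~(3) and~(4) the graph has a connected spanning subgraph and is therefore connected. The only genuinely non-trivial case is~(2), the tensor product $\Gamma_1\otimes\Gamma_2$: it is classical (Weichsel's theorem) that this is connected exactly when not both factors are bipartite, which is the bipartiteness hypothesis of the statement — one direction being visible directly, since if $\phi_i:G_i\to\mathbb Z_2$ are homomorphisms with $\phi_i(S_i)=\{1\}$ for $i=1,2$, then $S_1\times S_2$ lies in the index-$2$ kernel of $(a,b)\mapsto\phi_1(a)+\phi_2(b)$ and $\mathrm{Cay}(G_1\times G_2,S_1\times S_2)$ is disconnected. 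I expect this to be the only real obstacle.

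Finally, $\Gamma_2$ is integral and $\Gamma_1$ is $k$-integral, i.e.\ $[\mathbb{SF}(\Gamma_1):\mathbb Q]=k$. Since $\Gamma_1$ and $\Gamma_2$ are regular and $\Gamma_2$ is connected (these being exactly the hypotheses needed, including for the lexicographic case), Lemma~\ref{productmain} gives $\mathbb{SF}\big(\mathrm{Cay}(G_1\times G_2,S)\big)=\mathbb{SF}(\Gamma_1)$ in all four cases, so $\deg\big(\mathrm{Cay}(G_1\times G_2,S)\big)=k$ and the graph is $k$-integral. The valency counts are then immediate: the blocks $S_1\times\{1\}$, $\{1\}\times S_2$, $S_1\times S_2$ and $S_1\times G_2$ are pairwise disjoint wherever they co-occur in an $S$ (because $1_{G_1}\notin S_1$ and $1_{G_2}\notin S_2$), so $|S|$ equals $d_1+d_2$, $d_1d_2$, $d_1+d_2+d_1d_2$ and $d_1|G_2|+d_2$ respectively.
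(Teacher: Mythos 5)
Your proof takes essentially the same route as the paper's: the paper simply cites Lemma~\ref{productmain} together with \cite[Corollaries 5.3, 5.6, 5.10, 5.14]{HIK} (connectedness of the products) and \cite[Theorems 3.1, 4.1, 5.3, 6.1]{AL} (identification of the four connection sets with $\Gamma_1\square\Gamma_2$, $\Gamma_1\otimes\Gamma_2$, $\Gamma_1\boxtimes\Gamma_2$ and $\Gamma_1[\Gamma_2]$), which is exactly what you verify by hand. One point worth flagging: you correctly locate the ``at most one factor is bipartite'' condition where Weichsel's theorem needs it, namely for connectedness of the tensor product in case (2), whereas the statement as printed attaches that hypothesis to case (4) (the lexicographic product, which is connected as soon as $\Gamma_1$ is); your reading is the mathematically necessary one, since without it case (2) fails (e.g.\ $G_1=G_2=\mathbb{Z}_2$ with $S_1=S_2$ the nonidentity element yields a disconnected graph).
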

	\begin{proof}
		It is a direct consequence of Lemma \ref{productmain}, \cite[Corollaries 5.3, 5.6, 5.10, 5.14]{HIK} and \cite[Theorems 3.1,  4.1,  5.3,  6.1]{AL}.
	\end{proof}
	
	\begin{core}\label{odd}
		Let $G$ be a finite abelian group of even order. There exists an undirected connected $(2d+1)$-regular Cayley graph over $G$ with algebraic degree $k$ if and only if 
		\begin{itemize}
			\item [$(1)$] there exists an undirected connected $2d$-regular $k$-integral Cayley graph over $G$, or
			\item [$(2)$] there exists a $H\leq G$ such that $G=H\times\mathbb Z_2$ and $H$ admits an undirected connected $2d$-regular $k$-integral Cayley graph.
		\end{itemize} 
	\end{core}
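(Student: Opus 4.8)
The plan is to prove the two implications separately, the ``only if'' direction being essentially an invocation of Lemma~\ref{-1} and the ``if'' direction one of Lemma~\ref{+1}. \emph{Forward direction.} Suppose $\Gamma=\mathrm{Cay}(G,S)$ is a connected $(2d+1)$-regular Cayley graph over $G$ with algebraic degree $k$, and put $\mathbb F=\mathbb{SF}(\Gamma)$, so $[\mathbb F:\mathbb Q]=k$. First I would note that, since $S$ is inverse-closed and does not contain $1$, the non-involutions of $S$ occur in pairs $\{s,s^{-1}\}$, so $|S|=2d+1$ being odd forces $S$ to contain an involution $x$. As $G$ is abelian, $x\in Z(G)$ and $S$ is a normal subset, so Lemma~\ref{-1} applies: either $G=\langle S\setminus\{x\}\rangle$, or $G=\langle S\setminus\{x\}\rangle\times\langle x\rangle\cong\langle S\setminus\{x\}\rangle\times\mathbb Z_2$, and in either case $\mathrm{Cay}(\langle S\setminus\{x\}\rangle,S\setminus\{x\})$ is connected with splitting field $\mathbb F$. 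Since $|S\setminus\{x\}|=2d$, this is a connected $2d$-regular Cayley graph of algebraic degree $k$: over $G$ itself in the first case, which is alternative $(1)$, and over $H:=\langle S\setminus\{x\}\rangle$ with $G\cong H\times\mathbb Z_2$ in the second, which is alternative $(2)$.

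\emph{Converse, alternative $(1)$.} Assume $\mathrm{Cay}(G,T)$ is a connected $2d$-regular Cayley graph of algebraic degree $k$. The one point requiring an argument is that $G$ has an involution lying outside $T$: the subgroup $\{g\in G\mid g^2=1\}$ is elementary abelian of order $2^r$ with $r\geq 1$ (as $|G|$ is even), so $G$ has exactly $2^r-1$ involutions, an \emph{odd} number, whereas $T=T^{-1}$ and $1\notin T$ imply that the number of involutions contained in $T$ has the same parity as $|T|=2d$ and is therefore even. Hence some involution $x$ lies in $G\setminus T$; then $x\in Z(G)$, $[x]=\{x\}$, and $T\cap[x]=\varnothing$, so Lemma~\ref{+1} gives that $\mathrm{Cay}(G,T)$ and $\mathrm{Cay}(G,T\cup\{x\})$ have the same splitting field. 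Since $T\cup\{x\}$ is an inverse-closed generating set of $G$ of size $2d+1$ avoiding $1$, $\mathrm{Cay}(G,T\cup\{x\})$ is the required connected $(2d+1)$-regular graph of algebraic degree $k$.

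\emph{Converse, alternative $(2)$.} Now assume $G=H\times\langle z\rangle$ with $z$ an involution and $\mathrm{Cay}(H,T)$ connected, $2d$-regular, of algebraic degree $k$. Identifying $H$ with $H\times\{1\}\leq G$, the set $T$ becomes a normal subset of $G$, and $x:=(1_H,z)$ is a central involution with $x\notin T$ and $T\cap[x]=\varnothing$. Lemma~\ref{+1} then shows $\mathrm{Cay}(G,T)$ and $\mathrm{Cay}(G,T\cup\{x\})$ share a splitting field; but $\mathrm{Cay}(G,T)$ is a disjoint union of two copies of $\mathrm{Cay}(H,T)$, so that field has degree $k$, while $T\cup\{x\}$ generates $G$, is inverse-closed, avoids $1$, and has size $2d+1$, so $\mathrm{Cay}(G,T\cup\{x\})$ does the job. (Alternatively, this case is immediate from Corollary~\ref{productCay}$(1)$ applied to $\Gamma_1=\mathrm{Cay}(H,T)$ and the integral graph $\Gamma_2=K_2=\mathrm{Cay}(\langle z\rangle,\{z\})$.) I expect the only step needing genuine care to be the parity count in alternative $(1)$; everything else is a direct application of Lemmas~\ref{-1} and~\ref{+1}, together with the elementary fact that an odd-size inverse-closed connection set must contain an involution.
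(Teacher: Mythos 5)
Your proof is correct and follows essentially the same route as the paper: Lemma~\ref{-1} for the forward direction (after observing that an odd-size inverse-closed connection set must contain an involution), and Lemma~\ref{+1} together with the parity count on involutions for the converse. The only cosmetic difference is that in alternative $(2)$ of the converse you argue directly via Lemma~\ref{+1} where the paper invokes Corollary~\ref{productCay}, but you note that route as well, and your explicit parity argument merely fills in a step the paper states tersely.
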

	\begin{proof}
		Note that the number of involutions of any finite abelian group with even order is odd.
		If $\mathrm{Cay}(G,S)$ is connected with algebraic degree $k$ and $|S|=2d$, for some inverse-closed subset $S$ of $G$, then there exists an involution $x\in G\setminus S$. Hence $\mathrm{Cay}(G,S\cup\{x\})$ is connected $(2d+1)$-regular and with algebraic degree $k$ by Lemma \ref{+1}.		
		If $G=H\times\mathbb Z_2$ and $H$ admits a connected $2d$-regular Cayley graph with algebraic degree $k$, since $K_2$ as a Cayley graph over $\mathbb Z_2$ is integral, then Corollary \ref{productCay} implies that $G$ admits a connected $(2d+1)$-regular Cayley graph with algebraic degree $k$.
		
		The converse is clear by Lemma \ref{-1}, because any inverse-closed subset of $G$ with odd number of elements has an involution.
	\end{proof}
	
	%Let $G=G_1	\times G_2$ be the direct product of groups $G_1$ and $G_2$. Then denote the natural epimorphism from $G$ to $G_i$ by $\pi_i$.
	
	\begin{thm}\label{5}
		Let $G$ be a finite abelian group. Then $G\in\mathcal{G}_5$ if and only if $G\cong H$ or $H\times\mathbb Z_2$, where $H$ is one of the groups given in Theorem \ref{4} and in the first case $H\ncong\mathbb Z_{15},\mathbb Z_5^2$.
		Furthermore, a connected $5$-regular abelian undirected Cayley graph $\Gamma$ is $2$-integral if and only if it is isomorphic to one of the  following $108$ Cayley graphs $\mathrm{Cay}(G,S)$, where
		\begin{itemize}
			\item[$\romannumeral1$.] $G=\langle x\rangle \cong \mathbb{Z}_n$ where $n=16,20,24,30$, and $S=\{x,x^{-1},x^{\frac{n}{2}},x^k,x^{-k}\}$ where $1\leq k\leq n$ such that $(k,n)=1$;
			\item[$\romannumeral2$.] $G=\langle x\rangle \cong \mathbb{Z}_8$ and $S=\{x,x^2,x^4,x^6,x^7\}$;		
			\item[$\romannumeral3$.] $G=\langle x\rangle \cong \mathbb{Z}_{10}$ and $S=\{x,x^2,x^5,x^8,x^9\}$;
			\item[$\romannumeral4$.] $G=\langle x\rangle \cong \mathbb{Z}_{12}$ and $S=\{x,x^2,x^6,x^{10},x^{11}\}$ or $\{x,x^3,x^6,x^9,x^{11}\}$ or $\{x,x^4,x^6,x^8,x^{11}\}$;
			\item[$\romannumeral5$.] $G=\langle x\rangle\times \langle w\rangle\cong \mathbb{Z}_n\times \mathbb{Z}_2$ where $n=8,10,12$ and $S=\{x,x^{-1},x^{\frac{n}{2}},w,x^{\frac{n}{2}}w\}$;
			\item[$\romannumeral6$.] $G=\langle x\rangle\times \langle z\rangle\times \langle w\rangle\cong\mathbb{Z}_n\times \mathbb{Z}_2^2$ where $n=5,8,10,12$ and $S=\{x,x^{-1},z,w,s\}$ where $s$ is any involution of $G$ except for $z$ and $w$;
			\item[$\romannumeral7$.] $G=\langle x\rangle\times \langle w\rangle\cong \mathbb{Z}_n\times \mathbb{Z}_m$ where $(n,m)$ is one of the pairs $(3,12)$, $(4,8)$, $(4,10),(4,12)$, $(6,8)$, $(6,10)$, $(6,12)$, $(5,10)$, $(8,8)$, $(10,10)$, $(12,12)$, and $S=\{x,x^{-1},y,y^{-1},s\}$ where $s$ is any involution of $G$.
			\item[$\romannumeral8$.] $G=\langle x\rangle \times \langle y\rangle\cong \mathbb{Z}_n\times \mathbb{Z}_2$ where $n=16,20,24,30$, and $S=\{x,x^{-1},x^k,x^{-k},y\}$ where $1\leq k\leq n$ such that $(k,n)=1$;
			\item[$\romannumeral9$.] $G=\langle x\rangle \times \langle y\rangle\cong \mathbb{Z}_8\times \mathbb{Z}_2$ and $S=\{x,x^2,x^6,x^7,y\}$;		
			\item[$\romannumeral10$.] $G=\langle x\rangle\times \langle y\rangle \cong \mathbb{Z}_{10}\times \mathbb{Z}_2$ and $S=\{x,x^2,x^8,x^9,y\}$;
			\item[$\romannumeral11$.] $G=\langle x\rangle\times \langle y\rangle \cong \mathbb{Z}_{12}\times \mathbb{Z}_2$ and $S=\{x,x^2,x^{10},x^{11},y\}$ or $\{x,x^3,x^9,x^{11},y\}$ or $\{x,x^4,x^8,x^{11},y\}$;
			\item[$\romannumeral12$.] $G=\langle x\rangle\times \langle w\rangle\times \langle y\rangle\cong \mathbb{Z}_n\times \mathbb{Z}_2^2$ where $n=8,10,12$ and $S=\{x,x^{-1},x^{\frac{n}{2}},w,y\}$;
			\item[$\romannumeral13$.] $G=\langle x\rangle\times \langle z\rangle\times \langle w\rangle\times\langle y\rangle\cong \mathbb{Z}_n\times \mathbb{Z}_2^3$ where $n=5,8,10,12$ and $S=\{x,x^{-1},z,w,y\}$;
			\item[$\romannumeral14$.] $G=\langle x\rangle\times \langle w\rangle\times\langle y\rangle\cong \mathbb{Z}_n\times \mathbb{Z}_m\times \mathbb{Z}_2$ where $(n,m)$ is one of the pairs $(3,12)$, $(4,8)$, $(4,10)$, $(4,12)$, $(6,8)$, $(6,10)$, $(6,12)$, $(5,10)$, $(8,8)$, $(10,10)$, $(12,12)$, and $S=\{x,x^{-1},y,y^{-1},y\}$.
		\end{itemize}
	\end{thm}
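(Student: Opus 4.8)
The plan is to deduce Theorem~\ref{5} from the classification in Theorem~\ref{4} together with the odd-valency reduction in Corollary~\ref{odd}. The first observation is that any inverse-closed generating set $S$ with $|S|=5$ must contain an involution: the non-involutions of $S$ pair off as $\{s,s^{-1}\}$, so if $S$ contained no involution then $|S|$ would be even. Hence every $G\in\mathcal G_5$ has even order, which is exactly the hypothesis needed to apply Corollary~\ref{odd} with $2d+1=5$ and $k=2$. That corollary then says: $G$ has a connected $5$-regular $2$-integral Cayley graph if and only if either $G$ itself admits a connected $4$-regular $2$-integral Cayley graph, or $G\cong H\times\mathbb Z_2$ where $H$ does.

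From here the group-theoretic half of the statement is immediate. For the forward direction, if $G\in\mathcal G_5$ then $|G|$ is even and, by Corollary~\ref{odd}, either $G\in\mathcal G_4$ or $G\cong H\times\mathbb Z_2$ with $H\in\mathcal G_4$; in the former case, $G$ being of even order rules out the only two odd-order groups in the list of Theorem~\ref{4}, namely $\mathbb Z_{15}$ and $\mathbb Z_5^2$. For the converse, if $G\cong H$ with $H$ one of the groups of Theorem~\ref{4} and $H\ncong\mathbb Z_{15},\mathbb Z_5^2$, then $|H|$ is even (every remaining group in that list has even order) and $H$ carries a connected $4$-regular $2$-integral Cayley graph, so $G\in\mathcal G_5$ by Corollary~\ref{odd}(1); if $G\cong H\times\mathbb Z_2$ with $H$ in the list, then $G\in\mathcal G_5$ by Corollary~\ref{odd}(2).

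For the explicit list of $108$ Cayley graphs I would run the same case analysis as in the proof of Theorem~\ref{4}, on an inverse-closed generating set $S=\{x,y,z,w,s\}$ with $\mathrm{Cay}(G,S)$ $2$-integral. Since $|S|=5$ is odd, the number of involutions in $S$ is odd, hence $1$, $3$ or $5$. The case of five involutions is impossible: then $G$ is elementary abelian of exponent $2$ and $S$ is a union of five singleton sets $[s_i]$, so each $\mathrm{Cay}(G,S_i)$ is integral, contradicting Corollary~\ref{asli}. If $S$ has exactly one involution $s_0$, then $s_0\in Z(G)$ and Lemma~\ref{-1} reduces the $2$-integrality of $\mathrm{Cay}(G,S)$ to that of the $4$-regular graph $\mathrm{Cay}(\langle S\setminus\{s_0\}\rangle, S\setminus\{s_0\})$, which is governed by Theorem~\ref{4}; conversely, adjoining any admissible central involution to a $4$-regular $2$-integral Cayley graph of Theorem~\ref{4} (using Lemma~\ref{+1}), or performing the direct-product-with-$K_2$ construction of Corollary~\ref{productCay}, keeps the graph connected, $5$-regular and $2$-integral, and this yields the families labelled i--v, vii--xi and xiv. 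If $S$ has exactly three involutions, then $S=\{x,x^{-1}\}\cup\{\text{three involutions}\}$ with $o(x)\geq 3$, and as in Corollary~\ref{asli} the requirement $\{x,x^{-1}\}\subsetneq[x]$ forces $\varphi(o(x))=4$, i.e.\ $o(x)\in\{5,8,10,12\}$; sorting the three involutions according to whether they lie in a $\mathbb Z_2^2$ or $\mathbb Z_2^3$ complement of $\langle x\rangle$ and whether one of them equals $x^{o(x)/2}$ produces families vi, xii and xiii. A routine (if lengthy) check, parallel to the one carried out for Theorem~\ref{4}, confirms that these $108$ graphs are pairwise non-isomorphic and that every $5$-regular $2$-integral abelian Cayley graph occurs among them.

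The main obstacle is the bookkeeping in this last step rather than any conceptual difficulty: one must make sure no admissible $5$-element connection set is missed, that the listed graphs are genuinely pairwise non-isomorphic, and that the two descriptions ``$G\cong H$'' and ``$G\cong H\times\mathbb Z_2$'' are correctly reconciled when $G$ already has a $\mathbb Z_2$ direct factor (for instance $\mathbb Z_{30}\cong\mathbb Z_{15}\times\mathbb Z_2$, which is why $\mathbb Z_{15}$ may appear as an ``$H$'' in the second alternative even though it is excluded from the first). Once Corollary~\ref{odd} and Theorem~\ref{4} are in hand, however, all of this is mechanical.
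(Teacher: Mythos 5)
Your proposal is correct and follows essentially the same route as the paper, whose entire proof is to observe that a $5$-regular graph forces $|G|$ even (handshake lemma) and then invoke Corollary~\ref{odd} together with Theorem~\ref{4}; your parity argument via the involution in an odd-size inverse-closed set is an equivalent way to get evenness. The only quibble is a bookkeeping slip in your sorting of the $108$ families by the number of involutions in $S$: family~v, with $S=\{x,x^{-1},x^{n/2},w,x^{n/2}w\}$, contains three involutions, not one, but this lies within the routine verification you (and the paper) defer.
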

	\begin{proof}
		Note that in any undirected graph, the number of vertices of odd degree is even.
		Thus $G$ must be a group of even order.
		And so this result is a direct consequence of Theorem \ref{4} and Corollary \ref{odd}.
	\end{proof}
	
	Similar to Theorem \ref{4}, one can find all finite abelian groups $G\in\mathcal{G}_6$ and then it is possible to characterize all finite abelian groups $G\in\mathcal{G}_7$. Recursively, one can find all finite abelian groups in $\mathcal{G}_k$ for any given integer $k$. For instance, if the degree is odd, then Corollary \ref{odd} can be used.
	\section{On groups all of whose undirected Cayley graphs of bounded valency are $2$-integral}		
	In this section, we are going to classify the finite groups $G$ that all undirected Cayley graphs $\mathrm{Cay}(G, S)$ are $2$-integral when $2\leq |S|\leq k$ for each integer $k\geq 2$.	
	This leads us to  give the following definition.
	
	\begin{defi}
		For an integer $k\geq 2$, set
		$$\mathcal{B}_k:=\{G\mid \mathrm{Cay}(G,S)\ \text{is $2$-integral whenever $S \subset G$ such that $1_G\notin S=S^{-1}$ and $2\leq |S|\leq k$}\}.$$
		Clearly, $\mathcal{B}_{k+1}\subseteq \mathcal{B}_k$, and if $|G|$ is odd, then $G\in \mathcal{B}_{2k-1}$ if and only if $G\in \mathcal{B}_{2k-2}$.
	\end{defi}
	
	To determine the sets $\mathcal{B}_k$, we start with some basic and useful results about $\mathcal{B}_k$.	
	
	\begin{lem}\label{basic}
		Let $k\geq 2$ be an integer.
		Then the following holds for every $G\in\mathcal{B}_k$.
		\begin{itemize}
			\item[$1)$] Every proper subgroup $H< G$ with $H\ncong \mathbb{Z}_2$ is also in $\mathcal{B}_k$.
			\item[$2)$] For every $g\in G$, the order of $g$ is in $\{1,2,5,10\}$.
				\item[$3)$] The Sylow $5$-subgroup of $G$ is a $5$-group of exponent $5$.
			\item[$4)$] If $|G|$ is even, then the Sylow $2$-subgroup of $G$ is isomorphic to $\mathbb{Z}_2$.
		
		\end{itemize}
		
		Moreover, $|G|=2^i5^j$, where $i=0,1$ and $j\geq 1$, and the Sylow $5$-subgroup of $G$ is normal. 
	\end{lem}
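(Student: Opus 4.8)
The plan is to leverage the structural constraints imposed by membership in $\mathcal{B}_k$ through Lemma \ref{half} and Lemma \ref{union}, starting from $k=2$ since $\mathcal{B}_k\subseteq\mathcal{B}_2$. For part 2), fix $g\in G$ of order $n\geq 3$. Since $G\in\mathcal{B}_2$, I would pick a two-element inverse-closed subset $S\subseteq[g]$, namely $S=\{g,g^{-1}\}$ when $g^2\neq 1$; this generates $\langle g\rangle$, so $\mathrm{Cay}(\langle g\rangle, S)$ is a cycle $C_n$. For $\mathrm{Cay}(G,S)$ to be $2$-integral, Lemma \ref{half} forces $\varphi(n)=4$ together with the existence of an index-$2$ subgroup $H\le\mathrm{Aut}(\langle g\rangle)$ with $\{g,g^{-1}\}=g^H$; but $\{g,g^{-1}\}=g^H$ means $|H|=2$, so $\varphi(n)=2|H|=4$, giving $n\in\{5,8,10,12\}$. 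To cut down to $n\in\{5,10\}$, observe that if $n=8$ or $12$, then $g^2$ has order $4$ or $6$ respectively, and picking $S'=\{g^2,g^{-2}\}$ (an inverse-closed subset of $[g^2]$ generating $\langle g^2\rangle$) gives a cycle $C_4$ or $C_6$, which is integral; but Corollary \ref{asli} requires every $2$-integral normal Cayley graph to have at least one block $S_i$ of the ``latter form'' (with $|S_i|=\varphi(n_i)/2$ and $H$ of index $2$), and a single cycle $C_m$ is $2$-integral iff $\varphi(m)=4$. Actually more directly: $C_8$ has $\mathbb{SF}=\mathbb{Q}(\sqrt2)$, but one checks $C_4$ is integral and the issue is whether order $8$ or $12$ can occur — here I would instead note that $\langle g\rangle\cong\mathbb Z_n$ with $n\in\{8,12\}$ contains an element of order $4$, say $h$, and $\mathrm{Cay}(G,\{h,h^{-1}\})$ is $C_4\sqcup\cdots$, which is integral, hence $1$-integral, not $2$-integral; wait — $C_4=K_{2,2}$ plus isolated... $C_4$ has eigenvalues $2,0,0,-2$, so it is integral, contradicting nothing. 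The genuine obstruction is: if $n=8$, then $\langle g\rangle$ has $\mathbb Z_8<G$ with $\mathbb Z_8\notin\mathbb Z_2$, so $\mathbb Z_8\in\mathcal{B}_2$ by part 1); but then take in $\mathbb Z_8$ the generator and form $C_8$ which is $2$-integral (fine), yet also the element of order $2$ lies in the center and is an involution, and one can combine — the cleanest route is that order $8$ forces an order-$2$ and order-$4$ layered structure incompatible with part 4). So I would prove parts 4) and 2) together: an element of order $4$ gives a cyclic subgroup $\mathbb Z_4\in\mathcal B_2$ (part 1), and in $\mathbb Z_4$ every inverse-closed $S$ with $|S|\ge 2$ is $\{a,a^3\}$ or $\{a,a^2,a^3\}$, giving $C_4$ (integral) or $K_4$ (integral), so $\mathbb Z_4\notin\mathcal B_2$ — contradiction. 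Hence no element has order $4$; similarly order $8,9,12,\dots$ are excluded by passing to the cyclic subgroup and exhibiting an integral Cayley graph of valency in $\{2,3\}$ (for $\mathbb Z_6$: $\{a,a^5\}$ gives $C_6$, integral; so order $6$ is out too, eliminating $n=12$). This yields part 2).

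For part 3), since every element of the Sylow $5$-subgroup $P$ has order dividing $10$ by part 2), and $P$ is a $5$-group, every element of $P$ has order $1$ or $5$, i.e. $P$ has exponent $5$. For part 4), if $|G|$ is even the Sylow $2$-subgroup $Q$ is a $2$-group in which every element has order in $\{1,2\}$ by part 2), so $Q$ is elementary abelian; if $|Q|\ge 4$ then $\mathbb Z_2^2\le G$, and $\mathbb Z_2^2\notin\mathbb Z_2$, so $\mathbb Z_2^2\in\mathcal B_2$ by part 1), but $\mathrm{Cay}(\mathbb Z_2^2,\{a,b\})=C_4$ is integral, not $2$-integral — contradiction; hence $Q\cong\mathbb Z_2$.

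For the final sentence: by parts 2)–4), the only primes dividing $|G|$ are $2$ and $5$ (an element of order $p$ for any other prime $p$ is forbidden by Cauchy's theorem plus part 2), and order $2p$ elements are likewise forbidden), with the $2$-part equal to $2^0$ or $2^1$ and $j\ge 1$ because $\mathcal B_k$ members must actually admit valency-$k\ge 2$ generating Cayley graphs (the order-$5$ or order-$10$ elements needed to realize $2$-integrality force $5\mid|G|$). For normality of the Sylow $5$-subgroup $P$: if $|G|=5^j$ this is trivial; if $|G|=2\cdot 5^j$, then $|G:P|=2$ so $P$ is normal automatically. The main obstacle I anticipate is part 2), specifically the bookkeeping of \emph{which} small cyclic and elementary abelian subgroups to test and verifying in each case that a valency-$2$ or valency-$3$ integral (hence non-$2$-integral) Cayley graph exists — this requires invoking the classification of cyclic groups with integral circulant cubic/quadratic graphs and the recursive descent via part 1), and care is needed for the borderline orders $8$, $10$, $12$ where $\varphi(n)=4$ makes the ``single cycle'' test pass but a sub-configuration fails.
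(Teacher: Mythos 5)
Your proposal lands, after several false starts, on essentially the paper's own argument: restrict to cyclic subgroups, use the classification of $2$-integral cycles (via $\varphi(n)=4$, so $n\in\{5,8,10,12\}$), and eliminate $n=8,12$ by passing to a power of $g$ of order $4$ (or $6$), whose two-element connection set yields an integral $C_4$ (or $C_6$) inside a group all of whose valency-$2$ Cayley graphs must be $2$-integral. Parts 3), 4) and the closing sentence then follow as in the paper (exponent $5$ from part 2); elementary abelian Sylow $2$-subgroup of order $\geq 4$ would be Cayley integral; index $1$ or $2$ gives normality of $P_5$).

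The one genuine omission is part 1): you invoke it at every step (``$\mathbb{Z}_4\in\mathcal{B}_2$ by part 1)'', etc.) but never prove it. The needed observation is that for $S\subseteq H\leq G$ the graph $\mathrm{Cay}(G,S)$ is a disjoint union of $|G:H|$ copies of $\mathrm{Cay}(H,S)$, so the two graphs have the same set of distinct eigenvalues and hence the same splitting field; the exclusion $H\ncong\mathbb{Z}_2$ is only there because $\mathbb{Z}_2$ has no admissible connection set of size $\geq 2$. Two smaller points: your justification of $j\geq 1$ should be made explicit (if $5\nmid|G|$ then by part 2) every nonidentity element is an involution, so $G$ is elementary abelian of exponent $2$, hence Cayley integral, and for $|G|\geq 4$ it carries a valency-$2$ Cayley graph that is integral rather than $2$-integral); and the abandoned detours in the middle of your part 2) argument (the $C_4=K_{2,2}$ digression, the appeal to Corollary \ref{asli} that you then drop) should be deleted, since the argument you finally settle on is the correct and complete one.
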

	\begin{proof}
		Let $G\in\mathcal{B}_k$.
		Suppose $H$ is a proper subgroup of $G$ and $H\ncong \mathbb{Z}_2$.
		Since for a subset $S\subset H\leq G$, the Cayley graph $\mathrm{Cay}(G,S)$ consists of disjoint union  $|G:H|$ copies of $\mathrm{Cay}(H,S)$, we have $1)$. 
		
		Since $G\cong \mathbb{Z}_2$ is Cayley integral, we may assume that $|G|\geq 3$.
		If all non-identity elements of $G$ are involutions, then we have $2)$.
		Next suppose that $G$ has at least one element with order great than $2$, say $g$.
		Then $\mathrm{Cay}(G,\{g,g^{-1}\})$ is $2$-integral and so $\mathrm{Cay}(\langle g\rangle,\{g,g^{-1}\})$ is $2$-integral by 1), which implies $o(g)$,the order of $g$, is in $\{5,8,10,12\}$ by Theorem \ref{2}. If $o(g)=8$ or $12$, then $h:=g^{2}$ or $h:=g^3$ has order $4$, respectively. Now $\mathrm{Cay}(G,\{h,h^{-1}\})$ must be $2$-integral, which means $o(h)\in\{5,8,10,12\}$, a contradiction. This proves $2)$.
		
		% Repeating the above argument, we conclude that the order of $g$ can not be $8$ or $12$ (notice that in the f)
		%If the order of $g$ is $8$, then $g^2\in G$ has order $4$, contradicting to the order of any non-identity and non-involution element of $G$ in $\{5,8,10,12\}$.
		%Same discussion we have the order of $g$ can only be $2,5,10$.
		%This completes the proof of $2)$.
		
		By $2)$ we may assume that $|G|=2^i5^j$,  $i,j\geq 0$. If $j=0$ then, by $2)$, $G$ is $2$-elementary abelian and so it is Cayley integral, a contradiction. Hence we may assume that $j\geq 1$ and again by $2)$, a Sylow $5$-subgroup of $G$ is a $5$-group of exponent $5$, which proves $3)$. Let $i\neq 0$ and  $P_2$ be a Sylow $2$-subgroup  of $G$. Clearly, $P_2\cong \mathbb{Z}_2^i$.
		If $i\geq 2$, then $P_2\cong \mathbb{Z}_2^i\in \mathcal{B}_k$ by $1)$.
		However $\mathbb{Z}_2^i$ is Cayley integral, a contradiction.
		Thus $i=1$, which proves $4)$.
		
		Furthermore, $G=P_5$ if $i=0$ and $|G:P_5|=2$ if $i=1$, which implies that $P_5$ is normal in $G$.
		This completes the proof.
	\end{proof}
	
	\begin{thm}\label{thm:B4 empty}
		$\mathcal{B}_k$ is an empty set for any $k\geq 4$. 
	\end{thm}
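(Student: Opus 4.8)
The plan is to reduce to the single case $k=4$ and, within that, to the cyclic group $\mathbb{Z}_5$. Since $\mathcal{B}_{k+1}\subseteq\mathcal{B}_k$ for every $k\geq 2$, it suffices to prove $\mathcal{B}_4=\varnothing$; the assertion for all larger $k$ is then immediate. So assume for contradiction that $G\in\mathcal{B}_4$. By Lemma \ref{basic} we have $|G|=2^{i}5^{j}$ with $i\in\{0,1\}$ and $j\geq 1$, so $5\mid |G|$ and $G$ contains an element of order $5$, hence a subgroup isomorphic to $\mathbb{Z}_5$. If $G\not\cong\mathbb{Z}_5$, this subgroup is proper and is not isomorphic to $\mathbb{Z}_2$, so Lemma \ref{basic}(1) yields $\mathbb{Z}_5\in\mathcal{B}_4$; if $G\cong\mathbb{Z}_5$ this holds trivially. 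In either case it is enough to show $\mathbb{Z}_5\notin\mathcal{B}_4$.

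For this, take $S=\mathbb{Z}_5\setminus\{0\}$, an inverse-closed subset of $\mathbb{Z}_5$ with $0\notin S$ and $|S|=4$, so $S$ is admissible for membership in $\mathcal{B}_4$. But $\mathrm{Cay}(\mathbb{Z}_5,S)\cong K_5$, whose eigenvalues are $4$ with multiplicity $1$ and $-1$ with multiplicity $4$. Thus $\deg(K_5)=1$, i.e. $K_5$ is integral and not $2$-integral, contradicting $\mathbb{Z}_5\in\mathcal{B}_4$. Therefore $\mathcal{B}_4=\varnothing$, and hence $\mathcal{B}_k=\varnothing$ for all $k\geq 4$.

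There is no real technical obstacle: all the work has already been done in Lemma \ref{basic}, which confines any hypothetical member of $\mathcal{B}_k$ to a group of order $2^{i}5^{j}$ and thereby guarantees a copy of $\mathbb{Z}_5$. The only point to notice is that $\mathbb{Z}_5$ supports the size-$4$ connection set yielding the complete graph $K_5$, whose spectrum is integral rather than of degree $2$. (Instead of invoking Lemma \ref{basic}(1) one could argue directly that, for $S$ contained in a subgroup $H\leq G$, the graph $\mathrm{Cay}(G,S)$ is a disjoint union of copies of $\mathrm{Cay}(H,S)$ and hence has the same splitting field.)
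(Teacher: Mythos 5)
Your proof is correct and follows essentially the same route as the paper: reduce to $k=4$, use Lemma \ref{basic} to locate a subgroup isomorphic to $\mathbb{Z}_5$, and observe that $\mathrm{Cay}(\mathbb{Z}_5,\mathbb{Z}_5\setminus\{0\})\cong K_5$ is integral rather than $2$-integral. Your explicit handling of the edge case $G\cong\mathbb{Z}_5$ (where the subgroup is not proper) is a minor point the paper glosses over, but the argument is the same.
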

	\begin{proof}
		Suppose $G\in \mathcal{B}_k$ for a $k\geq 4$.
		Then $G\in \mathcal{B}_4$.
		By Lemma \ref{basic}, $G$ has at least one subgroup $H=\langle h\rangle \cong \mathbb{Z}_5$. 
		Note that $\mathrm{Cay}(H,H\setminus \{1\})$ is integral, contradicting to Lemma \ref{basic}--1).
		Thus $G\notin \mathcal{B}_k$ for any $k\geq 4$.
		This completes the proof.
	\end{proof}
	
	\begin{pro}\label{pro:D10}
		$D_{2n}\in \mathcal{B}_k$ for $k=2,3$ if and only if $n=5$.
	\end{pro}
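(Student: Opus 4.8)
The plan is to use Lemma~\ref{basic} to pin down the possible structure of $D_{2n}$ when $D_{2n}\in\mathcal{B}_k$ for $k\in\{2,3\}$, and then to verify directly that $D_{10}$ does indeed lie in $\mathcal{B}_2$ and $\mathcal{B}_3$. For the forward direction, suppose $D_{2n}\in\mathcal{B}_2$. Every element of $D_{2n}$ has order in $\{1,2,5,10\}$ by Lemma~\ref{basic}--2); in particular the rotation subgroup $\langle r\rangle\cong\mathbb{Z}_n$ has all element orders dividing a number in $\{1,2,5,10\}$, so $n$ divides $10$. Moreover, by Lemma~\ref{basic}, $|D_{2n}|=2^i5^j$ with $i\in\{0,1\}$; since $|D_{2n}|=2n$ is even we get $i=1$ and $j\ge 1$, forcing $n=5$ (the cases $n=1,2$ are excluded since they give $|G|\le 4$, and $n=10$ would give $|G|=20=2^2\cdot 5$, contradicting $i=1$). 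This handles $k=3$ too, since $\mathcal{B}_3\subseteq\mathcal{B}_2$.

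For the converse, I need to show $D_{10}\in\mathcal{B}_3$, which then gives $D_{10}\in\mathcal{B}_2$ automatically. So I must check that every inverse-closed subset $S$ of $D_{10}=\langle r,f\mid r^5=f^2=1,\ frf=r^{-1}\rangle$ with $1\notin S=S^{-1}$ and $2\le|S|\le 3$ yields a $2$-integral Cayley graph. The reflections are the five elements $f,rf,r^2f,r^3f,r^4f$, each an involution, all lying in a single conjugacy class; the non-identity rotations are $r,r^{-1}$ (one conjugacy class) and $r^2,r^{-2}$ (another), each of order $5$. I would enumerate the possibilities for $S$ by how many reflections and rotations it contains. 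The key tool is that for a normal set $S$, the eigenvalues of $\mathrm{Cay}(D_{10},S)$ are $\chi(S)/\chi(1)$ over $\chi\in\mathrm{Irr}(D_{10})$, and $D_{10}$ has two linear characters and two irreducible characters of degree $2$. When $S$ is a normal (conjugacy-class-closed) subset this is immediate; when $S$ is not normal one can instead use Lemma~\ref{comp} together with Lemma~\ref{lem:abelian-deg} in the cyclic case, or argue via the cases already classified in Theorems~\ref{2} and~\ref{3}. Concretely: $\{r^i,r^{-i}\}$ gives a $5$-cycle, which is $2$-integral by Theorem~\ref{2}; a set consisting of a single reflection together with $\{r^i,r^{-i}\}$ falls under Case~III of Theorem~\ref{3}'s analysis (it generates $D_{10}$ and has the right shape); two reflections generate $D_{10}$ and I would check the resulting cubic or quartic-looking graph has splitting field $\mathbb{Q}(\sqrt5)$; and so on. The degree-$2$ characters evaluated on a conjugacy class of reflections give $0$, and on $\{r^i,r^{-i}\}$ give sums of primitive $5$th roots of unity, which land in $\mathbb{Q}(\sqrt5)$—this is the source of $2$-integrality throughout.

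The main obstacle I anticipate is the bookkeeping in the converse direction: there are several shapes of $S$ (one reflection plus a rotation pair, two reflections, three reflections, a rotation pair plus a reflection, etc.), and for the non-normal ones I cannot directly invoke the character formula, so I must either appeal to the earlier classification theorems or compute eigenvalues of small explicit matrices. The cleanest route is probably to observe that any $S$ with $2\le|S|\le 3$ in $D_{10}$ either (a) is contained in $\langle r\rangle$, handled by Theorem~\ref{2}, or (b) generates $D_{10}$ and decomposes as a disjoint union $S_1\cup S_2$ of the type appearing in Corollary~\ref{asli} / the case analysis of Theorem~\ref{3}, where one piece is a rotation pair with cyclic splitting field $\mathbb{Q}(\sqrt5)$ and the other piece(s) are unions of $[x]$'s contributing integer eigenvalues—so Lemma~\ref{+1} and Lemma~\ref{induced} then force the splitting field to be exactly $\mathbb{Q}(\sqrt5)$, giving $2$-integrality. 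I would present the argument in that structured way rather than graph-by-graph.
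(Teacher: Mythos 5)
Your forward direction is exactly the paper's: quote Lemma \ref{basic} to force $|D_{2n}|=2\cdot 5^{j}$ with all element orders in $\{1,2,5,10\}$, hence $n=5$; that part is fine. The gap is in the ``cleanest route'' you propose for the converse. The tools you want to lean on there --- Corollary \ref{asli}, Lemma \ref{+1}, Lemma \ref{induced}, and the eigenvalue formula $\chi(S)/\chi(1)$ --- all require the relevant sets to be normal (conjugacy\nobreakdash-closed) or the adjoined element to be central, and in $D_{10}$ no reflection is central and no proper nonempty subset of the five reflections is normal. So the claim that $S$ ``decomposes as a rotation pair plus unions of $[x]$'s contributing integer eigenvalues'' fails for every $S$ containing reflections: for instance $\{f,rf\}=[f]\cup[rf]$ is a union of singleton $[x]$'s, yet $\mathrm{Cay}(D_{10},\{f,rf\})$ is a $10$-cycle with irrational eigenvalues, so these pieces do not ``contribute integer eigenvalues.'' Likewise, Case III of Theorem \ref{3} is an abelian-group case and cannot be invoked for $\{r^{\pm i}\}\cup\{\text{reflection}\}$ in the nonabelian $D_{10}$; that graph happens to be the pentagonal prism $C_5\,\square\,K_2$, which you must identify by hand (or via Lemma \ref{productmain}) rather than by citation.

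The case you cannot wave away is $S$ equal to three reflections. This is precisely the configuration that destroys $2$-integrality in $\mathbb{Z}_5^2\rtimes\mathbb{Z}_2$ (Proposition \ref{pro:Z5n}, eigenvalue $\sqrt{4+\sqrt5}$), so no soft structural argument can settle it; it works in $D_{10}$ only because the explicit bi-Cayley eigenvalues $\pm\lvert\zeta^{a}+\zeta^{b}+\zeta^{c}\rvert$ ($\zeta$ a fifth root of unity) turn out to be $\pm 3$ and $\pm\tfrac{1\pm\sqrt5}{2}$, all lying in $\mathbb{Q}(\sqrt5)$. The paper's own proof of the converse is just the assertion ``it can be directly calculated,'' so your fallback --- computing the handful of $10\times 10$ adjacency spectra, or equivalently running through the shapes $\{r^{\pm i}\}$ ($5$-cycles), two reflections ($10$-cycle), $\{r^{\pm i}\}\cup\{\text{reflection}\}$ (pentagonal prism), and three reflections (bi-Cayley over $\mathbb{Z}_5$) --- is the correct and essentially the intended way to finish. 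But the structured argument you say you would prefer to present does not go through as stated.
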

	\begin{proof}
		If $D_{2n}\in \mathcal{B}_k$, then by Lemma \ref{basic} we have $n=5$.
		For the converse direction, it can be directly calculate that $D_{10}\in \mathcal{B}_k$ for $k=2,3$.
	\end{proof}
	 By Theorem \ref{thm:B4 empty}, it is enough to determine the sets $\mathcal{B}_2$ and $\mathcal{B}_3$. In what follows, we will do this.
	\subsection{$\mathcal{B}_2$}
	In this part, we will classify the groups in $\mathcal{B}_2$.
	
	\begin{thm}\label{thm:B2}
		$G\in \mathcal{B}_2$ if and only if $G$ is a $5$-group of exponent $5$ or the Sylow $5$-subgroup of $G$ is a group of exponent $5$ and has index $2$.  
	\end{thm}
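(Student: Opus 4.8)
The plan is to prove the two implications separately, relying heavily on the structural constraints already extracted in Lemma \ref{basic}. For the forward direction, suppose $G\in\mathcal{B}_2$. Lemma \ref{basic} already tells us that $|G|=2^i5^j$ with $i\in\{0,1\}$ and $j\geq 1$, that the Sylow $5$-subgroup $P_5$ is normal and of exponent $5$, and (when $i=1$) that the Sylow $2$-subgroup is isomorphic to $\mathbb{Z}_2$. So the only thing left to pin down in the $i=1$ case is that $P_5$ has index exactly $2$ — which is immediate since $|G:P_5|=2^i=2$. Hence the forward direction is essentially a repackaging of Lemma \ref{basic}: either $i=0$ and $G=P_5$ is a $5$-group of exponent $5$, or $i=1$ and $P_5\trianglelefteq G$ with $|G:P_5|=2$ and $P_5$ of exponent $5$.

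The substance is in the converse. First I would handle the case $G=P$ a $5$-group of exponent $5$. Take any inverse-closed $S\subseteq G\setminus\{1\}$ with $|S|=2$. Since every nonidentity element has order $5$, either $S=\{g,g^{-1}\}$ for some $g$ of order $5$, or $S=\{g,h\}$ with $g^2=h^2=1$ — but the latter is impossible as there are no involutions. So $S=\{g,g^{-1}\}\subseteq[g]$ and $\langle g\rangle\cong\mathbb{Z}_5$; by Lemma \ref{comp} the splitting field of $\mathrm{Cay}(G,S)$ equals that of $\mathrm{Cay}(\langle g\rangle,\{g,g^{-1}\})=C_5$, which is $\mathbb{Q}(\sqrt5)$, so the graph is $2$-integral. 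For the case $i=1$, write $G=P\rtimes\langle t\rangle$ where $t$ is an involution and $P$ has exponent $5$ — but since $\gcd(2,5)=1$ and... actually one must be careful: $G$ need not be abelian, and $t$ need not centralize $P$. Here I would take an arbitrary inverse-closed $S$ with $1\notin S$, $|S|=2$. The elements of $G$ have orders in $\{1,2,5,10\}$ by Lemma \ref{basic}. If $S=\{g,g^{-1}\}$ with $o(g)\in\{5,10\}$, then $\langle g\rangle\cong\mathbb{Z}_5$ or $\mathbb{Z}_{10}$, and Lemma \ref{comp} reduces to $C_5$ or $C_{10}$, both of which have splitting field $\mathbb{Q}(\sqrt5)$ (as recorded in the proof of Theorem \ref{4}), so we are done. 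If $S=\{g,g^{-1}\}$ with $o(g)=2$, that is $S=\{g\}$, contradicting $|S|=2$. The remaining possibility is $S=\{a,b\}$ with $a^2=b^2=1$, $a\neq b$. Then $\mathrm{Cay}(G,S)$ is a disjoint union of copies of $\mathrm{Cay}(\langle a,b\rangle,\{a,b\})$, and $\langle a,b\rangle$ is a dihedral group $D_{2m}$ with $m=o(ab)\in\{1,2,5,10\}$; since $ab\neq1$ and $ab$ is not an involution unless $a,b$ commute. I would split: if $\langle a,b\rangle\cong\mathbb{Z}_2^2$ then $\mathrm{Cay}$ is $C_4$, which is integral — and this is where a contradiction with $\mathcal{B}_2$ would arise, so one must show this case cannot occur, i.e. that $G$ has no Klein four-subgroup. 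That follows because a Klein four-subgroup would force two commuting involutions, whence (by Sylow, as the Sylow $2$-subgroup is $\mathbb{Z}_2$) all involutions in a $2$-group... indeed any two involutions generating a $2$-group lie in a Sylow $2$-subgroup of order $2$ only if they are equal, so $\langle a,b\rangle$ is never a $2$-group of order $4$; thus $o(ab)\in\{5,10\}$ and $\langle a,b\rangle\cong D_{10}$ or $D_{20}$, and $D_{20}$ has exponent-$4$... no, $D_{20}$ has an element of order $10$, fine, but also — here I need $\mathrm{Cay}(D_{2m},\{a,b\})$, the $2m$-cycle, to be $2$-integral, which by Theorem \ref{2} holds iff $2m\in\{5,8,10,12\}$, i.e. $m=5$ (giving the $10$-cycle) or $m=6$ — but $m=6$ is excluded since orders are in $\{1,2,5,10\}$, and $m=4$ (the $8$-cycle) is likewise excluded. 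So $m=5$ and the cycle is $C_{10}$, which is $2$-integral.

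The main obstacle I anticipate is precisely the non-abelian bookkeeping in the $i=1$ converse: ruling out a Klein four-subgroup and controlling the dihedral subgroups $\langle a,b\rangle$ generated by two involutions, making sure that the only orders $o(ab)$ that survive the $\{1,2,5,10\}$ constraint give cycles that Theorem \ref{2} certifies as $2$-integral. A secondary subtlety is verifying that when $|S|=2$ with $S=\{g,g^{-1}\}$ one genuinely lands inside a cyclic subgroup isomorphic to $\mathbb{Z}_5$ or $\mathbb{Z}_{10}$ and not something larger — but this is guaranteed by Lemma \ref{basic}--2). Throughout, the reductions are powered by Lemma \ref{comp} (to pass from $\mathrm{Cay}(G,S)$ to $\mathrm{Cay}(\langle S\rangle,S)$), by Theorem \ref{2} for the cyclic/cycle classification, and by the explicit splitting-field computations $\mathbb{SF}(C_5)=\mathbb{SF}(C_{10})=\mathbb{Q}(\sqrt5)$ recorded earlier. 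No heavy machinery beyond what the excerpt already supplies should be needed; the proof is essentially a careful case analysis of inverse-closed $2$-element connection sets inside a group whose element orders are confined to $\{1,2,5,10\}$.
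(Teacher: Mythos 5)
Your overall strategy coincides with the paper's: the forward direction is read off from Lemma \ref{basic}, and the converse is a case analysis of the two-element inverse-closed sets, reducing $\{g,g^{-1}\}$ to the cycles $C_5$ or $C_{10}$ and the two-involution case to a dihedral subgroup. There is, however, one genuine gap in your two-involution case. You correctly exclude $\langle a,b\rangle\cong\mathbb{Z}_2^2$, but you then record $o(ab)\in\{5,10\}$ and simply conclude ``so $m=5$'' without ever eliminating $m=10$. The element-order constraint $\{1,2,5,10\}$ does \emph{not} exclude $D_{20}$, since every element of $D_{20}$ has order $1$, $2$, $5$ or $10$; and if $o(ab)=10$ did occur, then $\mathrm{Cay}(\langle a,b\rangle,\{a,b\})\cong C_{20}$, which by Theorem \ref{2} is \emph{not} $2$-integral (as $\varphi(20)=8$), so the theorem would fail. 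This case must be closed, not asserted away. The paper does it by observing that $P_5$ has odd order and index $2$, so both involutions lie in the nontrivial coset $aP_5$; hence $xy\in aP_5\cdot aP_5=P_5$ (using normality of $P_5$), so $o(xy)\in\{1,5\}$, and $o(xy)=1$ is impossible since $x\neq y$. Thus $\langle x,y\rangle\cong D_{10}$ and one invokes Proposition \ref{pro:D10}. Alternatively, Lagrange suffices: $|D_{20}|=20$ does not divide $|G|=2\cdot 5^j$.

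A secondary, more cosmetic point: in the converse direction you repeatedly cite Lemma \ref{basic} (for the element orders and for the Sylow $2$-subgroup), but that lemma presupposes $G\in\mathcal{B}_2$, which is exactly what you are trying to prove. The facts you need there do hold, but they must be derived from the hypothesis $|G:P_5|=2$ with $P_5$ of exponent $5$ (e.g.\ $x\notin P_5$ implies $x^2\in P_5$, so $o(x)\in\{2,10\}$), as the paper does, rather than imported from a lemma whose hypothesis is the conclusion.
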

	\begin{proof}
		One direction is clear by Lemma \ref{basic}.
		
		Next suppose $P_5$ is a $5$-group of exponent $5$.
		If $G=P_5$, then for any inverse-closed subset $S$ of $G$, $S=\{g,g^{-1}\}$, where the order of $g$ is $5$, and so $\mathrm{Cay}(\langle g\rangle,\{g,g^{-1}\})$ is $2$-integral by Lemma \ref{2}.
		Hence $\mathrm{Cay}(G,S)$ is $2$-integral.
		
		Suppose $P_5<G$ and $|G:P_5|=2$. 
		Then there exist $a\in G$ such that $G=P_5\cup aP_5=P_5\cup P_5a$. Furthermore, by Lemma \ref{basic}, we may assume that $a^2=1$.
		Next we consider the form of inverse-closed subset $S$ of $G$ with $|S|=2$.
		
		{\bf Case I:} $S=\{x,x^{-1}\}$, where $x\in G$ is not an involution.
		
		If $x\in P_5$, then the order of $x$ is $5$ and so $\mathrm{Cay}(\langle x\rangle,\{x,x^{-1}\})$ is $2$-integral by Lemma \ref{2}.
		Hence $\mathrm{Cay}(G,S)$ is $2$-integral.
		
		Suppose that $x\notin P_5$, that is to say $x=ap=p^{'}a$ for some $p,p'\in P_5$. Then $x^2=p'aap=p'p\in P_5$, which means that the order of $x$ is $5$ or $10$. In both cases $\mathrm{Cay}(\langle x\rangle,\{x,x^{-1}\})$ is $2$-integral and so $\mathrm{Cay}(G,S)$ is $2$-integral.

	%	Clearly, $p'\neq p^{-1}$, otherwise $x^2=1_G$, a contradiction. 
	%	If $p=p'$, then the order of $x$ is $10$ and so $\mathrm{Cay}(\langle x\rangle,\{x,x^{-1}\})$ and so $\mathrm{Cay}(G,S)$ is $2$-integral.
	%	If $p'\neq p$, then $x^2=p'aap=p'p\in P_5$.
	%	Since the order of $p'p$ is $5$, and so the order of $x$ is $5$ or $10$.
	%	Then $\mathrm{Cay}(\langle x\rangle,\{x,x^{-1}\})$ is $2$-integral, $\mathrm{Cay}(G,S)$ is $2$-integral.
		
		{\bf Case II:} $S=\{x,y\}$, where $x,y\in G$ are both involution.
		
		In this case, $x=ap=p^{-1}a$ and $y=ap'^{-1}=p'a$ for some $p,p'\in P_5$. Note that $xy\neq yx$, otherwise $\langle x,y\rangle\cong \mathbb{Z}_2^2\leq G$ which is impossible by Lemma \ref{basic}--3). Furthermore, $xy\in P_5$. Hence either $xy=1$ or $xy$ has order $5$. The first case is impossible, otherwise $x=y^{-1}$ which is a contradiction. Hence $xy$ has order $5$ and  $\langle x,y\rangle\cong D_{10}$.
		By Proposition \ref{pro:D10}, $D_{10}\in\mathcal{B}_2$.
		Hence $\mathrm{Cay}(G,S)$ is $2$-integral, which completes the proof.
	\end{proof}
	%	Clearly, $p'\neq p^{-1}$.
	%	Then $xy=paap'=pp'\in P_5$ has order $5$ and so $\langle x,y\rangle\cong D_{10}$.
	%	By Proposition \ref{pro:D10}, $D_{10}\in\mathcal{B}_2$.
	%	Hence $\mathrm{Cay}(G,S)$ is $2$-integral, which completes the proof.
%	\end{proof}
	
	\subsection{$\mathcal{B}_3$}
	In this part, we will classify the groups in $\mathcal{B}_3$. First we consider the nilpotent groups.
	\begin{pro}\label{pro:nipoltentB3}
		If $G$ is a nilpotent group, then $G\in \mathcal{B}_3$ if and only if $G\cong P_5$ or $P_5\times \mathbb{Z}_2$, where $P_5$ is a $5$-group of exponent $5$.
	\end{pro}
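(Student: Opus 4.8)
The plan is to treat the two implications separately. The forward direction will follow almost immediately from the structural facts already in hand, and the converse will come down to a short enumeration of the inverse-closed connection sets of size $2$ and $3$.

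For the forward direction, assume $G$ is nilpotent and $G\in\mathcal{B}_3$. Since $\mathcal{B}_3\subseteq\mathcal{B}_2$, Lemma \ref{basic} applies: $|G|=2^i5^j$ with $i\in\{0,1\}$ and $j\geq1$, the Sylow $5$-subgroup $P_5$ is normal of exponent $5$, and (when $i=1$) the Sylow $2$-subgroup is isomorphic to $\mathbb{Z}_2$. A nilpotent group is the internal direct product of its Sylow subgroups, so $G\cong P_5$ when $i=0$ and $G\cong P_5\times\mathbb{Z}_2$ when $i=1$, as required. Note that nilpotency is genuinely used here: $D_{10}\in\mathcal{B}_3$ by Proposition \ref{pro:D10}, yet it has neither of these forms.

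For the converse, if $G\cong P_5$ then $|G|$ is odd, so by the remark following the definition of $\mathcal{B}_k$ one has $G\in\mathcal{B}_3$ if and only if $G\in\mathcal{B}_2$, and $P_5\in\mathcal{B}_2$ by Theorem \ref{thm:B2}. Now suppose $G\cong P_5\times\mathbb{Z}_2$ and let $\langle t\rangle$ be the $\mathbb{Z}_2$-factor. Since $P_5$ has exponent $5$, every element of $G$ has order in $\{1,2,5,10\}$, and the only involution of $G$ is the central element $t$. Let $S\subseteq G$ be inverse-closed with $1_G\notin S$ and $2\leq|S|\leq3$. If $|S|=2$, the possibility that $S$ consists of two distinct involutions is excluded, so $S=\{g,g^{-1}\}$ with $g$ not an involution, $\langle g\rangle$ is cyclic of order $5$ or $10$, and $\mathrm{Cay}(\langle g\rangle,S)$ is $C_5$ or $C_{10}$. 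If $|S|=3$, the permutation $s\mapsto s^{-1}$ of $S$ has a fixed point, which must be $t$, so $S=\{t,g,g^{-1}\}$ with $o(g)\in\{5,10\}$; when $o(g)=10$ we have $g^5=t$, hence $S\subseteq\langle g\rangle\cong\mathbb{Z}_{10}$ and $\mathrm{Cay}(\langle g\rangle,S)=\mathrm{Cay}(\mathbb{Z}_{10},\{x,x^{-1},x^5\})$, and when $o(g)=5$ we have $g\in P_5$ and $t\notin\langle g\rangle$, so $\langle S\rangle=\langle g\rangle\times\langle t\rangle\cong\mathbb{Z}_5\times\mathbb{Z}_2$ and $\mathrm{Cay}(\langle S\rangle,S)=C_5\,\square\,K_2$.

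In each of these cases $\mathrm{Cay}(\langle S\rangle,S)$ is $2$-integral: $C_5$ and $C_{10}$ by Theorem \ref{2}, $\mathrm{Cay}(\mathbb{Z}_{10},\{x,x^{-1},x^5\})$ by Theorem \ref{3}, and $C_5\,\square\,K_2$ by Corollary \ref{productCay} (or by Lemma \ref{productmain}, using $\mathbb{SF}(C_5)=\mathbb{Q}(\sqrt5)$ and the integrality of $K_2$). Because $\mathrm{Cay}(G,S)$ is a disjoint union of $|G:\langle S\rangle|$ copies of $\mathrm{Cay}(\langle S\rangle,S)$, the two graphs have the same set of eigenvalues, so $\mathrm{Cay}(G,S)$ is $2$-integral and therefore $G\in\mathcal{B}_3$. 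The only mildly delicate step — the main obstacle, such as it is — is verifying that the above list of connection sets is exhaustive, in particular that a size-$3$ inverse-closed set avoiding $1_G$ must contain the central involution $t$; this is exactly where uniqueness of the involution enters, and everything else is a direct appeal to Theorems \ref{2} and \ref{3} together with the product lemmas of Section 5.
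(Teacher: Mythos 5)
Your proof is correct and follows essentially the same route as the paper: the forward direction via Lemma \ref{basic} combined with the Sylow decomposition of a nilpotent group, and the converse via the same enumeration of inverse-closed sets of size $2$ and $3$, using the uniqueness of the involution in $P_5\times\mathbb{Z}_2$ and reducing to $\mathrm{Cay}(\langle S\rangle,S)$. The only divergence is the subcase $S=\{t,g,g^{-1}\}$ with $o(g)=5$, where you identify the component as $C_5\,\square\,K_2$ and invoke Corollary \ref{productCay} (or Lemma \ref{productmain}), whereas the paper identifies $\langle S\rangle$ with $\mathbb{Z}_{10}$ and cites Theorem \ref{3}(i); your justification is, if anything, the more careful one, since the pentagonal prism is not literally the graph appearing in Theorem \ref{3}(i).
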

	\begin{proof}
		Suppose $G$ is a nilpotent group.
		If $G\in \mathcal{B}_3$, then by Lemma \ref{basic}, $G\cong P_5$ or $P_5\times \mathbb{Z}_2$, where $P_5$ is a $5$-group of exponent $5$.
		
		Next suppose $G\cong P_5$ or $G\cong \mathbb{Z}_2\times P_5$, where $P_5$ is a $5$-group of exponent $5$.
		First assume that $G\cong P_5$.
		Then by Theorem \ref{thm:B2}, $P_5\in \mathcal{B}_2$.
		Since $|P_5|$ is odd, we have $G\in \mathcal{B}_3$.
		Assume that $G\cong \mathbb{Z}_2\times P_5$, and $S$ is an inverse-closed subset of $G$ such that $1_G\notin S$ where $|S|=2$ or $3$.
		Note that the order of any non-identity element of $G$ is $2,5$ or $10$.
		If $|S|=2$, then $S=\{g,g^{-1}\}$, where the order of $g$ is $5$ or $10$.
		Furthermore, $\langle S\rangle=\langle g\rangle\cong \mathbb{Z}_5$ or $\mathbb{Z}_{10}$.
		By Lemma \ref{2}, $\mathrm{Cay}(\langle S\rangle,S)$ is $2$-integral.
		Hence $\mathrm{Cay}(G,S)$ is $2$-integral.
		If $|S|=3$, then $S=\{a,g,g^{-1}\}$, where $a^2=1_G$ and the order of $g$ is $5$ or $10$.	
		Furthermore, $\langle S\rangle=\langle a\rangle\times \langle g\rangle \cong \mathbb{Z}_5\times \mathbb{Z}_2=\mathbb{Z}_{10}$.
		By Theorem \ref{3}--$i)$, $\mathrm{Cay}(\langle S\rangle,S)$ is $2$-integral.
		Hence $\mathrm{Cay}(G,S)$ is $2$-integral.
		This completes the proof.
	\end{proof}
	
	In what follows, we consider the non-nilpotent groups in $\mathcal{B}_3$.
	\begin{pro}\label{pro:Z5n}
		Suppose $G=\mathbb{Z}_5^2\rtimes \mathbb{Z}_2$ is not nilpotent.
		Then $G\notin \mathcal{B}_3$.
		Moreover, $G=\mathbb{Z}_5^n\rtimes \mathbb{Z}_2\notin \mathcal{B}_3$ for any $n\geq 2$.
	\end{pro}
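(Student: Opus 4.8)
The plan is to settle the case $n=2$ by an explicit computation and then reduce the case $n\geq 3$ to it by passing to a subgroup of order $50$. For $n=2$, write $P=\mathbb{Z}_5^2$ for the (normal) Sylow $5$-subgroup, let $a$ be the involution with $G=P\rtimes\langle a\rangle$, and let $\varphi\in\mathrm{Aut}(P)$ be conjugation by $a$; here $G$ being non-nilpotent means $\varphi\neq 1$, and $\varphi^2=1$. Since $5$ is odd, $\varphi$ is diagonalizable over $\mathbb{F}_5$, so $P=V_+\oplus V_-$ with $V_\pm$ the $\pm 1$-eigenspaces and $V_-\neq 0$. Thus either (i) $\dim V_+=\dim V_-=1$, i.e.\ $G=(\langle u\rangle\times\langle v\rangle)\rtimes\langle a\rangle$ with $a$ fixing $u$ and inverting $v$, or (ii) $V_+=0$, i.e.\ $G=\mathrm{Dih}(\mathbb{Z}_5^2)$ is generalized dihedral. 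In each case it suffices to exhibit one inverse-closed $S$ with $1_G\notin S$, $|S|=3$ and $\mathrm{Cay}(G,S)$ not $2$-integral.

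For the computation I would use Clifford theory: the irreducible representations of $G=P\rtimes\langle a\rangle$ are the $1$-dimensional ones (factoring through $G^{\mathrm{ab}}$) together with $2$-dimensional ones $\rho=\mathrm{Ind}_P^G\psi$ for characters $\psi$ of $P$ with $\psi^\varphi\neq\psi$, realized in a basis by $\rho|_P=\mathrm{diag}(\psi,\psi^\varphi)$ and $\rho(a)=\left(\begin{smallmatrix}0&1\\1&0\end{smallmatrix}\right)$; the spectrum of $\mathrm{Cay}(G,S)$ is the union of the spectra of the matrices $\sum_{s\in S}\rho(s)$. In case (i) I take $S=\{a,\,uv,\,(uv)^{-1}\}$, and in case (ii) $S=\{a,\,e_1a,\,e_2a\}$ for a basis $e_1,e_2$ of $P$; a one-line check shows $\langle S\rangle=G$ and $|S|=3$ in both cases (in (ii) all three elements are involutions). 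Evaluating $\sum_{s\in S}\rho(s)$ in a suitably chosen $2$-dimensional $\rho$ gives a $2\times 2$ matrix whose characteristic polynomial has an eigenvalue $\mu$ with $(\mu-r)^2\in\mathbb{Q}(\sqrt5)$ for a rational $r$, of the form $p+q\sqrt5$; choosing $\psi$ so that this is a non-square in $\mathbb{Q}(\sqrt5)$ forces $\mu$ to have an irreducible quartic minimal polynomial over $\mathbb{Q}$. Concretely, in case (i) one lands on $\mu^4-3\mu^3-3\mu^2+11\mu-1$, and in case (ii) on $\mu^4-8\mu^2+11$; each is easily seen to be irreducible over $\mathbb{Q}$ (no rational root, no factorization into monic integer quadratics). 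Hence $4\mid \deg(\mathrm{Cay}(G,S))$, so $\mathrm{Cay}(G,S)$ is not $2$-integral and $G\notin\mathcal{B}_3$.

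For the ``moreover'' part, let $G=\mathbb{Z}_5^n\rtimes\mathbb{Z}_2$ with $n\geq 2$ and non-trivial action, and keep $P=V_+\oplus V_-$ with $V_-\neq 0$. If $\dim V_+\geq 1$, pick $0\neq u\in V_+$, $0\neq v\in V_-$ and set $H=\langle u,v,a\rangle$; if $V_+=0$, pick independent $v_1,v_2\in V_-$ and set $H=\langle v_1,v_2,a\rangle$. In either case $H\cong\mathbb{Z}_5^2\rtimes\mathbb{Z}_2$ with non-trivial action and $|H|=50$, so $H\notin\mathcal{B}_3$ by the first part. If $n=2$ then $H=G$ and we are done; if $n\geq 3$ then $H$ is a proper subgroup of $G$ with $H\ncong\mathbb{Z}_2$, so $G\in\mathcal{B}_3$ would force $H\in\mathcal{B}_3$ by Lemma~\ref{basic}~$1)$, a contradiction. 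Therefore $G\notin\mathcal{B}_3$ for all $n\geq 2$.

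The main obstacle I anticipate is that no inverse-closed subset $S$ of size $3$ is a \emph{normal} subset of $G$ (the reflections form a single large conjugacy class, and the size-$3$ inverse-closed unions of classes inside $P$ do not exist since $P$ has no involutions), so the convenient eigenvalue formula $\chi(S)/\chi(1)$ is unavailable; the argument must instead compute $\sum_{s\in S}\rho(s)$ inside the $2$-dimensional induced representations and then verify that the resulting quadratic irrationality $p+q\sqrt5$ is a non-square in $\mathbb{Q}(\sqrt5)$ — equivalently, that the associated degree-$4$ polynomial is $\mathbb{Q}$-irreducible. Handling the two non-nilpotent actions separately (since they force genuinely different choices of $S$) is the other point requiring care.
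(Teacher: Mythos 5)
Your proof is correct, and it is essentially the same strategy as the paper's (exhibit an explicit inverse-closed $S$ with $|S|=3$, compute an eigenvalue of degree $4$ over $\mathbb{Q}$ by representation theory, then reduce $n\geq 3$ to $n=2$ via the subgroup lemma), but the execution differs in two worthwhile ways. First, the paper asserts that a non-nilpotent $\mathbb{Z}_5^2\rtimes\mathbb{Z}_2$ \emph{is} the generalized dihedral group over $\mathbb{Z}_5^2$ and only treats that case (with $S=\{bx,b^2x,ax\}$, using the bi-Cayley eigenvalue formula $\pm|\chi_p(T)|$ from the cited references to land on $\sqrt{4+\sqrt{5}}$, whose minimal polynomial is exactly your $\mu^4-8\mu^2+11$); the other non-nilpotent type, $\mathbb{Z}_5\times D_{10}$, is deferred to a later lemma where a separate connection set is produced. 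Your eigenspace decomposition $P=V_+\oplus V_-$ cleanly covers both actions inside the proposition itself, and your case (i) polynomial $\mu^4-3\mu^3-3\mu^2+11\mu-1$ does check out (it is the product of the two conjugate characteristic polynomials of your $2\times 2$ matrices and is irreducible over $\mathbb{Z}$), so your version proves the statement as literally written rather than for one isomorphism type. Second, your reduction for $n\geq 3$ is more careful than the paper's bare claim that $\mathbb{Z}_5^2\rtimes\mathbb{Z}_2\leq\mathbb{Z}_5^n\rtimes\mathbb{Z}_2$: you actually select a rank-$2$ subgroup on which the involution still acts non-trivially, which is the point needing justification. The only blemish is the phrase ``$(\mu-r)^2\in\mathbb{Q}(\sqrt 5)$ for a rational $r$'' in case (i), where $r=\frac{3-\sqrt5}{4}$ is not rational; this does not affect the argument since the explicit quartic and its irreducibility carry the proof. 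The paper's route buys brevity by outsourcing the spectrum to the bi-Cayley machinery; yours buys self-containedness and completeness of the case analysis.
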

	\begin{proof}
		Suppose $G=\mathbb{Z}_5^2\rtimes \mathbb{Z}_2\cong (\langle a\rangle \times \langle b\rangle) \rtimes \langle x\rangle$, where $a^5=b^5=x^2=1_G$.
		Write $P=\langle a\rangle \times \langle b\rangle$.
		Actually, $G$ is the generalized dihedral group over $P$, that is, $p^x=p^{-1}$ for all $p\in P$.
		
		Let $S=\{bx,b^2x,ax\}$.
		Clearly $S$ is inverse-closed.
		Next we will show that $\mathrm{Cay}(G,S)$ is not $2$-integral.
		Note that $\mathrm{Cay}(G,S)$ is isomorphic to $\mathrm{BiCay}(P,\emptyset,\emptyset,T)$, where $T=\{b,b^2,a\}$, according \cite[Lemma 8]{AT}.
		By \cite[Theorem 3.2]{bicay}, the eigenvalues of $\mathrm{BiCay}(P,\emptyset,\emptyset,T)$ are $\pm |\chi_p(b+b^2+a)|$, where $p\in P$ and $\chi_p$'s are irreducible characters of $P$, see also \cite[Section 3.]{A-PST}.
		However, one can calculate that $\pm |\chi_{b^4}(b+b^2+a)|=\sqrt{4+\sqrt{5}}$.
		Thus $\mathrm{BiCay}(P,\emptyset,\emptyset,T)$ is not $2$-integral.
		Hence $G\notin \mathcal{B}_3$.
		
		On the other hand, $\mathbb{Z}_5^2\rtimes \mathbb{Z}_2\leq \mathbb{Z}_5^n\rtimes \mathbb{Z}_2$ for $n\geq 2$.
		By Lemma \ref{basic}--1), if $\mathbb{Z}_5^n\rtimes \mathbb{Z}_2\in\mathcal{B}_3$, then $\mathbb{Z}_5^2\rtimes \mathbb{Z}_2\in\mathcal{B}_3$, a contradiction.
		Thus $\mathbb{Z}_5^n\rtimes \mathbb{Z}_2\notin \mathcal{B}_3$ for any $n\geq 2$.
	\end{proof}
	
	The structure of minimal non-abelian group of exponent $5$ is given in the following lemma. To have a full characterization of non-nilpotent groups in $\mathcal{B}_3$, we need this lemma.
	\begin{lem}\cite{Redei}\label{lem:G5}
		The minimal non-abelian group of exponent $5$ is $G_5=\langle a,b,c\mid a^5=b^5=c^5=1,~[a,b]=c,~[a,c]=[b,c]=1\rangle$.
	\end{lem}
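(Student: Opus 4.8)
Here ``minimal non-abelian'' means a non-abelian group all of whose proper subgroups are abelian. The plan is to read off the structure of such a group $G$ of exponent $5$ from the standard properties of minimal non-abelian $p$-groups, and then to check that $G_5$ really has these properties. Since every non-identity element of $G$ has order $5$, Cauchy's theorem forces $|G|$ to be a power of $5$, so $G$ is a $5$-group. First I would show $\Phi(G)\le Z(G)$: for any $g\in G$, since $G$ is non-cyclic we have $\langle g\rangle\ne G$, so by the non-generator property of the Frattini subgroup $\langle g,\Phi(G)\rangle\ne G$; hence $\langle g,\Phi(G)\rangle$ is a proper, therefore abelian, subgroup, and $g$ centralizes $\Phi(G)$. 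Next, $G/\Phi(G)$ has rank exactly $2$: it is non-cyclic (else $G$ would be cyclic), and if its rank were $\ge 3$ then for a minimal generating set $x_1,\dots,x_d$ with $d\ge 3$ each $\langle x_i,x_j,\Phi(G)\rangle$ would be a proper, hence abelian, subgroup, forcing all $[x_i,x_j]=1$ and, since $\Phi(G)\le Z(G)$, making $G$ abelian. So $G=\langle x,y\rangle$. As $G'\le\Phi(G)\le Z(G)$, the group $G$ has nilpotency class at most $2$; bilinearity of commutators in class $2$ gives $[x^iy^j,x^ky^l]=c^{il-jk}$ with $c:=[x,y]$, so $G'=\langle c\rangle$ is cyclic, and $c\ne 1$ together with $c^5=1$ forces $|c|=5$.

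Then I would compute the order. Since $G$ has exponent $5$ we have $G^5=1$, so $\Phi(G)=G^5G'=\langle c\rangle$ has order $5$, whence $|G|=|G/\Phi(G)|\cdot|\Phi(G)|=5^2\cdot 5=125$. Because $c$ is central, every element of $G$ can be brought to the form $x^iy^jc^k$ with $0\le i,j,k\le 4$; there are at most $125$ such words, so by counting the assignment $(i,j,k)\mapsto x^iy^jc^k$ is a bijection onto $G$, and the relations $x^5=y^5=c^5=1$, $[x,y]=c$, $[x,c]=[y,c]=1$ then determine the multiplication completely. Hence $G\cong G_5$. Equivalently, the group defined by the presentation in the statement has order at most $125$ by the same word count, it surjects onto $G$ via $x,y\mapsto$ the chosen generators, and is therefore isomorphic to $G$.

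For the converse I would exhibit $G_5$ concretely as the Heisenberg group of upper unitriangular $3\times 3$ matrices over $\mathbb{F}_5$, with $a,b$ the two corner transvections and $c=[a,b]$ the central one: this group has order $5^3=125$, is non-abelian, and every proper subgroup has order dividing $25$ and is hence abelian, so $G_5$ is genuinely minimal non-abelian. It remains to check that $G_5$ has exponent $5$: since $c=[a,b]$ is central, the class-$2$ power identity gives $(a^ib^jc^k)^5=a^{5i}b^{5j}c^{5k-ij\binom{5}{2}}=1$ because $a^5=b^5=c^5=1$ and $5\mid\binom{5}{2}=10$ (equivalently, the $(1,3)$-entry of a fifth power of a unitriangular matrix is $5z+10xy\equiv 0\pmod 5$). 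This verification of exponent $5$ is the only subtle point, and it is exactly why exponent $5$ occurs here, in contrast with $p=2$, where $\binom{2}{2}=1$ and the analogous extraspecial group has exponent $4$; the structural reduction of the first paragraph, by contrast, is routine once $\Phi(G)\le Z(G)$ has been secured.
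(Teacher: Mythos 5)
Your argument is correct, but it cannot be matched against anything in the paper: the statement is quoted from R\'edei's classification of minimal non-abelian groups and no proof is given in the text, so what you have produced is a self-contained derivation of the one special case the authors actually need. Your structural reduction is sound at every step: exponent $5$ plus Cauchy forces $G$ to be a $5$-group; the non-generator property gives $\Phi(G)\le Z(G)$; the Burnside basis theorem together with minimal non-abelianity forces $G/\Phi(G)$ to have rank exactly $2$; class at most $2$ and bilinearity of commutators give $G'=\langle c\rangle$ of order $5$, so $\Phi(G)=G^5G'=\langle c\rangle$ and $|G|=125$; and the normal-form count $x^iy^jc^k$ identifies $G$ with the presented group. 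The converse verification via the Heisenberg group over $\mathbb{F}_5$ is also complete, and you correctly isolate the only delicate point, namely that $(a^ib^jc^k)^5=a^{5i}b^{5j}c^{5k-ij\binom{5}{2}}=1$ because $5\mid\binom{5}{2}$ --- precisely the feature that distinguishes odd $p$ from $p=2$. The trade-off between your route and the paper's is the usual one: citing R\'edei buys the full classification of minimal non-abelian $p$-groups (the metacyclic and non-metacyclic families together with $Q_8$) with no work, while your argument is elementary and short but covers only the exponent-$p$ case, which is all that is used in Lemma \ref{lem:G5Z2} and Proposition \ref{pro:not nilpotent}. One caveat worth recording: the phrase ``the minimal non-abelian group of exponent $5$'' is ambiguous between ``the non-abelian group of exponent $5$ of smallest order'' and ``a non-abelian group of exponent $5$ all of whose proper subgroups are abelian''; you adopt the second reading, which is the one consistent with the R\'edei citation and with how the lemma is applied later, and your proof in fact shows the two readings coincide, since any group of the second kind must have order exactly $125$.
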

	 Keeping the notation of Lemma \ref{lem:G5}, we get the following result.
	\begin{lem}\label{lem:G5Z2}
		Suppose	$G=G_5\rtimes \mathbb{Z}_2$ is not nilpotent.
		Then $G$ is isomorphic to one of the following groups:
		\begin{itemize}
			\item[$1)$] $G_{250,1}\cong ((\langle a\rangle \times \langle b\rangle)\rtimes \langle c\rangle)\rtimes \langle d\rangle$, where $a^5=b^5=c^5=d^2=1_G$, $a^d=a^4$, $b^d=b^4$, $c^d=c$ and $b^c=ab$;
			\item[$2)$] $G_{250,2}\cong ((\langle a\rangle \times \langle b\rangle)\rtimes \langle c\rangle)\rtimes \langle d\rangle$, where $a^5=b^5=c^5=d^2=1_G$, $a^d=a$, $b^d=b^4$, $c^d=c^4$ and $b^c=ab$.
		\end{itemize}
		Moreover, $G_{250,1}$ and $G_{250,2}$ are not contained in $\mathcal{B}_3$.
	\end{lem}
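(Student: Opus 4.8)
The plan is to split the statement into its classification half and its non\nobreakdash-membership half, reducing as much as possible to Proposition~\ref{pro:Z5n} and Lemma~\ref{basic}. Write $G=G_5\rtimes_\phi\mathbb Z_2$ for a homomorphism $\phi\colon\mathbb Z_2\to\mathrm{Aut}(G_5)$. Since $\gcd(2,|G_5|)=1$, a nilpotent $G$ would be the internal direct product of its Sylow subgroups, i.e.\ $\phi$ would be trivial; so for non-nilpotent $G$ the automorphism $\alpha:=\phi(1)$ has order $2$. Moreover $G_5$ is the unique (hence characteristic) Sylow $5$-subgroup of $G$, all complements are conjugate by Schur--Zassenhaus, and $\mathrm{Aut}(\mathbb Z_2)=1$, so a short standard argument gives $G_5\rtimes_\phi\mathbb Z_2\cong G_5\rtimes_{\phi'}\mathbb Z_2$ iff $\phi(1)$ and $\phi'(1)$ are conjugate in $\mathrm{Aut}(G_5)$. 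It therefore suffices to classify involutions of $\mathrm{Aut}(G_5)$ up to conjugacy: both $G_5'=Z(G_5)\cong\mathbb Z_5$ and $G_5/G_5'\cong\mathbb Z_5^2$ are $\alpha$-invariant; $\alpha$ acts on $G_5/G_5'$ by some $M\in GL_2(\mathbb F_5)$ with $M^2=I$, and on $G_5'$ by $z\mapsto z^{\det M}$ (the commutator pairing on $\mathbb Z_5^2$ being the determinant form). An involution of $GL_2(\mathbb F_5)$ is conjugate to $-I$ or to $\mathrm{diag}(1,-1)$, while $M=I$ is impossible (it forces $\alpha$ into the group of central automorphisms, which has odd order $25$, so $\alpha=1$). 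A brief computation with the central automorphisms --- over which $\alpha$ acts invertibly --- then shows that each of $M=-I$ and $M=\mathrm{diag}(1,-1)$ carries exactly one conjugacy class of involutions; unwinding the two semidirect products and choosing generators yields the presentations of $G_{250,1}$ (the involution inverts $Z(G_5)$) and $G_{250,2}$ (the involution centralizes $Z(G_5)$), which are non-isomorphic, e.g.\ because $G_{250,1}$ has $25$ involutions while $G_{250,2}$ has $9$.

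For $G_{250,1}\notin\mathcal B_3$ one can use a subgroup shortcut: in $G_{250,1}$ the involution $d$ inverts the elementary abelian subgroup $\langle a\rangle\times\langle b\rangle\cong\mathbb Z_5^2$ of $G_5$, so $\langle a,b,d\rangle$ is a copy of the generalized dihedral group $\mathbb Z_5^2\rtimes\mathbb Z_2$ --- a proper subgroup of $G_{250,1}$ not isomorphic to $\mathbb Z_2$. By Proposition~\ref{pro:Z5n} it is not in $\mathcal B_3$, whence $G_{250,1}\notin\mathcal B_3$ by Lemma~\ref{basic}--1).

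For $G_{250,2}\notin\mathcal B_3$ this shortcut is unavailable: every involution of $G_{250,2}$ has the form $dg$ with $g\in G_5$, conjugation by $dg$ fixes $Z(G_5)$ pointwise (as $d$ does), and every subgroup of order $25$ of $G_5$ contains $Z(G_5)$, so $G_{250,2}$ has no subgroup $\cong\mathbb Z_5^2\rtimes\mathbb Z_2$. Instead I would argue directly, in the spirit of Proposition~\ref{pro:Z5n}: the inverse-closed set $S=\{db,\,db^2,\,dc\}$ consists of three involutions, generates $G_{250,2}$ (because $\langle b,c\rangle=G_5$), and lies in the coset $G_5d$, so $\mathrm{Cay}(G_{250,2},S)$ is bipartite and isomorphic to $\mathrm{BiCay}(G_5,\varnothing,\varnothing,T)$ for a suitable $3$-element subset $T$ of $G_5$ (\cite[Lemma 8]{AT}). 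By \cite[Theorem 3.2]{bicay} its eigenvalues are the numbers $\pm$(singular values of $\sum_{t\in T}\rho(t)$), $\rho\in\mathrm{Irr}(G_5)$; evaluating at the linear character of $G_5$ that is trivial on $b$ and sends $c$ to a primitive fifth root of unity produces the eigenvalue $\pm\sqrt{4+\sqrt5}$, whose minimal polynomial $x^4-8x^2+11$ has degree $4$. Hence $\mathrm{Cay}(G_{250,2},S)$ has algebraic degree at least $4$, is not $2$-integral, and $G_{250,2}\notin\mathcal B_3$. The classification and the $G_{250,1}$ case are routine; I expect the main obstacle to be this last step --- setting up the correct bi-Cayley description of $\mathrm{Cay}(G_{250,2},S)$ and locating the character of $G_5$ that realizes the non-$2$-integral eigenvalue $\sqrt{4+\sqrt5}$.
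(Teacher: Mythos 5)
Your proposal is correct in substance but takes a genuinely different route from the paper in two places. For the classification of the non-nilpotent extensions $G_5\rtimes\mathbb{Z}_2$, the paper simply invokes GAP, whereas you give a hand proof by reducing (via Schur--Zassenhaus and the characteristicity of the Sylow $5$-subgroup) to the conjugacy classes of involutions in $\mathrm{Aut}(G_5)\cong\mathbb{Z}_5^2\rtimes\GL_2(\mathbb{F}_5)$, which are parametrized by $-I$ and $\diag(1,-1)$ since the kernel over $\GL_2(\mathbb{F}_5)$ has odd order; this is more self-contained and explains \emph{why} there are exactly two groups. For $G_{250,1}\notin\mathcal{B}_3$ you and the paper use the identical subgroup argument ($\langle a,b,d\rangle$ is the generalized dihedral group $\mathbb{Z}_5^2\rtimes\mathbb{Z}_2$, excluded by Proposition \ref{pro:Z5n} and Lemma \ref{basic}). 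For $G_{250,2}$ the paper first locates a subgroup $\mathbb{Z}_5\times D_{10}$ and exhibits the eigenvalue $\sqrt{4+\sqrt{5}}$ there, while you compute directly in $G_{250,2}$ with the three involutions $S=\{db,db^2,dc\}$, passing to $\mathrm{BiCay}(G_5,\varnothing,\varnothing,T)$ with $T$ essentially $\{b,b^2,c\}$ and evaluating the linear character of $G_5$ trivial on $b$ and sending $c\mapsto\zeta_5$, which indeed yields $|2+\zeta_5|=\sqrt{4+\sqrt{5}}$ with minimal polynomial $x^4-8x^2+11$ of degree $4$; both computations hinge on exactly the same algebraic number, so the two arguments are essentially equivalent in content, though yours avoids isolating the intermediate subgroup (note that \cite[Theorem 3.2]{bicay} is stated for abelian base groups, so you should make explicit the standard singular-value decomposition of the biadjacency matrix for the non-abelian $G_5$, as you in fact do).

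One minor slip: your claimed distinguishing invariant ``$G_{250,1}$ has $25$ involutions while $G_{250,2}$ has $9$'' is incorrect --- writing $G_5$ in Heisenberg coordinates, the condition $g^d=g^{-1}$ has exactly $25$ solutions in \emph{both} groups, so each has $25$ involutions. This does not damage the proof, since non-isomorphism already follows from your own criterion (the two involutions of $\mathrm{Aut}(G_5)$ are non-conjugate, as one inverts $Z(G_5)$ and the other centralizes it; equivalently, $Z(G_{250,2})\neq 1$ while $Z(G_{250,1})=1$), but the aside should be corrected or deleted.
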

	\begin{proof}
		By GAP \cite{GAP}, $G$ is isomorphic to $G_{250,1}$ or $G_{250,2}$. Furthermore, $G_{250,1}$ has a subgroup isomorphic to $\mathbb{Z}_5^2\rtimes \mathbb{Z}_2$ and $G_{250,2}$ has a subgroup isomorphic to $\mathbb{Z}_5\times D_{10}$.
		By Lemma \ref{basic}--1), $\mathbb{Z}_5^2\rtimes \mathbb{Z}_2\notin \mathcal{B}_3$.
		Hence $G_{250,1}\notin \mathcal{B}_3$.
		Suppose $\mathbb{Z}_5\times D_{10}\cong \langle x\rangle\times(\langle y\rangle\rtimes \langle z\rangle)\leq G_{250,2}$, where $x^5=y^5=z^2=1$.
		Let $S=\{y^4z,xy^3z,x^4y^3z\}$.
		Clearly, $S=S^{-1}$ and $\langle S\rangle\cong \mathbb{Z}_5\times D_{10}$.
		By a calculation, $\sqrt{4+\sqrt{5}}$ is an eigenvalue of $\mathrm{Cay}(\mathbb{Z}_5\times D_{10},S)$.
		Thus $\mathbb{Z}_5\times D_{10}\notin \mathcal{B}_3$.
		This implies that $G_{250,2}\notin \mathcal{B}_3$.
	\end{proof}
	 
	 Now we are ready to characterize non-nilpotent groups in $\mathcal{B}_3$.
	\begin{pro}\label{pro:not nilpotent}
		If $G$ is not nilpotent, then $G\in \mathcal{B}_3$ if and only if $G\cong D_{10}$.
	\end{pro}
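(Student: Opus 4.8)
The plan is to treat the two implications separately. The converse is immediate: $D_{10}\cong\mathbb{Z}_5\rtimes\mathbb{Z}_2$ is non-nilpotent and lies in $\mathcal{B}_3$ by Proposition \ref{pro:D10}. For the forward direction, suppose $G$ is non-nilpotent and $G\in\mathcal{B}_3$. By Lemma \ref{basic}, $|G|=2\cdot 5^{j}$ with $j\geq 1$ (the option $|G|=5^{j}$ is excluded since it would force $G$ nilpotent), the Sylow $5$-subgroup $P$ is normal of exponent $5$, and the Sylow $2$-subgroup is $\langle x\rangle\cong\mathbb{Z}_2$, so $G=P\rtimes\langle x\rangle$ with $x$ acting non-trivially on $P$. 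First I would extract a ``$(-1)$-eigenvector'': picking $g\in P$ with $g^{x}\neq g$ and setting $a:=g^{-1}g^{x}$, one checks $a\neq 1$, $o(a)=5$ and $a^{x}=a^{-1}$ (using $x^{2}=1$), whence $\langle a\rangle\rtimes\langle x\rangle\cong D_{10}\leq G$. If $P\cong\mathbb{Z}_5$ then the only non-trivial action yields $G\cong D_{10}$ and we are done, so it remains to rule out $|P|\geq 25$.

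When $|P|\geq 25$, the strategy is to exhibit inside $G$ a subgroup isomorphic either to the generalized dihedral group $\mathbb{Z}_5^2\rtimes\mathbb{Z}_2$ or to $\mathbb{Z}_5\times D_{10}$; since the former is not in $\mathcal{B}_3$ by Proposition \ref{pro:Z5n} and the latter is not in $\mathcal{B}_3$ by the eigenvalue computation in the proof of Lemma \ref{lem:G5Z2}, Lemma \ref{basic}--1) then gives a contradiction when this subgroup is proper, and $G\in\mathcal{B}_3$ is contradicted directly when it equals $G$. If $P$ is abelian, then $P\cong\mathbb{Z}_5^{n}$ with $n\geq 2$; as $2$ is invertible modulo $5$, $x$ acts diagonalizably and $P=V_{+}\oplus V_{-}$ with $V_{-}\neq 0$ its $(-1)$-eigenspace. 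If $\dim V_{-}\geq 2$, then $\langle V_{-},x\rangle$ is generalized dihedral and contains $\mathbb{Z}_5^2\rtimes\mathbb{Z}_2$; if $\dim V_{-}=1$, then $\dim V_{+}\geq 1$ and, for $0\neq u\in V_{+}$ and $0\neq v\in V_{-}$, $\langle u,v,x\rangle\cong\mathbb{Z}_5\times D_{10}$.

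The substantial case is $P$ non-abelian, which I would analyze through the $x$-invariant subgroups $Z(P)$ and $C:=C_{P}(x)$. If $x$ acts non-trivially on $Z(P)$, split on whether $x$ inverts all of $Z(P)$: if not, a fixed vector $u$ and an inverted vector $w$ in $Z(P)$ give $\langle u,w,x\rangle\cong\mathbb{Z}_5\times D_{10}$; if $x$ inverts $Z(P)$ and $|Z(P)|\geq 25$, a two-dimensional subspace of $Z(P)$ gives the generalized dihedral subgroup; and if $|Z(P)|=5$ is inverted, then $C\neq 1$ (a fixed-point-free automorphism of order $2$ forces $P$ abelian), so for $1\neq b\in C$ and the central inverted element $w\notin C$ one has $\langle b\rangle\cap\langle w\rangle=1$ and $\langle b,w,x\rangle\cong\mathbb{Z}_5\times D_{10}$. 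If $x$ acts trivially on $Z(P)$, then for $1\neq z\in Z(P)$ and the element $a$ with $a^{x}=a^{-1}$ above, $\langle z\rangle\cap\langle a\rangle=1$ and $\langle z,a,x\rangle\cong\mathbb{Z}_5\times D_{10}$. In each sub-case the exhibited subgroup has order at most $50$, whereas $|G|=2|P|\geq 250$ because the smallest non-abelian group of exponent $5$ is $G_5$ (Lemma \ref{lem:G5}), of order $125$; hence the subgroup is proper and Lemma \ref{basic}--1) closes the argument. I expect the main obstacle to be precisely this non-abelian case: identifying the correct isomorphism types of the small subgroups, verifying the intersection and centrality conditions that make the semidirect and direct decompositions valid, and invoking the coprime-action facts ($P=C_{P}(x)[P,x]$, and fixed-point-free-of-order-$2$-implies-inversion) correctly.
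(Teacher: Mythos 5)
Your proof is correct, and it reaches the conclusion by a route that differs substantially from the paper's in the non-abelian case. The skeleton is shared: reduce to $G=P\rtimes\langle x\rangle$ via Lemma \ref{basic}, dispose of the abelian case through Propositions \ref{pro:D10} and \ref{pro:Z5n} together with the $\mathbb{Z}_5\times D_{10}$ computation, and hunt for a forbidden subgroup otherwise. But for non-abelian $P$ the paper passes to a copy of $G_5$ (Lemma \ref{lem:G5}), forms $G_5\rtimes\mathbb{Z}_2$ of order $250$, and relies on a GAP classification (Lemma \ref{lem:G5Z2}) to find a forbidden subgroup in each of the two resulting groups; you instead bypass $G_5$ and the computer entirely, using the $x$-invariant subgroups $Z(P)$ and $C_P(x)$, the $\pm1$-eigenspace decomposition of the coprime action, and the classical fact that a fixed-point-free involution forces abelianness, to exhibit directly a subgroup of order $50$ isomorphic to the generalized dihedral $\mathbb{Z}_5^2\rtimes\mathbb{Z}_2$ or to $\mathbb{Z}_5\times D_{10}$; since $|P|\geq 125$ here, that subgroup is proper and Lemma \ref{basic}--1) closes the argument. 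Both routes ultimately rest on the same two eigenvalue computations (the $\sqrt{4+\sqrt{5}}$ eigenvalues appearing in Proposition \ref{pro:Z5n} and in the proof of Lemma \ref{lem:G5Z2}), which you cite correctly. What your version buys: it is computer-free; it invokes Proposition \ref{pro:Z5n} only for the genuinely dihedral action, which is the only case its proof actually treats; and it avoids the step in the paper where a copy of $G_5$ inside $P_5$ is upgraded to a subgroup of $G$ of the form $G_5\rtimes\mathbb{Z}_2$, a step that tacitly requires the involution to normalize that copy. The verifications you flag as the main obstacles (trivial intersections of the cyclic factors, centrality of the fixed elements, $C_P(x)\neq 1$ when $P$ is non-abelian) all check out.
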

	\begin{proof}
		Suppose $G$ is not nilpotent and $G\in \mathcal{B}_3$.
		By Lemma \ref{basic}, $G\cong P_5\rtimes \mathbb{Z}_2$, where $P_5$ is a $5$-group of exponent $5$.
		If $P_5$ is abelian, then $P_5\cong \mathbb{Z}_5^n$ for some $n\geq 1$.
		By Proposition \ref{pro:D10} and \ref{pro:Z5n}, $P_5=\mathbb{Z}_5$ and so $G\cong D_{10}$.
		If $P_5$ is not abelian, then $P_5$ has a minimal non-abelian subgroup isomorphic to $G_5$ by Lemma \ref{lem:G5}.
		Moreover, $G$ has a subgroup isomorphic to $G_5\rtimes \mathbb{Z}_2$.
		By Lemma \ref{lem:G5Z2}, $G_5\rtimes \mathbb{Z}_2\notin \mathcal{B}_3$, which contradicts Lemma \ref{basic}--1).
		Hence $P_5$ is abelian, and $G\cong D_{10}$. The converse direction is clear by Proposition \ref{pro:D10}. This completes the proof.
	\end{proof}
	
	By Propositions \ref{pro:nipoltentB3} and \ref{pro:not nilpotent}, we have the following result.
	\begin{thm}\label{thm:B3}
		$G\in \mathcal{B}_3$ if and only if $G\cong P_5$ or $P_5\times \mathbb{Z}_2$ or $D_{10}$, where $P_5$ is a $5$-group of exponent $5$.
	\end{thm}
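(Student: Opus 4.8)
The plan is to obtain Theorem \ref{thm:B3} as an immediate consequence of the structural results already established, splitting into the nilpotent and non-nilpotent cases. First I would fix $G \in \mathcal{B}_3$ and apply Lemma \ref{basic} to conclude that $|G| = 2^i 5^j$ with $i \in \{0,1\}$ and $j \geq 1$, that the Sylow $5$-subgroup $P_5$ of $G$ is a normal subgroup of exponent $5$, and that when $i = 1$ the Sylow $2$-subgroup is isomorphic to $\mathbb{Z}_2$. Hence either $G = P_5$, or $G = P_5 \rtimes \mathbb{Z}_2$ for some (possibly trivial) action of $\mathbb{Z}_2$ on $P_5$. This is the only place where the global arithmetic constraints on $|G|$ are invoked; everything afterward is bookkeeping.

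Next I would observe that this gives a clean dichotomy: $G$ is nilpotent precisely when $G \cong P_5$ or $G \cong P_5 \times \mathbb{Z}_2$ (the trivial action), and $G$ is non-nilpotent precisely when $i = 1$ and the action of $\mathbb{Z}_2$ on $P_5$ is nontrivial. In the nilpotent case, Proposition \ref{pro:nipoltentB3} tells us $G \cong P_5$ or $G \cong P_5 \times \mathbb{Z}_2$, and conversely that every such group lies in $\mathcal{B}_3$. In the non-nilpotent case, Proposition \ref{pro:not nilpotent} tells us $G \cong D_{10}$, and conversely that $D_{10} \in \mathcal{B}_3$ (which also follows directly from Proposition \ref{pro:D10}). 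Assembling the two cases yields exactly the list $P_5$, $P_5 \times \mathbb{Z}_2$, $D_{10}$, proving both directions simultaneously.

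Since the genuinely hard work — the eigenvalue computations exhibiting $\sqrt{4 + \sqrt{5}}$ as a spectral value for the obstructing groups $\mathbb{Z}_5^2 \rtimes \mathbb{Z}_2$ and $\mathbb{Z}_5 \times D_{10}$, the reductions through minimal non-abelian $5$-groups, and the verifications that $\mathrm{Cay}(G,S)$ is $2$-integral for the three surviving families — has already been carried out in Lemma \ref{basic} and Propositions \ref{pro:D10}, \ref{pro:nipoltentB3}, \ref{pro:not nilpotent}, there is no substantial obstacle remaining in the final theorem itself. The only point requiring (minor) care is to confirm that the nilpotent/non-nilpotent split above is genuinely exhaustive over the groups allowed by Lemma \ref{basic}; this is immediate once one notes that a group of the form $P_5 \rtimes \mathbb{Z}_2$ is nilpotent exactly when the semidirect product is direct, so that $D_{10} = \mathbb{Z}_5 \rtimes \mathbb{Z}_2$ with its natural inversion action is the unique non-nilpotent group reachable at all.
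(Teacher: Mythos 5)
Your proposal is correct and follows essentially the same route as the paper, which proves Theorem \ref{thm:B3} by simply combining Proposition \ref{pro:nipoltentB3} (the nilpotent case) with Proposition \ref{pro:not nilpotent} (the non-nilpotent case). The extra remarks you make about Lemma \ref{basic} and the exhaustiveness of the nilpotent/non-nilpotent dichotomy are sound but already implicit in those two propositions.
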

	
	\textbf{Acknowledgments.}
	Research of this paper was carried out while the second author was visiting Beijing Jiaotong University. He expresses his sincere thanks to the 111 Project of China (grant number B16002) for financial support and to the School of Mathematics and Statistics of Beijing Jiaotong University for their kind hospitality. Also he was partially supported by the Shahid Rajaee Teacher Training University. Tao Feng is supported by NSFC under Grants 11871095 and 12271023. Shixin Wang is supported by research program P1-0285 and research project J1-50000 by the Slovenian Research and Innovation Agency.

\end{document}